\newcommand\skiplines[1]{\vspace{#1\baselineskip}}
\newtheorem{thm}{Theorem}[section]
\newtheorem{lem}{Lemma}[section]
\newtheorem{cor}{Corollary}[section]
\newtheorem{prp}{Proposition}[section]
\newcommand\Dotfill{\leavevmode\cleaders\hb@xt@ .83em{\hss .\hss}\hfill \kern \z@}
\titleformat{\chapter}[display]
{\normalfont\normalsize\normalfont\centering}{\centering\chaptertitlename\ \thechapter}{12pt}{\normalsize}
\titlespacing*{\chapter} {0pt}{50pt}{40pt}
\setlist[description]{font=\normalfont}
\begin{document}
  \pagenumbering{gobble}
  
  \begin{titlepage}
  \begin{center}

  \vspace*{2\baselineskip}
 Boundary behavior of analytic functions and\\
 \skiplines{1}
  Approximation Theory \skiplines{3}
  
  by \skiplines{3}
  
  Spyros Pasias \skiplines{4}
  
  A thesis submitted in partial fulfillment\\ of the requirements for the degree of\\
Doctor of Philosophy\\
  with a concentration in Pure Mathematics\\
  Department of Mathematics and Statistics\\
  College of Arts and Sciences\\
  University of South Florida \skiplines{3}
  
  Major Professor: Arthur Danielyan, Ph.D.\\
  Chris Ferekides, Ph.D.\\
  Sherwin Kouchekian, Ph.D.\\
  Boris Shekhtman, Ph.D.\skiplines{2}
  
  Date of Approval:\\
  June 27, 2022 \skiplines{3}
  
  Keywords: Fatou's Theorem, Blaschke product, Radial limits, Baire's Theorem, Arakeljan set, G-holes.\skiplines{1}
  
  Copyright © 2022, Spyros Pasias
  
\end{center}

  \end{titlepage}

\baselineskip 20pt
\chapterfont{\centering}
\chapternumberfont{\normalsize}
\chaptertitlefont{\normalsize}
\titleformat{\section}
  {\normalfont\fontsize{12}{15}\normalfont}{\thesection}{1em}{}

\pagebreak

\begin{center}
\vspace*{2\baselineskip}
{Acknowledgments}\\
\skiplines{2}

\end{center}
\pagestyle{empty}

 Firstly, I would like to express my deep appreciation to my Doctorate advisor Dr. Arthur Danielyan for supervising my research in the study of analytic functions and complex approximation theory, as well as for his constant encouragement and continuous support.\\
In addition, I would like to sincerely thank all of my family, and the rest of my committee members Dr. Chris Ferekides, Dr. Sherwin Kouchekian and Dr. Boris Shekhtman who have been inspiring mentors and extremely helpful in multiple occasions during my studies at the University of South Florida.\\
   \indent Finally, I would like to express my gratitude to Dr. Dmytro Savchuk for all his assistance and valuable advise during my journey.

\pagebreak
\pagenumbering{roman}
\begin{center}

\tableofcontents

\end{center}

\pagebreak

\chapter*{Abstract}

\addcontentsline{toc}{chapter}{Abstract}
  \indent    \indent In this Thesis we deal with problems regarding boundary behavior of analytic functions and approximation theory. We will begin by characterizing the set in which Blaschke products fail to have radial limits but have unrestricted limits on its complement.\\
 \indent We will then proceed and solve several cases of an open problem posed in \cite{Da}. The goal of the problem is to unify two known theorems to create a stronger theorem; in particular we want to find necessary and sufficient conditions on sets $E_1\subset E_2$ of the unit circle such that there exists a bounded analytic function that fails to have radial limits exactly on $E_1$, but has unrestricted limits exactly on the complement of $E_2$. One of the several cases extends the main theorem proven by Peter Colwell found in [10] regarding boundary behavior of Blaschke products.\\
 \indent Additionally, we will provide a shorter proof for the necessity part of the main result in [10] which relies on a classical result proven by R. Baire. The sufficiency part of that result will then be used to shorten another proof by A.J Lohwater and G. Piranian found in [23].\\
 \indent Lastly, we will provide an extension of a well known theorem of Arakeljan about approximating continuous functions which are analytic in the interior of a closed set, by functions analytic in a larger domain.

\chapter*{Introduction}
\indent \indent The purpose of this Thesis is to examine various topics in the theory of analytic functions and approximation theory, as well as to provide some original research results that have been obtained. This manuscript is self contained, as such it may be presented to a vast audience of mathematicians. \\
\indent In Chapter $1$ we recall some fundamental concepts that will also be used in later Chapters. In particular, various material from classical books in approximation theory will be covered. Thus, the first Chapter provides knowledge on some of the fundamental concepts in complex analysis and serves as a bridge to the following Chapters where original research is to be found. \\
\indent In the following Chapter we will begin by examining a specific class of bounded analytic functions. Namely we will investigate Blaschke products; a special class of bounded analytic functions bounded by modulus by $1$. Blaschke products are an important class of functions in analytic function theory, both because of their involvement in factorization theorems and their utility for the construction of examples. We will proceed by solving a characterization theorem regarding the boundary behavior of Blaschke products. The techniques used for that result involve deep theorems about the boundary behavior of Blaschke products. In the following section we will solve several special cases of an open problem attempting to unify two known theorems regarding boundary behavior of bounded analytic functions.\\ 
\indent In the following chapter we will see how the necessity of a theorem proven in \cite{PE} regarding the boundary behavior of Blaschke products can be proven in the basis of Baire's Theorem about functions of the Baire first class. Moreover, we will see how the theorem from \cite{PE} can be utilized to simplify a proof of a theorem found in \cite{PI}.\\
\indent In the final Chapter we extend a classical theorem in approximation theory, namely Arakeljan's theorem. The methodology of the proofs in this chapter follow the methods introduced in \cite{RU}. An alternative viewpoint for the description of Arakeljan sets will be introduced. This new description will be the motivation, in which the proofs in that chapter are based on.

\pagenumbering{arabic}
\pagestyle{plain}
\chapter{Preliminaries}
\section{Measure and Integration}
 \indent \indent In this section we will recall some basic background material on measure and integration theory following the presentation in \cite{HO}.\vspace{0.12in}

 To this end, let $X$ be a set and $P(X)$ be the collection of all subsets of $X$. Then $P(X)$ is a ring under the following operations $(P(X),+,*)$ \begin{align*}
&A+B=(A\cup B) \setminus (A\cap B)\\
&A*B=A\cap B
\end{align*}

\noindent\textbf{Definition 1.1.1.}
 A subring of the ring $P(X)$ is called a $\sigma-$ring of $X$ if it is closed under countable union.\vspace{0.12in}
 
\noindent\textbf{Definition 1.1.2.}
Let $I$ be a closed interval. The Borel subsets of $I$ are the members of the smallest $\sigma-$ring of subsets of $I$ which contains all half open intervals $[a,b)\subset I$.\vspace{0.12in}

\noindent\textbf{Definition 1.1.3.}
A finite positive Borel measure on $I$ is a function $\mu:P(I)\rightarrow \mathbb{R}_{>0}$ such that for any countable collection of pairwise disjoint Borel subsets $\lbrace A_i\rbrace$ $$\mu(\bigcup_{n=1}^{\infty} A_n)=\sum_{n=1}^\infty \mu(A_n).$$

 For a closed interval $I$ every bounded variation function $F$ continuous from the left corresponds to a finite measure $\mu$. Indeed, let $[a,b)\subset I$ be a semi-closed interval, we define $$\mu([a,b))=F(b)-F(a).$$ 
\noindent Then $\mu$ has a unique extension to a Borel measure on $I$.\vspace{0.12in}

\noindent\textbf{Remark 1.1.1.}
In $I$ every Borel measure in constructed from such an increasing function $F$. Moreover, the increasing function $F(x)=x$ induces the Lebesgue measure on $I$.\vspace{0.12in}

\noindent\textbf{Remark 1.1.2.}
A finite signed measure $\mu$ on $I$ is defined to be a measure $\mu=\mu_1-\mu_2$, where $\mu_1$ and $\mu_2$ are finite positive Borel measures on $I$. Similarly, a complex Borel measure $\mu$ is a measure $\mu=\mu_1+i\mu_2$, where $\mu_1$ and $\mu_2$ are finite Borel measures.\vspace{0.12in}

\noindent\textbf{Definition 1.1.4.}
A simple function for $\mu$ is a complex valued function $f:I\rightarrow \mathbb{C}$ of the form $$f(x)=\sum_{k=1}^n a_k\chi_{E_k}(x)$$ such that:\begin{enumerate}
\item[i.] $a_1,a_2,...,a_n$ are complex numbers
\item[ii.] The sets $E_k$ are disjoint Borel subsets of $I$ of finite $\mu$ measure
\item[iii.] The function $\chi_{E_k}(x)$ denotes the characteristic function on $E_k$ 
\end{enumerate}
 For such simple functions we define $$\int_I f d\mu=\sum_{k=1}^n a_k\chi_{E_k}$$

\noindent\textbf{Remark 1.1.3.}
Clearly if $f$ is a simple function then so is $|f|$. Moreover, by the triangle inequality it is evident that $$\left|\int_I f d\mu\right|\leq\int_I \left| f\right| d\mu$$ 

\noindent\textbf{Definition 1.1.5.}
A Borel function $f:I\rightarrow \mathbb{C}$ is integrable with respect to $\mu$ if there exist a sequence $\lbrace f_n\rbrace$ such that:
\begin{enumerate}
\item[i.] Each function $f_n$ is a simple Borel function.
\item[ii.] $\lim_{m,n\rightarrow\infty}\int_I|f_n-f_m|=0.$
\item[iii.] The sequence $\lbrace f_n\rbrace$ converges in measure to $f$, that is for each $\epsilon>0$ $$\lim_{n\rightarrow\infty}\mu(\lbrace x\in I:|f_n-f|(x)\geq\epsilon\rbrace)=0.$$
\end{enumerate} 

 For every sequence of simple functions $\lbrace f_n\rbrace$ satisfying conditions $(i-iii)$ we define the integral of $f$ as: $$\int_I f d\mu=\lim_{n\rightarrow\infty}\int_I f_n d\mu.$$

\noindent\textbf{Remark 1.1.4.}
The integral of $f$ is well defined. That is, for any such sequence $\lbrace f_n\rbrace_{n=1}^\infty$ we have $\lim_{n\rightarrow\infty} \int_If_n d\mu<\infty$. Moreover, if we were to pick another sequence of simple functions $\lbrace g_n\rbrace$ satisfying conditions $(i-iii)$, then $$\int_I f d\mu=\lim_{n\rightarrow\infty}\int_I f_n d\mu=\lim_{n\rightarrow\infty}\int_I g_n d\mu.$$ 

 We denote the class of integrable functions with respect to a measure $\mu$ by $L^1(d\mu)$. We note that $f\in L^1(d\mu)$ if and only if $|f|\in L^1(d\mu)$. This is a clear difference between Riemann and Lebesgue integral. Similarly for $p>0$ we say that $f\in L^p(d\mu)$ if and only if $|f|^p$ is integrable with respect to $\mu$.\vspace{0.12in}

\noindent\textbf{Definition 1.1.6.}
A subset $S$ of $I$ is called of $\mu-$measure zero if for all $\epsilon>0$ there exists a Borel set $B\supset S$, such that $\mu(B)=0$.\vspace{0.12in}

Any phenomenon which occurs everywhere except on a set of $\mu-$measure zero is said to happen almost everywhere relative to $\mu$.\vspace{0.12in}

The following theorem is fundamental in measure and integration theory. It provides us with a great tool for determining whether a function $f$ is integrable.

\begin{thm}[Lebesgue Dominated Convergence Theroem]
If $\lbrace f_n\rbrace$ is a sequence of integrable functions such that the limit $\lim_{n\rightarrow\infty} f_n(x)=f(x)$ exists almost everywhere, and if there exists a $g\in L_1(d\mu)$ such that $|f_n|\leq g$ for all $n$, then $f$ is integrable and $$\int_If d\mu=\lim_{n\rightarrow\infty}\int_I f_n d\mu.$$
\end{thm}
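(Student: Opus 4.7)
The plan is to reduce the statement to a Fatou-type inequality applied to a cleverly chosen non-negative auxiliary sequence. First, I would verify that $f$ is itself integrable with respect to $\mu$ in the sense of Definition 1.1.5. Since each $f_n$ is Borel measurable and $f_n \to f$ almost everywhere, $f$ agrees off a $\mu$-null set with a Borel function, and passing to the limit in $|f_n|\le g$ yields $|f|\le g$ a.e.; together these imply $f\in L^1(d\mu)$.

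The main step is to show that $\int_I |f_n-f|\,d\mu \to 0$, since then the conclusion $\int_I f_n\,d\mu \to \int_I f\,d\mu$ follows from Remark 1.1.3. I would introduce the auxiliary sequence
$$h_n \;=\; 2g - |f_n-f|,$$
which is well defined and non-negative almost everywhere because $|f_n-f|\le |f_n|+|f|\le 2g$ a.e. By hypothesis $h_n \to 2g$ a.e., so Fatou's Lemma (applied to the non-negative functions $h_n$) gives
$$\int_I 2g\,d\mu \;=\; \int_I \liminf_{n\to\infty} h_n\,d\mu \;\le\; \liminf_{n\to\infty}\int_I h_n\,d\mu \;=\; \int_I 2g\,d\mu \;-\; \limsup_{n\to\infty}\int_I |f_n-f|\,d\mu.$$
Since $\int_I 2g\,d\mu<\infty$, it can be cancelled from both sides, yielding $\limsup_n \int_I|f_n-f|\,d\mu\le 0$, which is what we wanted.

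The principal obstacle is therefore not the argument above, but the prerequisite of Fatou's Lemma (and, behind it, the Monotone Convergence Theorem), which has to be built from the simple-function/convergence-in-measure definition of integrability given in Definition 1.1.5. Once those two pillars are in place, the two-line Fatou trick finishes the theorem cleanly; if one instead tries to proceed directly from Definition 1.1.5, one has to manufacture a Cauchy-in-$L^1$ sequence of simple functions converging in measure to $f$ out of the given simple approximants to the $f_n$'s, and the diagonal extraction and uniform control needed there is substantially more technical than the Fatou approach I have outlined.
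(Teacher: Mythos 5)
The paper states this theorem as classical background (following Hoffman) and gives no proof of it at all, so there is nothing to compare your argument against on the paper's side. Your proposal is the standard and correct proof: the reduction to $\int_I|f_n-f|\,d\mu\to 0$, the auxiliary sequence $h_n=2g-|f_n-f|\ge 0$ a.e., and the application of Fatou's Lemma with cancellation of the finite quantity $\int_I 2g\,d\mu$ all go through. Two small points worth keeping in mind if this were to be written out in full within the paper's framework: first, Remark~1.1.3 states the inequality $\left|\int_I f\,d\mu\right|\le\int_I|f|\,d\mu$ only for simple functions, so you would need the one-line extension to general integrable functions via Definition~1.1.5 before invoking it; second, as you yourself note, Fatou's Lemma (and the Monotone Convergence Theorem behind it) is not available in the paper and would have to be derived from the simple-function, convergence-in-measure definition of the integral, which is where essentially all of the technical work lives. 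Your honesty about that dependency is appropriate; with Fatou granted, the argument is complete.
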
 

 Another fundamental result in Integration theory is \emph{H\"{o}lder's inequality}. It provides a useful tool for determining integrability and also for finding upper bounds of integrals.

\begin{thm}[H\"{o}lder's inequality]
Let $f\in L^p(d\mu)$ and $g\in L^q(d\mu)$ where $\frac{1}{p}+\frac{1}{q}=1$. Then the function $fg\in L_1(d\mu)$ and $$\int_I |fg| d\mu\leq(\int_I|f|^p d\mu)^\frac{1}{p}(\int_I|g|^q d\mu)^\frac{1}{q}.$$
\end{thm}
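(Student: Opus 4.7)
The plan is to reduce H\"{o}lder's inequality to a pointwise numerical estimate known as Young's inequality: for nonnegative reals $a,b$ and conjugate exponents $p,q>1$ with $\frac{1}{p}+\frac{1}{q}=1$, one has
$$ab \leq \frac{a^p}{p} + \frac{b^q}{q}.$$
Once this pointwise fact is in hand, the full integral inequality follows after a normalization step and term-by-term integration.

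To establish Young's inequality, I would exploit the concavity of the natural logarithm on $(0,\infty)$. Assuming $a,b>0$ (the case where one is zero is trivial), the concavity of $\log$ applied to the convex combination $\frac{1}{p}(a^p) + \frac{1}{q}(b^q)$ gives
$$\log\!\left(\frac{a^p}{p} + \frac{b^q}{q}\right) \geq \frac{1}{p}\log(a^p) + \frac{1}{q}\log(b^q) = \log a + \log b = \log(ab),$$
and exponentiating yields the claim. Alternatively, one can fix $b$ and locate the minimum of $t \mapsto t^p/p + b^q/q - tb$ on $[0,\infty)$ via calculus, obtaining the same bound.

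With Young's inequality available, I would next dispose of degenerate cases. If $\bigl(\int_I |f|^p d\mu\bigr)^{1/p} = 0$ then $|f|^p = 0$ almost everywhere relative to $\mu$, so $fg = 0$ a.e.\ and both sides of the inequality vanish; similarly if the $L^q$ factor is zero. Otherwise set
$$F(x) = \frac{|f(x)|}{\left(\int_I |f|^p d\mu\right)^{1/p}}, \qquad G(x) = \frac{|g(x)|}{\left(\int_I |g|^q d\mu\right)^{1/q}},$$
so that $\int_I F^p\,d\mu = \int_I G^q\,d\mu = 1$. Applying Young's inequality pointwise yields $FG \leq F^p/p + G^q/q$, and integrating (using monotonicity of the integral, with the dominating function already known to be integrable) gives $\int_I FG\,d\mu \leq \frac{1}{p}+\frac{1}{q} = 1$. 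Multiplying through by the two normalization constants recovers the stated inequality, and the finiteness of the right-hand side shows that $|fg|$ is integrable, hence $fg \in L^1(d\mu)$.

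The main obstacle is really the pointwise Young's inequality; everything after it is routine bookkeeping about normalization, linearity, and monotonicity of the integral as developed earlier in this section. A minor technical point to verify is the measurability of the product $fg$ so that the integral on the left makes sense, but this is immediate since products of Borel functions are Borel.
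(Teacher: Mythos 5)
Your proof is correct: the reduction to the pointwise Young inequality $ab\leq a^p/p+b^q/q$ via concavity of $\log$, the handling of the degenerate cases, and the normalization-and-integration step together give a complete argument, including the integrability of $fg$ by comparison with the integrable majorant. The paper states H\"{o}lder's inequality only as a background result and supplies no proof of its own, so there is nothing to compare against; the route you take is the standard one and nothing is missing.
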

\section{Approximate Identities and the Poisson Kernel}
\indent \indent In this section we will look at a clever construction that has many uses in Approximation Theory. Namely we will look at approximate identities. But before we define approximate identities we recall some basic Functional Analysis.\vspace{0.12in}

\noindent\textbf{Definition 1.2.1.}
Let $X$ be a real or complex vector space. A norm on $X$ is a positive real function denoted by $\parallel...\parallel$ which has the following properties \begin{enumerate}
\item[i.] $\parallel x\parallel\geq0$; and $x=0$ if and only if $x=0$.
\item[ii.]$\parallel x+y\parallel\leq\parallel x\parallel+\parallel y\parallel$\   \ (Triangle Inequality).
\item[iii.]$\parallel \lambda x\parallel=|\lambda|\parallel x\parallel$.
\end{enumerate}

 \noindent\textbf{Definition 1.2.2.}
 A complex or real vector space $X$ endowed with a norm function $\parallel...\parallel$ becomes a metric space under the distance function $$\rho(x,y)=\parallel x-y\parallel$$ If this metric space is complete it is called a \emph{Banach space}.\vspace{0.12in}
 
 \noindent\textbf{Definition 1.2.3.} Let $I=[a,b]$ be a closed interval and let $\mu$ be a Borel measure on $I$. We define the class $L^p$ to be the equivalence class $(\sim)$ of functions $f$ such that $|f|^p$ is integrable. Moreover, we require $f\sim g$ if and only if $f-g=0$ almost everywhere. Here, we also want to include the special case $p=\infty$. In the case of $L^\infty$ the norm is defined as follows $$\parallel f \parallel_\infty\equiv\inf_x\lbrace C\geq 0: |f(x)|\leq C \text{ for almost all }x\rbrace.$$
 Just like $L^p$ we are identifying two functions in $L^\infty$ if they agree almost everywhere.\vspace{0.12in}
 
 The space $L^p$ is endowed with the $L^p$ norm $\parallel f \parallel_p=(\int_a^b|f|^p)^\frac{1}{p}$. One can check this is indeed a norm by verifying that the conditions of {Definition 1.2.1} are satisfied. Under the $L^p$ norm our space $L^p$ it is a complete metric space, hence a Banach space. That is for every sequence of functions $\lbrace f_n \rbrace_{n=1}^\infty$ of functions satisfying the Cauchy condition $$\lim_{m,n\rightarrow\infty}\parallel f_n-f_m\parallel_p=0.$$
There is an $f$ in $L^p$ such that $\parallel f-f_n\parallel_p\rightarrow 0$.
Similarly the space $L^\infty$ is a complete norm space as well, hence a Banach space.\vspace{0.12in}

 Now we note the functions $f_n$ do not necessarily converge pointwise to $f$. However, by the following theorem and a theorem of F. Riesz there is always a subsequence which converges almost everywhere. 

\begin{thm}
If $\lbrace f_n\rbrace\subset L^p$ is a sequence defined on $[a,b]$ that converges to a function $f(x)$ in the $L^p$ norm, then it converges to $f(x)$ in measure.
\end{thm}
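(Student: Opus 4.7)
The plan is to prove this by a direct application of Chebyshev's inequality (sometimes called Markov's inequality in this form) to translate control of the $L^p$ norm of $f_n - f$ into control of the measure of the set where $f_n$ and $f$ differ significantly. Fix $\epsilon > 0$ and, for each $n$, set $E_n = \lbrace x \in [a,b] : |f_n(x) - f(x)| \geq \epsilon \rbrace$; the task reduces to showing that $\mu(E_n) \to 0$ as $n \to \infty$.

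On the set $E_n$ the integrand $|f_n - f|^p$ is pointwise bounded below by $\epsilon^p$, so splitting the integral and discarding the contribution from the complement yields
\[
\epsilon^p \mu(E_n) \leq \int_{E_n} |f_n - f|^p \, d\mu \leq \int_a^b |f_n - f|^p \, d\mu = \parallel f_n - f \parallel_p^p.
\]
Rearranging gives $\mu(E_n) \leq \parallel f_n - f \parallel_p^p / \epsilon^p$, and since the hypothesis of $L^p$ convergence supplies $\parallel f_n - f \parallel_p \to 0$, we conclude $\mu(E_n) \to 0$, which is exactly the definition of convergence in measure. Because $\epsilon > 0$ was arbitrary, the proof is complete for $1 \leq p < \infty$.

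The case $p = \infty$ is handled separately, but it is immediate from the definition of $\parallel \cdot \parallel_\infty$: if $\parallel f_n - f \parallel_\infty \to 0$, then for every $\epsilon > 0$ and all sufficiently large $n$ one has $\parallel f_n - f \parallel_\infty < \epsilon$, so that $|f_n - f| < \epsilon$ almost everywhere, forcing $\mu(E_n) = 0$ eventually. There is no substantive obstacle in this argument; the only minor subtlety is the bookkeeping of restricting the integral to $E_n$ before applying the elementary lower bound, and of treating the $p = \infty$ case as its own (easier) sub-argument rather than folding it into the main estimate.
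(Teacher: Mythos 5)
Your proof is correct and follows essentially the same route as the paper: a Chebyshev-type estimate bounding $\epsilon^p\,\mu(E_n)$ by $\parallel f_n-f\parallel_p^p$ and letting $n\to\infty$. In fact your version is slightly more careful, since the paper's displayed inequality uses the exponent $2$ rather than the general $p$, and you also dispose of the $p=\infty$ case explicitly.
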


\begin{proof}
Let $\epsilon>0$ be arbitrary and let $$E_n(\epsilon)\equiv\lbrace x\in[a,b]:|f_n(x)-f(x)|\geq\epsilon\rbrace.$$
Then $$\int_a^b(f_n-f)^2dx\geq \int_{E_n(\epsilon)}(f_n-f)^2dx\geq\epsilon^2m(E_n(\epsilon)).$$
Since $\epsilon$ is fixed, $$m(E_n(\epsilon))\rightarrow 0$$
\end{proof}

\begin{thm}[F. Riesz] Let $\lbrace f_n(x)\rbrace$ be a sequence of functions which converges in measure to the function $f(x)$. Then there exists a subsequence which converges to the function $f(x)$ almost everywhere.
\end{thm}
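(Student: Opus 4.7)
The plan is to use the hypothesis of convergence in measure to extract a subsequence along which the ``bad'' sets shrink geometrically, and then to run a Borel--Cantelli type argument. First, I would apply the definition of convergence in measure with tolerance $\epsilon = 2^{-k}$: since $\mu(\{x : |f_n(x) - f(x)| \geq 2^{-k}\}) \to 0$ as $n \to \infty$, I can recursively pick indices $n_1 < n_2 < \cdots$ so that the Borel sets
$$E_k := \{x \in [a,b] : |f_{n_k}(x) - f(x)| \geq 2^{-k}\}$$
satisfy $\mu(E_k) < 2^{-k}$.

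Next, I would form the limsup set $E := \bigcap_{N=1}^{\infty} \bigcup_{k \geq N} E_k$, consisting of those points that belong to infinitely many of the $E_k$. Countable subadditivity combined with the geometric tail bound gives
$$\mu(E) \leq \mu\Bigl(\bigcup_{k \geq N} E_k\Bigr) \leq \sum_{k=N}^{\infty} 2^{-k} = 2^{-N+1}$$
for every $N$, so letting $N \to \infty$ yields $\mu(E) = 0$.

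For the convergence claim, I would verify pointwise convergence off the null set $E$. If $x \notin E$, then $x \notin E_k$ for all sufficiently large $k$, so $|f_{n_k}(x) - f(x)| < 2^{-k}$ eventually, forcing $f_{n_k}(x) \to f(x)$. The only real design choice is the summable rate $2^{-k}$ — any tolerances $\epsilon_k$ with $\sum \epsilon_k < \infty$ would work equally well — and I do not anticipate a serious obstacle, since the argument is essentially a hand-rolled first Borel--Cantelli lemma specialized to the sets $E_k$.
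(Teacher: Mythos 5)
Your argument is correct and complete: extracting a subsequence with $\mu(E_k)<2^{-k}$, bounding the limsup set by the geometric tail, and reading off pointwise convergence outside that null set is exactly the classical Riesz argument. The paper does not supply its own proof here --- it only cites Natanson --- and the proof in that reference is essentially the one you have written, so there is nothing to reconcile.
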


\begin{proof}
The proof can be found in (\cite{NA}, p. 98).
\end{proof}

\begin{cor} If the sequence $\lbrace f_n(x)\rbrace$ converges to $f(x)$ in the $L^p$ norm, then there exists a subsequence $\lbrace f_{n_k}(x)\rbrace$ which converges to $f(x)$ almost everywhere.
\end{cor}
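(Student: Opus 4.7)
The plan is to combine the two preceding theorems in a single chain: first I would invoke Theorem 1.2.1 to convert the hypothesis of $L^p$-convergence into convergence in measure, and then I would feed the resulting sequence into the F.\ Riesz theorem (Theorem 1.2.2) to extract the desired almost-everywhere convergent subsequence.

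In more detail, starting from the assumption that $\lVert f_n - f\rVert_p \to 0$, Theorem 1.2.1 applied to the sequence $\{f_n\}$ immediately yields convergence in measure, i.e.\ for every $\epsilon>0$,
\[
m\bigl(\{x\in[a,b] : |f_n(x)-f(x)|\geq \epsilon\}\bigr)\longrightarrow 0.
\]
At this point the hypotheses of Theorem 1.2.2 are exactly met, so there exists a subsequence $\{f_{n_k}\}$ converging to $f$ almost everywhere on $[a,b]$, which is the conclusion sought.

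There is no real obstacle here; the statement is a direct corollary whose only content is the composition of the two results stated just above it. The only mild care needed is to make sure Theorem 1.2.1 applies in the full generality claimed (the proof given there is written for $p=2$ but the same square-out/Chebyshev-type estimate goes through for arbitrary $p$, since on the set $E_n(\epsilon)$ one has $|f_n-f|^p \geq \epsilon^p$, giving $\epsilon^p\, m(E_n(\epsilon))\leq \lVert f_n-f\rVert_p^p \to 0$). Beyond noting this, nothing further is required, so the proof will be only a couple of lines.
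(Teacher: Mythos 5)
Your proposal is correct and is exactly the argument the paper intends: the corollary is the composition of Theorem 1.2.1 ($L^p$-convergence implies convergence in measure) with the F.~Riesz theorem. Your remark that the printed proof of Theorem 1.2.1 is written for $p=2$ but the Chebyshev-type estimate $\epsilon^p\, m(E_n(\epsilon)) \leq \lVert f_n - f \rVert_p^p$ handles general $p$ is a sensible and accurate clarification.
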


Now let $X$ be a Banach space and consider the space $X^*$ of all linear functionals $F$ on $X$ which are continuous, that is: $$\parallel x_n-x\parallel\rightarrow 0 \Rightarrow\parallel F(x_n)-F(x)\parallel\rightarrow 0$$
Then there is a natural norm on $X^*$ which is defined as follows $$\parallel F\parallel=\sup_{\parallel x\parallel\leq 1}|F(x)|$$
With this norm $X^*$ becomes a Banach space, the conjugate space of $X$. Suppose $1\leq p<\infty$, a standard result in Functional Analysis is that the conjugate space of $L^p$ is $L^q$ where $\frac{1}{p}+\frac{1}{q}=1$. If $g\in L^q$ then it induces a linear functional $F$ on $L^p$ defined as follows: $$F(f)=\int_a^b fg dx$$
Every continuous linear functional on $L^p$ has this form, and $$\parallel F\parallel=\parallel g\parallel_q$$
The conjugate space of $L^\infty$ contains $L^1$; but except in trivial cases is larger than $L^1$.\vspace{0.12in}

\noindent\textbf{Definition 1.2.4.}
Let $H$ be a real or complex vector space. An inner product on $H$ is a function $\langle\cdot,\cdot\rangle$ which assigns to each ordered pair of vectors in $H$ a scalar, in such a way that \begin{enumerate}
\item[i.] $\langle x_1+x_2,y\rangle=\langle x_1,y\rangle + \langle x_2,y\rangle$.
\item[ii.] $\langle\lambda x,y\rangle=\lambda\langle x,y\rangle$.
\item[iii.] $\langle x,y\rangle=\overline{\langle y,x\rangle}$.
\item[iv.] $\langle x,x\rangle\geq 0; \langle x,x\rangle=0 \text{ iff }x=0$.
\end{enumerate}

 Such a space $H$, together with a specified inner product on $H$ is called an inner product space. In any inner product space one has the Cauchy-Schwartz inequality: $$ |(x,y)|^2\leq \langle x,x\rangle \langle y,y\rangle$$
This inequality is evident if $y=0$. If $y\not=0$, the inequality easily follows from\\
 $0\leq \langle x+\lambda y,x+\lambda y\rangle$ where $\lambda$ is the scalar $$\lambda=-\frac{\langle x,y\rangle}{\langle y,y\rangle}.$$
\indent From the Cauchy-Schwartz one can easily show that $\parallel x\parallel=\langle x,x\rangle^{\frac{1}{2}}$ is a norm on $H$. If $H$ is complete under this norm we say that $H$ is a \emph{Hilbert Space}. Thus, a Hilbert space is a Banach space in which the norm is induced by an inner product.\vspace{0.12in}

\noindent\textbf{Example 1.2.1.}
The space $L^2[a,b]$ with the inner product $\langle f,g\rangle=\int_a^bf\overline{g}dx$ is a Hilbert space.\vspace{0.12in}

\noindent\textbf{Definition 1.2.5.}
An approximate identity is a family of real positive functions $\lbrace k_\lambda\rbrace_{\lambda\in \Lambda\subset\mathbb{R}}$ in $L_1[-\pi,\pi]$ such that:
\begin{enumerate}
\item $\frac{1}{2\pi}\int_{-\pi}^{\pi} k_\lambda d x=1$
\item Let $I$ be any open interval about $x=0$ and $\lambda_0=\sup\lbrace \lambda:\lambda\in \Lambda\subset\mathbb{R}\rbrace$, then $$\lim_{\lambda\rightarrow\lambda_0}\sup_{x\notin I}|k_\lambda(x)|=0.$$
\end{enumerate}

 \indent Now we will look a theorem demonstrating the role of approximate identities in the theory of analytic functions.

\begin{thm}
Let $\lbrace k_\lambda\rbrace_{\lambda\in \Lambda}$ be an approximate identity. Then for all $f\in L_p[-\pi,\pi]$, $p>1$ we have: $$\lim_{\lambda\rightarrow\lambda_0} |f-\sigma_\lambda|=0$$
Where $$\sigma_\lambda (x)=\int_{-\pi}^{\pi}f(x-t)k_{\lambda}(t)dt$$ denotes the convolution of a $2\pi$ periodic extension of $f$ with $k_\lambda$. Moreover if $f$ is continuous with $f(-\pi)=f(\pi)$ then $f*k_\lambda$ converges uniformly to $f$.
\end{thm}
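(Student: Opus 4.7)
The plan is to exploit the normalization $\tfrac{1}{2\pi}\int_{-\pi}^{\pi} k_\lambda\,dt = 1$ to recast the error $f-\sigma_\lambda$ as an integral of a translation-difference of $f$ against the kernel, and then to split the integration domain into a small neighborhood of $0$ (where the translation-difference of $f$ is small) and its complement (where the kernel is small by condition (ii) of Definition 1.2.5).

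First I would rewrite, using (i) of Definition 1.2.5 (and absorbing the $\tfrac{1}{2\pi}$ into the definition of $\sigma_\lambda$),
\[
f(x) - \sigma_\lambda(x) \;=\; \frac{1}{2\pi}\int_{-\pi}^{\pi} [f(x)-f(x-t)]\,k_\lambda(t)\,dt.
\]
To extract a pointwise $p$-th power estimate I would factor $k_\lambda = k_\lambda^{1/p}\cdot k_\lambda^{1/q}$ with $1/p+1/q=1$ and apply H\"older's inequality (Theorem 1.1.2), whence, since $p/q=p-1$, the constants simplify to
\[
|f(x)-\sigma_\lambda(x)|^p \;\le\; \frac{1}{2\pi}\int_{-\pi}^{\pi} |f(x)-f(x-t)|^p\,k_\lambda(t)\,dt.
\]
Integrating in $x$ and swapping the order of integration (Fubini, justified by positivity) gives
\[
\|f-\sigma_\lambda\|_p^p \;\le\; \frac{1}{2\pi}\int_{-\pi}^{\pi} k_\lambda(t)\,\|f-f(\cdot-t)\|_p^p\,dt.
\]

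The central input is then the $L^p$-continuity of translations: the map $t\mapsto \|f-f(\cdot-t)\|_p$ is continuous at $0$ with value $0$, obtained by approximating $f$ in $L^p$ by continuous $2\pi$-periodic functions (for which the statement follows from uniform continuity on $[-\pi,\pi]$). Given $\varepsilon>0$, I would pick $\delta>0$ so that $\|f-f(\cdot-t)\|_p^p<\varepsilon$ whenever $|t|<\delta$, and set $I=(-\delta,\delta)$. On $I$ the normalization of $k_\lambda$ bounds the contribution by $\varepsilon$; on $[-\pi,\pi]\setminus I$ the triangle inequality and translation-invariance of Lebesgue measure give $\|f-f(\cdot-t)\|_p\le 2\|f\|_p$, so the contribution is at most $2^p\|f\|_p^p\cdot\sup_{t\notin I}k_\lambda(t)$, which tends to $0$ as $\lambda\to\lambda_0$ by condition (ii) of Definition 1.2.5. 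Hence $\limsup_{\lambda\to\lambda_0}\|f-\sigma_\lambda\|_p^p \le \varepsilon$ for every $\varepsilon>0$, proving the $L^p$ statement.

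For the uniform convergence, when $f$ is continuous with $f(-\pi)=f(\pi)$ its $2\pi$-periodic extension is uniformly continuous on $\mathbb{R}$, so the modulus $\omega_f(\delta):=\sup_{|t|<\delta}\sup_x|f(x)-f(x-t)|$ tends to $0$ as $\delta\to 0$. Taking the pointwise bound $|f(x)-\sigma_\lambda(x)|\le\tfrac{1}{2\pi}\int |f(x)-f(x-t)|k_\lambda(t)\,dt$ directly (no H\"older step needed) and running the identical split over $I$ and its complement yields $\sup_x|f(x)-\sigma_\lambda(x)|\to 0$. The main obstacle is the $L^p$-continuity of translations, which is standard but nontrivial and relies on density of continuous periodic functions in $L^p[-\pi,\pi]$; once it is available, the rest of the argument is just careful bookkeeping of the split-integral estimate.
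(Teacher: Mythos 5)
Your proof is correct, and the overall architecture matches the paper's: both arguments reduce the problem to the inequality bounding $\|f-\sigma_\lambda\|_p$ by an average of the translation errors $\|f-f_t\|_p$ against $k_\lambda$, then conclude via continuity of translation in $L^p$ together with the small-$t$ / large-$t$ split, and both handle the continuous case by the same direct split. The one genuine difference is how you obtain the key intermediate inequality. The paper tests $\sigma_\lambda-f$ against an arbitrary $g\in L^q$, applies H\"older to the inner integral, and then invokes the duality $(L^p)^*=L^q$ together with the Hahn--Banach theorem to recover the norm estimate $\|\sigma_\lambda-f\|_p\le\frac{1}{2\pi}\int\|f_t-f\|_p\,k_\lambda(t)\,dt$. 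You instead factor $k_\lambda=k_\lambda^{1/p}k_\lambda^{1/q}$, apply H\"older pointwise (equivalently, Jensen's inequality for the probability measure $\frac{1}{2\pi}k_\lambda(t)\,dt$), and integrate with Fubini to get the $p$-th power version $\|f-\sigma_\lambda\|_p^p\le\frac{1}{2\pi}\int\|f-f_t\|_p^p\,k_\lambda(t)\,dt$. Your route is more elementary: it needs only H\"older and Tonelli for nonnegative integrands, and avoids the functional-analytic machinery of the dual space entirely. Both you and the paper cite $L^p$-continuity of translations as a known fact (you at least sketch its proof via density of continuous periodic functions, which the paper does not), so neither argument has a gap the other lacks.
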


\begin{proof}
First we consider the case where $f$ is continuous with $f(\pi)=f(-\pi)$. Since $f$ takes on the same value on its endpoints we can extend it periodically to a continuous function on the real line. Our goal then is to show that $\lim_{\lambda\rightarrow\lambda_0}|\sigma_{\lambda}(x)-f(x)|\rightarrow 0$\\
Since for all $\lambda$ we have $\int_{-\pi}^{\pi}  k_\lambda(x) dx =1$, then for $\delta>0$ we have $$\sigma_{\lambda}(x)-f(x)=\frac{1}{2\pi}\int_{-\delta}^{\delta}[f(x-t)-f(x)]k_{\lambda}(t)dt+ \int_{|t|>\delta}[f(x-t)-f(x)]k_{\lambda}(t)dt,$$ and we see that
\begin{equation}
\tag{$\star$} |\sigma_{\lambda}(x)-f(x)|\leq \sup_{-\delta<t<\delta}|f(x-t)-f(x)|+2\parallel f\parallel_\infty \sup_{t>|\delta|}k_{\lambda}(t).
\end{equation}
Now fix $\epsilon>0$, since $f$ is continuous clearly for sufficiently small $\delta$ we have that $$\sup_{-\delta<t<\delta}|f(x-t)-f(x)<\frac{\epsilon}{2}$$ Moreover, by property $2$ in {Definition 1.2.5} we have that $\lim_{\lambda\rightarrow\lambda_0}\sup_{x\notin I}|k_\lambda(x)|=0$. Therefore, we can pick $\delta_0>0$ such that
\begin{equation*}
 \sup_{t>|\delta|}\lbrace k_{\lambda}(t)|\lambda\in (\lambda_0-\delta_0 ,\lambda_0)\rbrace<\frac{\epsilon}{2}.  
\end{equation*}
Therefore, by ($\star$) we conclude for all $\epsilon>0$ there exists sufficiently small $\delta$, such that $\lambda\in (\lambda-\delta,\lambda_0)$ implies $|\sigma_{\lambda}(x)-f(x)|<\epsilon$. The proof of uniform convergence is complete.\\
Now for $f\in L^p$ we wish to estimate $$\parallel \sigma_{\lambda}(x)-f(x)\parallel_p$$
Let $g\in L^q$ be any function, where $\frac{1}{p}+\frac{1}{q}=1$. Then 
$$
\frac{1}{2\pi}\int_{-\pi}^{\pi}[\sigma_\lambda (x)-f(x)]g(x)dx=\frac{1}{4\pi^2} \int_{-\pi}^{\pi}[f(x-t))-f(x)]g(x)k_{\lambda}(t)dxdt$$ and thus
$$\Bigg|\frac{1}{2\pi}\int_{-\pi}^{\pi}[\sigma_\lambda (x)-f(x)]g(x)dx\Bigg|\leq \frac{1}{2\pi}\int_{-\pi}^{\pi}\Bigg|\frac{1}{2\pi}\int_{-\pi}^{-\pi}[f(x-t)-f(x)]g(x)dx\Bigg| k_\lambda(t)dt
$$
Using the H\"{o}lder inequality, the inside integral is not larger in modulus than $$\parallel g\parallel_q\cdot\parallel f_t-f\parallel_p, $$
where $f_t(x)=f(x-t)$ represents the translation of $f$ by $t$ units to the right. Thus, $$\Bigg|\frac{1}{2\pi}\int_{-\pi}^{\pi}[\sigma_\lambda (x)-f(x)]g(x)dx\Bigg|\leq\parallel g\parallel_q\cdot\frac{1}{2\pi}\int_{-\pi}^{\pi}\parallel f_t-f\parallel_p k_\lambda(t)dt$$
for every $g\in L^q$. Therefore, $$\parallel\sigma_\lambda-f\parallel_p\leq\frac{1}{2\pi}\int_{-\pi}^{\pi}\parallel f_t-f\parallel_p k_\lambda(t)dt.$$
We know that $L_q$ is the conjugate space of $L^p$. Now, since of course the norm $\parallel\cdot\parallel_p$ is a linear functional in $L^p$ if we are given a function $h$ in $L^p$ we can find (by the \emph{Hahn-Banach Theorem}) a $g\in L^q$ such that $\parallel g\parallel_q=1$ and $\int hg = \parallel g\parallel_p$. Thus for $\delta>0$ we write \begin{align*}
\parallel\sigma_\lambda-f\parallel_p&\leq\frac{1}{2\pi}\int_{-\delta}^\delta\parallel f -f_t\parallel_p k_\lambda(t)dt+\frac{1}{2\pi}\int_{t>|\delta|} \parallel f_t-f\parallel_p k_{\lambda}(t)dt\\
&\leq\sup_{-\delta<t<\delta}\parallel f_t-f\parallel_p+2\parallel f\parallel_p\cdot\sup_{t\geq|\delta|}k_{\lambda}(t)
\end{align*}
Obviously the translation is a continuous operation in the $L^p$ norm, thus for small $\delta$ we have $$\lim_{\lambda\rightarrow\lambda_0}\parallel \sigma_\lambda(x)-f(x)\parallel_p=0.$$
Hence the proof is complete.
\end{proof} 

The most common example of an Approximate identity is the Poisson kernel: $$p_r(\theta)=\frac{1-r^2}{1-2rcos(\theta)+r^2}=Re(\frac{e^{i\theta}+z}{e^{i\theta}-z})$$
 From the expression is easy to check that:\begin{enumerate}
 \item[$i$.]$p_r(\theta)\geq 0$.
 \item[$ii.$]$\frac{1}{2\pi}\int_{-\pi}^{\pi}p_r(\theta)d\theta=1$, $0\leq r<1$.
 \item[$iii.$] For any open interval $I$ about $\theta=0$,\\
  $\lim_{r\rightarrow 1}\sup_{x\notin I}|p_r(\theta)|=0$.
\end{enumerate}
Therefore the Poison kernel is indeed an approximate identity.\vspace{0.1in}

The Poisson kernel is a special harmonic function. In fact it is fundamental in function theory because it can be easily shown that any given complex harmonic function in $|z|<R$ $(R>1)$ can retrieve its values in the open unit disc, just from the power series expansion of its boundary function on the unit circle. This can be achieved by taking its convolution with the Poisson kernel, noting that the power series of $p_r(\theta)=\sum_{n=-\infty}^{\infty}r^{|n|} e^{in\theta}$. Indeed, the Fourier series of a general harmonic function is $u(re^{i\theta})=\sum_{-\infty}^{\infty} c_n r^{|n|}e^{in\theta}$ and the series converges uniformly on compact subsets of $|z|<R$. Now since the convolution multiplies "pointwise" the coefficients of the Fourier series of $u(e^{i\theta})$ with coefficients of the Fourier series of the Poisson kernel $p_r(\theta)$, the result follows. Moreover, the condition $R>1$ may also be dropped since the Poisson kernel is an approximate identity. That is, any given integrable function defined on the unit circle, generates a unique harmonic function on the unit disc by taking its convolution with the Poisson kernel.

\begin{thm}
Let $f$ be a complex-valued function in $L^p$ of the unit circle, where $1\leq p<\infty$. Define $f$ in the unit disc by $$f(re^{i\theta})=\frac{1}{2\pi}\int_{-\pi}^{\pi}f(t)p_r(\theta-t)dt.$$
Then the extended function $f$ is harmonic in the open unit disc and as $r\rightarrow 1$, the functions $f_r\equiv f(re^{i\theta})$ converge to $f$ in the $L^p$ norm. If $f$ is continuous on the unit circle, the $f_r$ converge uniformly to $f$. Thus the extended $f$ is continuous on the unit disc and harmonic in its interior.
\end{thm}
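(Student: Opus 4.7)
The theorem has three things to show: (a) the extended $f$ is harmonic in $|z|<1$, (b) $f_r\to f$ in the $L^p$ norm, with uniform convergence when $f$ is continuous, and (c) the extended $f$ is continuous on the closed disc in the continuous case. The plan is to reduce (b) directly to Theorem 1.2.3 by recognizing the formula as a convolution with the Poisson kernel, derive (a) by differentiating under the integral, and obtain (c) by combining interior continuity with the uniform boundary convergence from (b).

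Part (b) is essentially immediate. The defining formula presents $f_r=f*p_r$ as a convolution with the Poisson kernel, and $\{p_r\}_{r\in[0,1)}$ was shown just above to be an approximate identity with $\lambda_0=1$. Thus Theorem 1.2.3 yields $\|f_r-f\|_p\to 0$ for $1<p<\infty$, while its ``Moreover'' clause yields uniform convergence whenever $f$ is continuous on the circle (any such $f$ automatically satisfies $f(-\pi)=f(\pi)$). The borderline $p=1$ is not explicitly covered by the stated theorem, but the Hahn--Banach step in its proof applies verbatim since for $h\in L^1$ one may take $g=\overline{h}/|h|$ (zero where $h=0$), which lies in the unit ball of $L^\infty$ and satisfies $\int hg=\|h\|_1$.

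For (a), fix a compact set $K\subset\{|z|<1\}$ and note that
\[
P(z,e^{it})=\operatorname{Re}\!\left(\frac{e^{it}+z}{e^{it}-z}\right),
\]
together with all its partial derivatives in $z$, is bounded uniformly in $t\in[-\pi,\pi]$ on $K$. Since $f\in L^p([-\pi,\pi])\subset L^1([-\pi,\pi])$ by H\"older's inequality, differentiation under the integral sign is justified, so
\[
\Delta_z f(z)=\frac{1}{2\pi}\int_{-\pi}^{\pi}f(t)\,\Delta_z P(z,e^{it})\,dt=0,
\]
because for each fixed $t$ the kernel $P(\cdot,e^{it})$ is the real part of the map $z\mapsto (e^{it}+z)/(e^{it}-z)$, holomorphic on the disc. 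The same uniform bound shows that $f$ is continuous (indeed $C^\infty$) in the interior, giving the interior half of (c).

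For the boundary half of (c), fix any $e^{i\theta_0}$ and any sequence $r_n e^{i\theta_n}\to e^{i\theta_0}$. The triangle inequality gives
\[
\bigl|f(r_n e^{i\theta_n})-f(e^{i\theta_0})\bigr|\le\sup_{\theta}\bigl|f(r_n e^{i\theta})-f(e^{i\theta})\bigr|+\bigl|f(e^{i\theta_n})-f(e^{i\theta_0})\bigr|,
\]
and the right-hand side tends to zero: the first term by the uniform convergence established in (b), the second by continuity of $f$ on the circle. The only real obstacle throughout is justifying the interchange of Laplacian and integral in part (a), but once $P$ is recognized as the real part of a holomorphic function with uniform bounds on compact subsets of the disc, this reduces to a standard dominated-convergence argument.
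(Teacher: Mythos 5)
Your proof is correct and follows exactly the route the paper intends: the paper states this theorem without proof, immediately after verifying that the Poisson kernel is an approximate identity, so the $L^p$ and uniform convergence claims are meant to be read off from Theorem 1.2.3 just as you do, with harmonicity and boundary continuity supplied by the standard differentiation-under-the-integral and triangle-inequality arguments you give. Your observation that the $p=1$ case needs the explicit dual element $g=\overline{h}/|h|$ in place of the Hahn--Banach step is a worthwhile addition, since the paper's Theorem 1.2.3 is only stated for $p>1$ while this theorem claims $1\le p<\infty$.
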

 Another fundamental result in the theory of bounded analytic functions is \emph{Fatou's Theorem}. Its proof can be deduced simply by using approximate identities (\cite{HO}, p. 30-33) and a well known theorem of \emph{Lindel\"{o}f}.
 
 \begin{thm}[Fatou's Theorem]
 Let $\mu$ be a finite complex measure on the unit circle $T$, and let $f$ be the harmonic function in the unit disc defined by $$f(r,\theta)=\int_{-\pi}^{\pi}p_r(\theta-t)d\mu(t).$$
 Let $\theta_0$ be any point where $\mu$ is differentiable with respect to Lebesgue measure. Then $$\lim_{r\rightarrow 1}f(r,\theta_0)=2\pi(\frac{d\mu}{d\theta})(\theta_0)=2\pi\mu'(\theta_0)$$
 \end{thm}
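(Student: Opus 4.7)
My idea is to subtract off the ``main term'' coming from the derivative of $\mu$ at $\theta_0$ and verify that what remains tends to $0$ as $r \to 1^-$. By rotating the circle, I may assume without loss of generality that $\theta_0 = 0$, and set $L := \mu'(0)$. Let $m$ denote Lebesgue measure on $[-\pi,\pi]$ and introduce the complex Borel measure $\nu := \mu - L\,m$; by linearity of differentiation of measures, $\nu'(0) = 0$. Since $p_r$ is even and $\int_{-\pi}^{\pi}p_r(t)\,dt = 2\pi$ by property (ii) of the Poisson kernel listed before the theorem, linearity of the Poisson integral gives
\[
f(r,0) \;=\; 2\pi L \;+\; \int_{-\pi}^{\pi} p_r(-t)\,d\nu(t),
\]
so the theorem reduces to showing that this last integral tends to $0$.

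\textbf{Integration by parts.} Let $F$ be the distribution function of $\nu$, normalised so that $F(0) = 0$. Then $F$ is of bounded variation on $[-\pi,\pi]$, and the hypothesis that $\mu$ is differentiable at $0$ with derivative $L$ amounts to the distribution function of $\mu$ having derivative $L$ there, so $F'(0) = 0$ and hence $F(t) = o(|t|)$ as $t \to 0$. Because $p_r$ is even, Riemann--Stieltjes integration by parts gives
\[
\int_{-\pi}^{\pi}p_r(-t)\,d\nu(t) \;=\; \int_{-\pi}^{\pi}p_r(t)\,dF(t) \;=\; \bigl[p_r(t)F(t)\bigr]_{-\pi}^{\pi} \,-\, \int_{-\pi}^{\pi}F(t)\,p_r'(t)\,dt.
\]
The boundary term equals $p_r(\pi)\bigl[F(\pi)-F(-\pi)\bigr]$, and since $p_r(\pi) = (1-r)/(1+r) \to 0$ as $r \to 1^-$ while $F$ is bounded, this piece disappears in the limit.

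\textbf{Splitting and the main obstacle.} Given $\epsilon > 0$, I choose $\delta > 0$ so small that $|F(t)| \le \epsilon |t|$ for $|t| \le \delta$, and split $\int_{-\pi}^{\pi} F(t) p_r'(t)\,dt$ into contributions from $|t| \ge \delta$ and $|t| < \delta$. On the far region, $p_r'(t)$ tends to $0$ uniformly as $r \to 1^-$ (a routine estimate completely analogous to property (iii) of the Poisson kernel), so that integral vanishes in the limit. On the near region,
\[
\Bigl|\int_{|t|<\delta}F(t)\,p_r'(t)\,dt\Bigr| \;\le\; \epsilon\int_{|t|<\delta}|t|\,|p_r'(t)|\,dt,
\]
and the principal technical obstacle is establishing the uniform bound $\sup_{0<r<1}\int_{-\pi}^{\pi}|t|\,|p_r'(t)|\,dt < \infty$. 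Using the explicit formula $p_r'(t) = -2r(1-r^2)\sin t /(1-2r\cos t + r^2)^2$ together with the change of variables $u = t/(1-r)$, the integral transforms into a convergent one of the form $C\int_{\mathbb{R}} u^2/(1+c\,u^2)^2\,du$ which is independent of $r$. Once this uniform bound is in hand, the near-part contribution is at most $C'\epsilon$, and letting $\epsilon \to 0$ afterwards finishes the proof.
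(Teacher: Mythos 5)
Your argument is correct, and it departs from the paper's proof in its second half. Both proofs share the decisive first move: integrate the Stieltjes integral by parts so that the derivative lands on the Poisson kernel, turning the problem into controlling $\int F(t)\,p_r'(t)\,dt$. From there the paper symmetrizes using the oddness of $p_r'$, rewrites the integral as $\int K_r(t)\,\frac{F(\theta_0+t)-F(\theta_0-t)}{2\sin t}\,dt$ with $K_r(t)=-\tfrac{1}{r}\sin(t)\,p_r'(t)$, asserts that $\{K_r\}$ is an approximate identity, and invokes the approximate-identity convergence theorem proved earlier in the chapter; it also normalizes by subtracting a constant multiple of Lebesgue measure so that $F$ is periodic and the boundary term vanishes identically. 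You instead subtract $L\,dm$ to kill the derivative at the point, dispose of the boundary term via $p_r(\pi)=(1-r)/(1+r)\to 0$, and finish with a direct $\epsilon$--$\delta$ split whose technical heart is the uniform bound $\sup_{0<r<1}\int_{-\pi}^{\pi}|t|\,|p_r'(t)|\,dt<\infty$; your change of variables $u=t/(1-r)$ does establish this (using $1-2r\cos t+r^2=(1-r)^2+4r\sin^2(t/2)\geq (1-r)^2+\tfrac{4r}{\pi^2}t^2$ and $|t\sin t|\leq t^2$). The trade-off: the paper's route is shorter on the page because it outsources the limit to a previously proven theorem, but the phrase ``one may easily verify that $\{K_r\}$ form an approximate identity'' conceals precisely the kind of kernel estimates you carry out explicitly (the normalization $\tfrac{1}{2\pi}\int K_r=1$ and the decay away from $t=0$), so your version is more self-contained and makes the quantitative content visible. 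One small point worth stating explicitly in a final write-up is that the Riemann--Stieltjes integration by parts is legitimate here because $p_r$ is continuous (indeed smooth), so there are no common discontinuities with the bounded-variation function $F$.
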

 
 \begin{proof}
Let $F:[-\pi,\pi]\rightarrow\mathbb{C}$ be the complex valued function of bounded variation that induces $\mu$. That is, for all $g$ which are integrable with respect to our measure $\mu$ $$\int_{-\pi}^{\pi}gd\mu=\int_{-\pi}^{\pi}gdF.$$
Therefore, we must show that if $F$ is differentiable at $\theta_0$, then $$\lim_{r\rightarrow 1}f(r,\theta_0)=F'(\theta_0)$$
as $re^{i\theta}\rightarrow e^{i\theta_0}$ in a non tangential manner.\\
 Firstly, we observe that {Fatou's Theorem} is trivially true for the case where $\mu$ is the Lebesgue measure $d\theta$. Therefore, without loss of generality (by subtracting a suitable constant if necessary) we may assume that $\mu(T)=0$; so that $F(-\pi)=F(\pi)$.\\
 Now let $$ F(r,\theta)=\frac{1}{2\pi}\int_{-\pi}^{\pi} P(r,\theta-t)F(t)dt.$$
Then integration by parts yields, $$\frac{1}{2\pi}\int_{-\pi}^{\pi}
p'_r(\theta-t)F(t)dt=-\frac{1}{2\pi}\Big[ p_r(\theta-t)F(t)dt
\Big]_{-\pi}^{\pi}+\frac{1}{2\pi}\int_{-\pi}^{\pi}p_r(\theta-t)dF(t)$$ 
Since without loss of generality we may assume $F$ is periodic, $$\frac{1}{2\pi}f(r,\theta)=\frac{1}{2\pi}\int_{-\pi}^{\pi}p'_r(\theta-t)F(t)dt.$$
Hence, \begin{align*}
f(r,\theta)&=\int_{-\pi}^{\pi}p'_r(\theta-t)F(t)dt\\
&=\int_{-\pi}^{0}p'_r(\theta-t)F(t)dt+\int_{0}^{\pi}p'_r(\theta-t)F(t)dt\\
&=\int_{0}^{\pi}p'_r(t)[F(\theta-t)-F(\theta+t)]dt\\
&=\int_{0}^{\pi}[-\sin(t)p'_r(t)]\frac{F(\theta-t)-F(\theta+t)}{\sin(t)}dt.
\end{align*}
Now we note that since the Poisson kernel is clearly an even function, its derivative $p'(r,t)$ must be odd. Thus we have, $$\frac{1}{2\pi}f(r,\theta)=\frac{r}{2\pi}\int_{-\pi}^{\pi}K_r(t)\frac{F(\theta+t)-F(\theta-t)}{2 \sin (t)}dt$$
where we denote, $$K_r(t)\equiv-\frac{1}{r}\sin (t)p'_r(t).$$
One may easily verify that the functions $\{K_r\}$ form an approximate identity on $L_1[-\pi,\pi ].$ Moreover, if $F$ is differentiable at $\theta_0$, then the function $$G(t)=\frac{F(\theta_0+t)-F(\theta_0-t)}{2 \sin(t)}$$
is continuous at $t=0$ and takes the value $$G(0)=\lim_{t\rightarrow 0}\frac{F(\theta_0+t)-F(\theta_0-t)}{2 \sin (t)}=F'(\theta_0).$$
Now, since the functions $\{K_r\}$ form an approximate identity and since $G$ is continuous at $t=0$ it follows \begin{align*}
\frac{1}{2\pi}f(r,\theta_0)&=\lim_{r\rightarrow 1}\int_{-\pi}^{\pi}K_r(t)G(t)dt\\
&=G(0)\\
&=F'(\theta_0).
\end{align*}
Hence the radial convergence is proved. \\
 \indent For the non tangential convergence we refer to {Lindel\"{o}f's Theorem} which states the following:\\ Let $f$ be a bounded analytic function on the unit disc; then if the limit of $f$ exists along some curve terminating to a point $e^{i\theta_0}$ (in particular radial convergence), then the non tangential limit exist at that point. For a proof of the non tangential convergence without utilizing Lindel\"{o}f's theorem one may refer to (\cite{HO}, p. 36-37). The proof is complete.
 \end{proof}
 {Fatou's Theorem} implies some important corollaries. Firstly, we note that by \emph{Lebesgue's decomposition Theorem} every finite measure $\mu$ can be decomposed to an absolutely continuous part with respect to the Lebesgue measure and a singular part, that is $$d\mu=fd\theta+d\mu_s$$ where $f$ is Lebesgue integrable and $\mu_s$ a singular measure. Then $\mu$ is differentiable almost everywhere and $\frac{d\mu}{d\theta}=\frac{1}{2\pi}f$ almost everywhere. Therefore, if we apply {Fatou's Theorem} it follows that the Poisson integral of a finite measure has non tangential limits almost everywhere with respect to the normalized Lebesgue measure.
  
 \begin{cor} Let $f$ be a complex valued harmonic function in the unit disc and suppose that the integrals $$\int_{-\pi}^{\pi}|f(re^{i\theta})|^pd\theta$$ are bounded as $r\rightarrow 1$ for some $p$, $1\leq p<\infty$. Then for almost all $\theta$ the radial limits $$\tilde{f}(\theta)=\lim_{r\rightarrow 1}f(re^{i\theta})$$ exist and define a function $\tilde{f}$ in $L^p$ of the circle. If $p>1$ then $f$ is the Poisson integral of a unique finite measure whose absolutely continuous part is $\frac{1}{2\pi}\tilde{f}d\theta$. If $f$ is a bounded harmonic function, the boundary values exist almost everywhere and define a bounded measurable function $\tilde{f}$ whose Poisson integral is $f$.
  \end{cor}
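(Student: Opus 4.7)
The plan is to realize $f$ as the Poisson integral of a finite boundary measure by a weak-star compactness argument, and then to invoke Fatou's Theorem (Theorem 1.2.5) to recover the pointwise boundary values as the density of the absolutely continuous part.

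First I would show that the dilates $f_r(\theta)=f(re^{i\theta})$ induce a norm-bounded family of measures on the circle $T$. H\"older's inequality with $g\equiv 1$ gives $\|f_r\|_1\leq (2\pi)^{1/q}\|f_r\|_p$ for $p>1$ (trivially for $p=1$), so the complex measures $d\nu_r=f_r\,d\theta$ are bounded in $M(T)\cong C(T)^\ast$. By the Banach-Alaoglu theorem there exist a sequence $r_n\to 1$ and a finite complex measure $\nu$ on $T$ such that $\nu_{r_n}\to\nu$ in the weak-star topology.

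Next I would identify $f$ as the Poisson integral of $\mu:=\nu/(2\pi)$. For any $z=Re^{i\theta}$ in the disk and $R<\rho<1$, the Poisson formula of Theorem 1.2.4 applied to the harmonic function $g(w)=f(\rho w)$ on the closed unit disk yields
\[
f(Re^{i\theta})=\frac{1}{2\pi}\int_{-\pi}^{\pi}p_{R/\rho}(\theta-t)f(\rho e^{it})\,dt.
\]
Setting $\rho=r_n$ and letting $n\to\infty$, the uniform continuity of $(r,\phi)\mapsto p_r(\phi)$ on compacta of $[0,1)\times T$ combined with the weak-star convergence of $\nu_{r_n}$ produces
\[
f(Re^{i\theta})=\frac{1}{2\pi}\int_{-\pi}^{\pi}p_R(\theta-t)\,d\nu(t)=\int_{-\pi}^{\pi}p_R(\theta-t)\,d\mu(t),
\]
which is precisely the Poisson integral in the normalization of Theorem 1.2.5. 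I would then invoke Fatou's Theorem: at every point $\theta_0$ of Lebesgue differentiability of $\mu$ the radial limit $\tilde{f}(\theta_0)=2\pi\mu'(\theta_0)$ exists, and since $\mu'$ agrees almost everywhere with the density of the absolutely continuous part of $\mu$, that absolutely continuous part equals $\mu'\,d\theta=\frac{1}{2\pi}\tilde{f}\,d\theta$, as claimed. The integrability $\tilde{f}\in L^p$ then follows from Fatou's lemma applied to $|f_{r_n}|^p\to|\tilde{f}|^p$.

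For the strengthened conclusion when $p>1$, and for the bounded case, I would instead apply Banach-Alaoglu in the dual pair $L^p=(L^q)^\ast$ (respectively $L^\infty=(L^1)^\ast$). A weak-star convergent subsequence $f_{r_n}\to h$ tested against continuous $\phi\in C(T)\subset L^q$ identifies $\nu$ with $h\,d\theta$, and then $h=\tilde{f}$ almost everywhere by the previous paragraph, so $\mu=\frac{1}{2\pi}\tilde{f}\,d\theta$ is purely absolutely continuous. Uniqueness of the representing measure follows from the fact that the Fourier-Stieltjes coefficients of $\mu$ can be read off from the expansion $p_r(\phi)=\sum_{n}r^{|n|}e^{in\phi}$. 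The main obstacle will be the identification step: showing that the weak-star limit of $f_{r_n}\,d\theta$ genuinely is the boundary measure of $f$ in the Poisson sense, which rests on the reproducing identity for harmonic functions on subdisks together with $p_{R/\rho}\to p_R$ uniformly in $t$ as $\rho\to 1$.
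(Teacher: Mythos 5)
Your proposal is correct and follows essentially the route the paper intends: the paper states this corollary without proof (sketching only the Lebesgue decomposition plus Fatou's Theorem step and deferring the rest to Hoffman), and your weak-star compactness argument producing the representing measure, followed by the reproducing identity on subdisks and Fatou's Theorem, is exactly the standard proof from that source. The refinements for $p>1$ (via $L^p=(L^q)^\ast$) and the Fourier--Stieltjes uniqueness argument likewise match the paper's surrounding material (its Herglotz Theorem 1.2.6).
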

  
 \noindent\textbf{Remark 1.2.1.} The uniqueness part of the fact that if $p>1$, $f$ can be expressed as the Poisson integral of a unique finite measure is usually referred to as \emph{Herglotz representation Theorem} and we will provide the proof bellow.
  
  \begin{thm}
  Let $U(z)$ be a harmonic function on the unit disc which can be written as $$U(z)=\int_{-\pi}^{\pi} p(r,\theta-t)d\mu(t)$$
  Then $\mu(t)$ is essentially unique.
  \end{thm}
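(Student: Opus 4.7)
The plan is to reduce uniqueness to the vanishing statement: if $\int_{-\pi}^{\pi} p(r,\theta-t)\,d\mu(t) = 0$ for all $re^{i\theta}$ in the open disc, then $\mu = 0$. This reduction is immediate by linearity: if two measures $\mu_1,\mu_2$ induce the same harmonic function $U$, apply the vanishing statement to $\mu = \mu_1 - \mu_2$.

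To prove the vanishing statement, I would exploit the Fourier expansion of the Poisson kernel already recorded in the excerpt, namely
\[
p(r,\theta-t) = \sum_{n=-\infty}^{\infty} r^{|n|} e^{in(\theta-t)},
\]
where, for each fixed $r<1$, the series converges uniformly in $t\in[-\pi,\pi]$ (it is dominated by the geometric series $\sum r^{|n|}$). This uniform convergence legitimizes exchanging sum and integral against the finite measure $\mu$, yielding
\[
0 \;=\; \int_{-\pi}^{\pi} p(r,\theta-t)\,d\mu(t) \;=\; \sum_{n=-\infty}^{\infty} r^{|n|} e^{in\theta} \hat{\mu}(n),
\]
where $\hat{\mu}(n) := \int_{-\pi}^{\pi} e^{-int}\,d\mu(t)$ are the Fourier--Stieltjes coefficients of $\mu$. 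For each fixed $r\in(0,1)$, the right-hand side is a Fourier series in $\theta$ which vanishes identically; by uniqueness of Fourier coefficients of continuous functions, $r^{|n|}\hat{\mu}(n)=0$ for every $n\in\mathbb{Z}$, and dividing by $r^{|n|}$ gives $\hat{\mu}(n)=0$ for all $n$.

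From the vanishing of all Fourier--Stieltjes coefficients I would conclude $\mu=0$ as follows. The trigonometric polynomials $\sum_{|n|\leq N} c_n e^{int}$ are dense in $C[-\pi,\pi]$ (restricted to continuous functions taking the same value at $\pm\pi$) by Weierstrass's theorem, so $\int f\,d\mu=0$ for every continuous $f$ on the circle. By the Riesz representation theorem for the dual of $C(T)$, the only complex Borel measure annihilating every continuous function is the zero measure, hence $\mu \equiv 0$.

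The main obstacle is bookkeeping rather than conceptual: one must justify the interchange of the infinite sum with the integral against a complex measure and handle the reduction to the continuous-function duality cleanly. Both steps are standard, and given the tools already developed in the preceding sections (Poisson kernel expansion, approximate identities, finite signed/complex measures), the argument should be self-contained and short.
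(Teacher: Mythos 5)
Your proposal is correct, and its first half coincides with the paper's: both arguments expand the kernel in powers of $ze^{-it}$ (the paper works with the Herglotz kernel $\frac{e^{it}+z}{e^{it}-z}$, you with the Poisson kernel directly), interchange sum and integral against the finite difference measure, and extract that all Fourier--Stieltjes coefficients $\int e^{-ikt}\,d\omega(t)$ vanish. Where you diverge is in how you pass from vanishing moments to the vanishing of the measure. The paper integrates $\int e^{ikt}\,d\omega(t)=0$ by parts to turn the Stieltjes moments into ordinary Fourier coefficients of the bounded-variation primitive $\omega$, subtracts its mean, and applies Parseval's identity in $L^2$ to conclude $\omega$ is constant at its points of continuity; this keeps the argument entirely within the $L^2$ theory already developed, at the cost of some bookkeeping with the boundary terms and with "essential" uniqueness being phrased at the level of the generating function. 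You instead invoke density of trigonometric polynomials in $C(T)$ and the Riesz representation of $C(T)^*$ to kill the measure outright; this is shorter and gives the cleaner statement $\mu=0$ as a measure, but imports the Riesz representation theorem, which the paper does not otherwise use. Both routes are sound; your justification of the sum--integral interchange (uniform convergence in $t$ for fixed $r<1$ against a finite measure) and the division by $r^{|n|}$ are fine. The only minor point worth flagging is that the $C(T)$-duality step implicitly identifies $\mu$ with a measure on the circle (so that endpoint masses at $\pm\pi$ are merged), which matches the paper's implicit convention and the qualifier "essentially" in the statement.
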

  \begin{proof}
  First recall that every measure $d\mu$ on the circle is generated by a bounded variation function $\mu$ in the following manner. $\mu[e^{ia},e^{ib}]=\mu(e^{ia})-\mu(e^{ib})$. Now, suppose there exist two functions of bounded variation, $\mu_1$ and $\mu_2$ such that $$U(z)=\int_{-\pi}^{\pi}p(r,\theta-t)d\mu_k(t)\  \ k=1,2.$$ Then we claim $\omega(t)\equiv \mu_1(t)-\mu_2(t)$ is constant almost everywhere. Clearly $\omega(t)$ is of bounded variation since $\mu_1$ and $\mu_2$ are. Hence $0=\int_{-\pi}^{\pi}p(r,\theta-t)d\omega(t)$ is well defined. Recall that the Poisson kernel $$p(r,t)=Re\left(\frac{e^{it}+z}{e^{it}-z}\right)$$ for $|z|<1$. Now, since the analytic completion of a zero harmonic function is a constant function it follows $$\int_{-\pi}^{\pi}\frac{e^{it}+z}{e^{it}-z}d\omega(t)=i\gamma,\ \ \gamma\in \mathbb{R}$$
  Hence if we set $z=0$ we get $\omega(\pi)-\omega(-\pi)=i\gamma$. But since the left hand side is real it follows $\gamma=0$.\\
  Now, $$\frac{e^{it}+z}{e^{it}-z}=1+2\sum_{k=1}^\infty(ze^{-it})^k$$Hence, \begin{align*}
  &\int_{-\pi}^{\pi} \left[1+2\sum_{k=1}^\infty z^ke^{-ikt}\right]d\omega(t)=0\\
  \Rightarrow &[\omega(t)]_{-\pi}^{\pi}+2\sum_{k=1}^\infty z^k\int_{-\pi}^{\pi}e^{-ikt}d\omega(t)=0\\
  \Rightarrow &\int_{-\pi}^{\pi}e^{-ikt}d\omega(t)=0, \ \ k\in\mathbb{N}
  \end{align*}
  Now, since the conjugate of a Stiltjes integral is the Stiltjes integral of the conjugate it follows that $\int_{-\pi}^{\pi}e^{ikt}d\omega(t)=0$, $k\in\mathbb{Z}$.\\
  Let $k\not=0$ and integrate by parts $$0=\int_{-\pi}^{\pi}e^{ikt}d\omega(t)=[\omega(t)e^{ikt}]_{-\pi}^{\pi}-\frac{1}{ik}\int_{-\pi}^{\pi}\omega(t)e^{ikt}dt$$
  It follows $\int_{-\pi}^{\pi}\omega(t)e^{ikt}dt=0$. Now denote $c=\int_{-\pi}^{\pi}\omega(t)dt$. Then, $$\int_{-\pi}^{\pi}[\omega(t)-c]e^{-ikt}dt=\int_{-\pi}^{\pi}\omega(t)e^{-ikt}dt=-c\int_{-\pi}^{\pi}e^{ikt}dt=0$$
  Now since $\omega(t)-c\in L_2[-\pi,\pi]$ it follows by \emph{Parseval's Identity} that $\int_{-\pi}^{\pi}|\omega(t)-c|^2dt=0$. Consequently, we must have that $\omega(t)=c$ at all points of continuity. However, since $\omega(t)$ is a function of bounded variation; it follows that it is continuous almost everywhere, hence the proof is complete.
  \end{proof}
  
 Our results about harmonic functions also apply in the case of analytic functions which are a particular case of complex harmonic functions. For $0< p\leq\infty$ denote by $H^p$ the class of analytic functions $f$ in the unit disc for which the functions $f_r(\theta)=f(re^{i\theta})$ are bounded in the $L^p$ norm as $r\rightarrow 1$. If $1\leq p\leq\infty$, then $H^p$ is a Banach space under the norm $$\parallel f\parallel=\lim_{r\rightarrow 1}\parallel f_r\parallel_p.$$

 Now we will proceed to see that every function in $H^p$ can be factorized uniquely into the product of three different types of functions. This factorization theorem is indeed of great significance because it allows us to analyze a function in $H^p$ by looking at each of its factors independently. In fact in the next chapter we will attempt to solve a problem proposed in $\cite{Da}$ for one of the three types of functions comprising the factorization, namely Blaschke products. One might then continue the research by examining the problem under the lens of the remaining types of functions in the factorization theorem. Hopefully, by examining the problem with each type of functions one will indeed manage to solve the problem for a general function in $H^p$.\vspace{0.12in}

\noindent\textbf{Definition 1.2.6.}
An \emph{inner function} is an analytic function $g$ in the unit disc, such that $|g(z)|\leq 1$ and $|g(e^{i\theta}|=1$ almost everywhere on the unit circle. An \emph{outer function} is an analytic function $F$ in the unit disc of the form $$F(z)=\lambda \exp\left[ \frac{1}{2\pi}\int_{-\pi}^{\pi}\frac{e^{i\theta}+z}{e^{i\theta}-z}k(\theta)d\theta\right]$$
where $k$ is a real valued integrable function on the circle and $\lambda$ is a complex number of modulus $1$.

\begin{thm}
Let $f$ be a non zero function in $H^1$. Then $f$ can be written in the form $f=gF$ where $g$ is an inner function and $F$ an outer function. Moreover, this factorization is unique up to a constant of modulus $1$.
\end{thm}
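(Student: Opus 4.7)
The plan is to directly construct the outer function from the boundary data of $f$ and then show that the quotient is automatically inner. First, by the corollary to Fatou's Theorem established above, the non‑zero function $f\in H^1$ has non‑tangential boundary values $f^*(\theta)$ almost everywhere on the unit circle, with $f^*\in L^1$. The next preliminary step, which is classical, is to establish that $\log|f^*|\in L^1$. The bound $\log^+|f^*|\le |f^*|$ handles the positive part using $f^*\in L^1$; the negative part $\log^-|f^*|$ is controlled by applying Jensen's formula on the circles $|z|=r<1$ (dividing out the finitely many zeros of $f$ inside $|z|<r$ by finite Blaschke factors) and then letting $r\to 1$, using the uniform bound $\int \log^+|f_r|\, d\theta \le \int |f_r|\, d\theta \le \|f\|_{H^1}$ together with Fatou's lemma.

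With $\log|f^*|\in L^1$ in hand, set
$$F(z)=\exp\Bigl[\frac{1}{2\pi}\int_{-\pi}^{\pi}\frac{e^{it}+z}{e^{it}-z}\log|f^*(t)|\,dt\Bigr],$$
which is, by Definition 1.2.6, an outer function (with $\lambda = 1$; the correct unimodular $\lambda$ will be fixed by the quotient). Taking real parts in the exponent gives $|F(re^{i\theta})|=\exp\bigl[(p_r * \log|f^*|)(\theta)\bigr]$. Because the Poisson kernel is an approximate identity (established above), $p_r*\log|f^*|\to \log|f^*|$ almost everywhere as $r\to 1$, so $|F^*(\theta)|=|f^*(\theta)|$ a.e. Define $g=f/F$. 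Its boundary values have modulus $1$ almost everywhere, which is half of what is needed to call $g$ inner.

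The main obstacle is the interior inequality $|g(z)|\le 1$, equivalently
$$\log|f(z)|\le \frac{1}{2\pi}\int_{-\pi}^{\pi}p_r(\theta-t)\log|f^*(t)|\,dt=\log|F(z)|,$$
which is the Poisson–Jensen majorization. The plan is to prove this by applying Jensen's inequality on the circle $|w|=\rho$ for each $\rho<1$ to the dilated function $f_\rho(w)=f(\rho w)$, which is analytic in a neighborhood of the closed disc; one first divides by a finite Blaschke product absorbing the zeros of $f$ in $|w|\le \rho$, then re‑multiplies noting that such Blaschke factors have modulus at most $1$ inside the disc. This yields the Poisson estimate with $\log|f_\rho^*|$ in place of $\log|f^*|$, and passage to the limit $\rho\to 1$ uses Fatou's lemma together with the already established boundary convergence of $f_\rho$. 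Once this bound is secured, $g$ has modulus $\le 1$ in the disc with unimodular boundary values a.e., hence $g$ is inner, and $f=gF$ is the desired factorization.

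For uniqueness, suppose $f=g_1F_1=g_2F_2$ are two such factorizations. On the boundary $|F_j^*|=|g_j^*|^{-1}|f^*|=|f^*|$ a.e., so $\log|F_1^*|=\log|F_2^*|$ in $L^1$. The defining integral representation in Definition 1.2.6 shows that an outer function is determined by its boundary modulus up to the unimodular constant $\lambda$, so $F_1=c F_2$ for some $|c|=1$, and consequently $g_1=c^{-1}g_2$, completing the proof.
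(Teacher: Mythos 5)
The paper states this factorization theorem without proof (it is quoted from Hoffman's book), so there is no internal argument to compare against; what you give is the classical construction, and its skeleton is sound. The integrability $\log|f^*|\in L^1$ via Jensen's inequality and $\log^+|f^*|\le|f^*|$ is right; the outer function built from $\log|f^*|$ has boundary modulus $|f^*|$ a.e.\ by Fatou's theorem applied to the absolutely continuous measure $\log|f^*|\,d\theta$; and the uniqueness argument is complete once you note explicitly that $f^*\neq 0$ a.e.\ (which your integrability of $\log|f^*|$ already supplies).

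The one step that does not close as written is the limit passage in the Poisson--Jensen majorization $\log|f(z)|\le\frac{1}{2\pi}\int p_r(\theta-t)\log|f^*(t)|\,dt$. You invoke ``Fatou's lemma together with the already established boundary convergence of $f_\rho$,'' but Fatou's lemma only controls the $\log^-$ half: it yields $\int p_r\log^-|f^*|\le\liminf_\rho\int p_r\log^-|f_\rho|$, which is the inequality you want for the negative part. For the positive part you need $\limsup_\rho\int p_r\log^+|f_\rho|\le\int p_r\log^+|f^*|$, and a.e.\ convergence plus Fatou gives only the opposite (lower) bound. What is actually required is uniform integrability of the family $\lbrace\log^+|f_\rho|\rbrace$, which follows from $\parallel f_\rho-f^*\parallel_1\rightarrow 0$; for $H^1$ functions this norm convergence is itself a theorem, usually obtained from the Riesz factorization $f=Bh$ with $B$ a Blaschke product and $h$ zero-free, writing $h=g^2$ with $g\in H^2$ and using the $L^2$ theory. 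This is not circular, since the Blaschke factorization is established independently of the inner--outer factorization (compare Theorem 2.1.4 of the thesis), but it is an ingredient your sketch omits and the argument does not go through without it.
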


\chapter{Blaschke products and Boundary behavior problems}

\section{Blaschke products}
\indent \indent Now we are going to decompose each inner function in the product of two particular cases of inner functions. The first factor is called Blaschke products and is the factor containing all the zeros of our inner function. The second factor is determined by a measure singular with respect to the Lebesgue measure on the unit circle. We note that many of the results in this chapter are taken from the recent paper \cite{SP}.\vspace{0.12in}

 Below $D$ and $T$ denote the open unit disk and the unit circle in the complex plane respectively. Let $m$ be the Lebesgue measure on $T$. The Blaschke products are forming an important subclass of the well known space $H^\infty$ of all bounded analytic functions in $D$. We begin this section by giving motivation for the definition of Blaschke products and then proceed to answer some questions regarding the boundary behaviors of Blaschke products.

\begin{thm}
Let $f$ be a bounded analytic function in $D$ and suppose $f(0)\not=0$. If $\lbrace a_n\rbrace$ is the sequence of zeros of $f$ in the $D$, each repeated as often as the multiplicity of the zero of $f$, then the product $\prod_n|a_n|$ is convergent, that is, $$\sum_n1-|a_n|<\infty.$$
\end{thm}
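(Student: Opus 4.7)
The plan is to deduce this from Jensen's formula applied at radii approaching $1$, using boundedness of $f$ to control the integral term.

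First I would recall Jensen's formula in the following form: if $g$ is analytic on a neighborhood of $\overline{D(0,r)}$, $g(0)\neq 0$, and $b_1,\dots,b_N$ are the zeros of $g$ in $D(0,r)$ listed with multiplicity and none on $|z|=r$, then
\[
\log|g(0)| \;=\; \sum_{k=1}^{N}\log\frac{|b_k|}{r} \;+\; \frac{1}{2\pi}\int_0^{2\pi}\log|g(re^{i\theta})|\,d\theta.
\]
Next I would apply this to $f$ at a radius $r<1$ chosen so that $f$ has no zero on the circle $|z|=r$ (such $r$ exist since the zeros of $f$ form an at most countable set, each $|a_n|<1$). Let $N(r)$ denote the number of zeros of $f$ in $D(0,r)$ and say $|f|\le M$ on $D$; then $\log|f(re^{i\theta})|\le \log M$, and Jensen's formula yields
\[
\sum_{k=1}^{N(r)}\log\frac{r}{|a_k|} \;\le\; \log M \,-\, \log|f(0)|.
\]

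Now I would let $r\to 1$ through a sequence of admissible radii. The sum on the left has nonnegative terms and each term $\log(r/|a_k|)$ increases to $\log(1/|a_k|)$, while $N(r)$ increases to the total number of zeros; by monotone convergence,
\[
\sum_{n}\log\frac{1}{|a_n|} \;\le\; \log\frac{M}{|f(0)|} \;<\; \infty.
\]
Finally, the elementary inequality $-\log x=\int_x^1 \frac{dt}{t}\ge \int_x^1 dt = 1-x$ for $x\in(0,1]$ gives $\log(1/|a_n|)\ge 1-|a_n|$, so $\sum_n(1-|a_n|)<\infty$. The equivalence between this and the convergence of $\prod_n|a_n|$ (to a nonzero value) is the standard fact that for $x_n\in(0,1)$, $\prod x_n$ converges iff $\sum (1-x_n)<\infty$.

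The main technical point is the passage to the limit $r\to 1$: one must pick $r$ avoiding the zero set of $f$, and one must justify that the zeros in $D(0,r)$ exhaust all zeros as $r\to 1$. Both are handled by the fact that the zero set of a nonzero holomorphic function in $D$ has no accumulation point inside $D$ (so $f$ has only finitely many zeros in each $\overline{D(0,r)}$ and we may choose $r$ freely in a set of full measure), and by monotone convergence applied to the nonnegative series. Everything else is routine.
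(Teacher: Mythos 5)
Your proof is correct. Note that the thesis does not reproduce an argument for this theorem at all; it simply cites Hoffman (\cite{HO}, p.~63--64), where the standard proof runs differently from yours: one forms the finite Blaschke products $B_n$ with zeros $a_1,\dots,a_n$, observes that $f/B_n$ is analytic in $D$ and bounded by $M=\sup_D|f|$ (by the maximum modulus principle, since $|B_n(z)|\to 1$ as $|z|\to 1$), and evaluates at the origin to get $\prod_{k=1}^n|a_k|\ge |f(0)|/M>0$; the decreasing partial products are then bounded below by a positive constant, which is exactly the convergence of the product. Your route through Jensen's formula arrives at the same quantitative bound, $\sum_n\log(1/|a_n|)\le\log\bigl(M/|f(0)|\bigr)$, after exponentiation. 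The trade-off is that Hoffman's argument is self-contained modulo the maximum modulus principle and needs no integral identity, while yours imports Jensen's formula as a black box but in exchange handles the bookkeeping (choice of admissible radii avoiding the zero circle, exhaustion of the zeros as $r\to 1$, monotone convergence) completely explicitly, and generalizes immediately to the Nevanlinna class where one only controls $\int\log^+|f(re^{i\theta})|\,d\theta$ rather than $\sup|f|$. Both are standard and both are complete; your handling of the two technical points you flag at the end is adequate.
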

\begin{proof}
The proof is simple and can be found in (\cite{HO}, p. 63-64).
\end{proof}

 Our goal now is to define an inner function having the zeros of $f$, so that we can divide by $f$ to obtain a zero free inner function. The most intuitive construction would be to define the infinite product $$\prod_n\frac{z-a_n}{1-\overline{a_n}z}.$$ Unfortunately this infinite product does not converge necessarily. However, if we simply rotate each term by a factor of $-\frac{\overline{a_n}}{|a_n|}$ then the infinite product converges. For the proof see (\cite{HO}, p. 64-65).
The above motivates the following definition for Blaschke products.\vspace{0.12in}

\noindent\textbf{Definition 2.1.1.}
Let $\lbrace a_n \rbrace$ be a sequence of complex numbers in the open unit disc, satisfying the condition $\sum_n1-|a_n|<\infty$, then the infinite product $$B(z)\equiv \prod_{n=1}^\infty-\frac{\overline{a_n}}{|a_n|}\frac{z-a_n}{1-\overline{a_n}z}$$ is well defined in the unit disc and is called a \emph{Blashcke product}.

The following well known theorem regarding boundary behavior of Blaschke products in the unit disc $D$ can be found in (\cite{HO}, p. 68). The theorem is actually concerned with convergence in the whole complex plane $\mathbb{C}$ but for the purpose of this Thesis we will restrict the Theorem just in the unit disc $D$.
\begin{thm}
If $B(z)$ is a Blaschke product with zeros $\lbrace a_n\rbrace$, then $B(z)$ converges uniformly on compact subsets of $D$. Moreover, the unrestricted limits of $B(z)$ exist everywhere on $T$ and are of modulus $1$, except at the closed set $K$ consisting of all points in $T$ that are accumulation points of $\lbrace a_n\rbrace$. The unrestricted limits of $B(z)$ fail to exist on $K$.
\end{thm}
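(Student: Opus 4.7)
My plan is to treat the three assertions in order, since they build on one another. First I would fix a compact set $E \subset D$, so $|z| \le r < 1$ on $E$, and carry out the standard estimate for a single Blaschke factor:
\[
1 - \left(-\frac{\overline{a_n}}{|a_n|}\cdot\frac{z-a_n}{1-\overline{a_n}z}\right) \;=\; \frac{(1-|a_n|)(|a_n|+\overline{a_n}z)}{|a_n|(1-\overline{a_n}z)} .
\]
On $E$ this is bounded in modulus by $C\,(1-|a_n|)$, with $C$ depending only on $r$ and on a tail lower bound for $|a_n|$ (only finitely many zeros can have $|a_n| \le \tfrac12$ since $\sum(1-|a_n|)<\infty$). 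The standard criterion for uniform convergence of infinite products then yields uniform convergence of $B(z)$ on $E$, and in particular $B$ is analytic on $D$.

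Next, for a point $\zeta \in T \setminus K$, the assumption that $\zeta$ is not an accumulation point of $\{a_n\}$ gives some $\delta>0$ with $|a_n - \zeta| \ge \delta$ for every $n$. The poles of the $n$-th Blaschke factor are at $1/\overline{a_n}$, and the elementary identity $|1/\overline{a_n} - a_n| = (1-|a_n|^2)/|a_n| \to 0$ shows that $1/\overline{a_n}$ also stays outside the disk $D(\zeta,\delta/2)$ for all large $n$. Hence each factor is analytic on $D(\zeta,\delta/2)$, and the same estimate as above (now with $|1-\overline{a_n}z|$ bounded below on this disk) shows that the partial products converge uniformly on a neighborhood of $\zeta$ in $\mathbb{C}$. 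So $B$ extends to an analytic function across $\zeta$; in particular the unrestricted limit $B(\zeta)$ exists. Because each Blaschke factor has modulus $1$ on $T$, the continuity of the extended $B$ forces $|B(\zeta)|=1$.

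The genuinely new content is the failure assertion on $K$, and this is where I expect the main obstacle. Given $\zeta \in K$, I would produce two sequences in $D$ tending to $\zeta$ along which $B$ has different limits. The easy sequence is the zeros themselves: pick a subsequence $a_{n_k} \to \zeta$, and then $B(a_{n_k}) = 0$, so any unrestricted limit at $\zeta$ would have to equal $0$. To rule this out I would exhibit a sequence along which $|B|$ stays bounded away from $0$. Here the route is to use that $B$ is an inner function (each factor is bounded by $1$ and has modulus $1$ on $T$), so by Fatou's theorem (Theorem 1.2.4 and its corollary in the excerpt) the radial limits of $B$ exist and have modulus $1$ almost everywhere on $T$. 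In every arc of $T$ around $\zeta$ there are therefore points $e^{i\theta}$ with $|B(re^{i\theta})| \to 1$ as $r \to 1^-$; choosing $e^{i\theta_k} \to \zeta$ and $r_k \to 1^-$ fast enough along each such radius produces points $z_k \to \zeta$ in $D$ with $|B(z_k)| \to 1$.

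Combining the two sequences, no single limit can exist, so the unrestricted limit of $B$ fails at every point of $K$. Finally, $K$ is closed because it is the set of accumulation points of $\{a_n\}$, which is always a closed subset of $\overline{D}$, and $K \subset T$ since the Blaschke condition forces $|a_n| \to 1$. This completes the three parts of the theorem.
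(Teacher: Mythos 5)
Your proof is correct. Note that the thesis does not actually prove this theorem --- it states it and refers to Hoffman's book --- so your argument is a self-contained substitute rather than a parallel to a proof in the text. Your first two parts (the identity $1-b_n(z)=\frac{(1-|a_n|)(|a_n|+\overline{a_n}z)}{|a_n|(1-\overline{a_n}z)}$, the resulting bound $C(1-|a_n|)$ on compacta and on a full neighborhood of any $\zeta\notin K$, hence analytic continuation across $T\setminus K$ with $|B|=1$ there) are the standard route and are carried out correctly. For the failure on $K$, your two-sequence argument (zeros forcing the limit to be $0$, radial limits of modulus $1$ at nearby boundary points forcing it to have modulus $1$) is exactly the mechanism the thesis itself uses later in Step 1 of the proof of Theorem 2.1.9 and in Proposition 2.3.1. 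One small correction of attribution: the fact that the radial limits of $B$ have modulus $1$ almost everywhere is not Fatou's theorem (which only gives existence a.e.) but the F.\ Riesz theorem, stated as Theorem 2.1.3 in the thesis; since its proof there is an $H^2$ argument independent of the present theorem, invoking it creates no circularity despite its appearing immediately afterward in the text. With that citation repaired, the argument is complete.
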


Now we will see that a Blaschke product is indeed an Inner function.

\begin{thm}[Riesz Theorem]
The moduli of the radial limits of a Blaschke product $B$ are equal to $1$ almost everywhere on $T$.
\end{thm}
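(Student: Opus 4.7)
The plan is to factor $B$ into a finite partial product times an infinite tail and then combine Jensen's inequality with the Blaschke summability condition. The finite factor has modulus identically $1$ on $T$, so the radial boundary values of $B$ and of the tail agree in modulus almost everywhere, and the Blaschke condition forces the tail's value at the origin to tend to $1$, which together with Jensen pins down the integral of $\log|\tilde B|$ over $T$.

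Concretely, I would write $B = B_N R_N$ with
$$B_N(z)=\prod_{n=1}^{N}-\frac{\overline{a_n}}{|a_n|}\frac{z-a_n}{1-\overline{a_n}z}, \qquad R_N(z)=\prod_{n=N+1}^{\infty}-\frac{\overline{a_n}}{|a_n|}\frac{z-a_n}{1-\overline{a_n}z}.$$
Since each factor of $B_N$ is a unimodular constant times a Möbius automorphism of $D$, we have $|B_N(e^{i\theta})|=1$ everywhere on $T$. Both $B\in H^\infty$ and $R_N\in H^\infty$ possess radial boundary limits almost everywhere by Theorem 1.2.4 (Fatou's Theorem), and consequently $|\tilde B(\theta)|=|\tilde R_N(\theta)|$ a.e.

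Next I would apply Jensen's inequality to $R_N$. Since $R_N$ is analytic and bounded by $1$ on $D$ and does not vanish at the origin (indeed $R_N(0)=\prod_{n>N}|a_n|$, which is strictly positive), $\log|R_N|$ is subharmonic and nonpositive. The mean value inequality at $z=0$, together with the boundary passage described below, yields
$$\log|R_N(0)| \;\leq\; \frac{1}{2\pi}\int_{-\pi}^{\pi}\log|\tilde R_N(\theta)|\,d\theta \;=\; \frac{1}{2\pi}\int_{-\pi}^{\pi}\log|\tilde B(\theta)|\,d\theta \;\leq\; 0.$$
The hypothesis $\sum(1-|a_n|)<\infty$ is precisely what guarantees $\prod_{n>N}|a_n|\to 1$ as $N\to\infty$, so $\log|R_N(0)|\to 0$, squeezing the middle integral to zero. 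Since $\log|\tilde B|\leq 0$ a.e. and its integral vanishes, we conclude $\log|\tilde B(\theta)|=0$ a.e., that is, $|\tilde B(\theta)|=1$ almost everywhere on $T$.

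The main obstacle is the rigorous passage to the boundary in Jensen's inequality, since the subharmonic mean value property gives only $\log|R_N(0)|\leq \frac{1}{2\pi}\int_{-\pi}^{\pi}\log|R_N(re^{i\theta})|\,d\theta$ for each $r<1$, and one must take $r\to 1$. Because $|R_N|\leq 1$, the integrand is nonpositive, so applying Fatou's lemma to the nonnegative function $-\log|R_N(re^{i\theta})|$ (using that $R_N(re^{i\theta})\to\tilde R_N(\theta)$ a.e.) converts the interior mean value estimates into the desired boundary inequality. Apart from this standard technicality, the proof is a clean combination of factorization, Jensen, and the Blaschke summability condition.
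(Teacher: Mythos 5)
Your proof is correct, and it takes a genuinely different route from the one in the thesis. The thesis works in $L^2$: it shows by an explicit computation (using the Cauchy integral formula applied to the analytic quotient $B_n/B_m$, which gives $\frac{1}{2\pi}\int_{-\pi}^{\pi}|B_m-B_n|^2\,d\theta = 2\bigl(1-\prod_{k=m+1}^{n}|a_k|\bigr)$) that the boundary values of the partial products form a Cauchy sequence in $L^2(T)$, identifies the $L^2$ limit with the radial boundary function $B(e^{i\theta})$, and then extracts an a.e.\ convergent subsequence via the F.~Riesz theorem (Corollary 1.2.1); since each partial product has modulus $1$ on $T$, the limit does too. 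You instead use the tail products $R_N$, the sub-mean-value property of $\log|R_N|$ at the origin, and Fatou's lemma applied to $-\log|R_N(re^{i\theta})|\geq 0$ to squeeze $\int\log|\tilde B|\,d\theta$ between $\log\prod_{n>N}|a_n|$ and $0$. This is the classical ``Jensen'' proof (the one in Rudin, for instance); it buys you independence from the Hilbert-space structure of $H^2$ and from the subsequence extraction, at the cost of handling the possible $-\infty$ values of $\log|\tilde R_N|$, which your Fatou's-lemma step does correctly since the integrand has a fixed sign. The thesis's route, conversely, never touches logarithms and yields the quantitative $L^2$ estimate as a byproduct. One cosmetic caveat: your assertion that $R_N(0)=\prod_{n>N}|a_n|>0$ presupposes $a_n\neq 0$ for $n>N$; since the Blaschke condition forces at most finitely many zeros at the origin, you should simply take $N$ large enough to absorb them into $B_N$ (the paper's Definition 2.1.1 tacitly assumes $a_n\neq0$ anyway). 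This does not affect the validity of the argument.
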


\begin{proof}
Let $$B_n(z)=\prod_{k=1}^n\frac{\overline{a_k}}{|a_k|}\frac{a_k-z}{1-\overline{a_k}z}$$ be the $n$-th partial product of our Blaschke product $B(z)$. We will first show that $\lbrace B_n(e^{i\theta})\rbrace_{n=1}^\infty$ converges in $H^2$ on the boundary circle. Indeed,
\begin{align*}
\frac{1}{2\pi}\int_{-\pi}^{\pi} |B_m(e^{i\theta})-B_n(e^{i\theta})|^2d\theta=\frac{1}{2\pi}\int_{-\pi}^{\pi} [|B_m(e^{i\theta})|^2+|B_n(e^{i\theta})|^2-2Re(B_n(e^{i\theta})\overline{B_m(e^{i\theta}})]d\theta. 
\end{align*}
 Now, each finite Blaschke product has modulus $1$ everywhere on the boundary circle $T$. It follows that for all $\theta\in [-\pi,\pi]$ we have $|B_m(e^{i\theta})|=|B_n(e^{i\theta})|=1$ and $\overline{B_m(e^{i\theta})}=\frac{1}{B_m(e^{i\theta})}$, thus: $$\frac{1}{2\pi}\int_{-\pi}^{\pi} |B_m(e^{i\theta})-B_n(e^{i\theta})|^2d\theta=\frac{1}{2\pi}\int_{-\pi}^{\pi}2-2Re\frac{B_n(e^{i\theta})}{B_m(e^{i\theta})}d\theta. $$
Now we note that for $n>m$ the function $(\frac{B_n}{B_m})(z)$ is analytic, thus by Cauchy integral formula we have: $$\frac{1}{2\pi}\int_{-\pi}^{\pi}\frac{B_n(e^{i\theta})}{B_m(e^{i\theta})}d\theta=\frac{B_n}{B_m}(0)=\prod_{k=m+1}^n|a_k|$$
Therefore, it follows  $$\frac{1}{2\pi}\int_{-\pi}^{\pi} |B_m(e^{i\theta})-B_n(e^{i\theta})|^2d\theta=2(1-\prod_{m+1}^n|a_k|).$$
Now, since $\prod_{k=1}^\infty |a_k|$ converges it follows $\lim_{m\rightarrow\infty}\prod|a_k|_{k=m+1}^\infty=1$, hence the sequence $\lbrace B_k(e^{i\theta})\rbrace_{k=1}^\infty$ is clearly Cauchy in the $L^2$ norm. Since we are in $H^2(T)$ which is a well known Hilbert space, it follows that $B_n(e^{i\theta})\rightarrow B(e^{i\theta})$ in $H^2(T)$, where of course $B(e^{i\theta})$ is the extension of $B(z)$ on $T$ by its radial values which exist almost everywhere by Fatou's Theorem. We have that $$\lim_{n\rightarrow\infty}\int_{-\pi}^{\pi}|B_n(e^{i\theta})-B(e^{i\theta})|^2d\theta=0$$
Hence by {Corollary 1.2.1} there exists a subsequence of $B_n(e^{i\theta})$ that converges pointwise to $B(e^{i\theta})$ almost everywhere. Consequently $B(e^{i\theta})$ has modulus $1$ for almost $\theta\in[0,2\pi]$ and the proof is complete. 
\end{proof}

 The following theorems can be found in (\cite{HO}, p. 66-67) and they are the final ingredients to the Factorization Theorem which will follow.
\begin{thm}
Let $f$ be a non zero bounded analytic function in the unit disc. Then $f$ is uniquely expressible in the form $f=Bg$ where $B$ is a Blaschke product and $g$ a bounded analytic function without zeros.
\end{thm}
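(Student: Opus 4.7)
The plan is to build $B$ directly from the zero set of $f$ and then argue that $g=f/B$ inherits all the properties we need. First I would extract the zero sequence $\{a_n\}$ of $f$ in $D$, listed with multiplicity. If $f$ vanishes at the origin, I handle that either by peeling off a factor $z^k$ first, or by allowing $a_n=0$ in the Blaschke product (with the convention that the unimodular normalizing factor $-\overline{a_n}/|a_n|$ is replaced by $1$, so the corresponding Blaschke factor becomes just $z$). Once this is out of the way, Theorem 2.1.1 gives the Blaschke condition $\sum (1-|a_n|)<\infty$, so the Blaschke product $B(z)$ from Definition 2.1.1 is well defined and analytic on $D$.

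Next I would set $g(z)=f(z)/B(z)$ and check that all the singularities of $g$ are removable: at every point $a_n$ the numerator has a zero whose order is at least the order of the zero of $B$ at that point (by construction, exactly that order), so after cancellation $g$ extends to an analytic function on $D$, and this extension vanishes nowhere, because every zero of $f$ has been accounted for.

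The main obstacle is boundedness of $g$, which is where some care is needed. Here I would use the partial products $B_N(z)=\prod_{n=1}^{N}(-\overline{a_n}/|a_n|)(z-a_n)/(1-\overline{a_n}z)$ and set $g_N=f/B_N$, which is analytic on $D$ because $B_N$ has only finitely many zeros, all of which are zeros of $f$. Since $B_N$ is a finite Blaschke product, $|B_N(z)|\to 1$ as $|z|\to 1$, so $\limsup_{|z|\to 1}|g_N(z)|\le \|f\|_\infty$; the maximum modulus principle, applied to the bounded analytic function $g_N$, then gives $|g_N(z)|\le \|f\|_\infty$ throughout $D$. By Theorem 2.1.2, $B_N\to B$ locally uniformly on $D$, and $B$ has no zeros off $\{a_n\}$, so $g_N\to g$ locally uniformly on $D\setminus\{a_n\}$; the uniform bound passes to the limit, giving $\|g\|_\infty\le\|f\|_\infty$.

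Finally, for uniqueness, I would suppose $f=B_1g_1=B_2g_2$ are two such factorizations. The zero sequences of $B_1$ and $B_2$ must coincide (with multiplicity) with that of $f$, since $g_1,g_2$ are zero-free; by the prescribed form of the factors in Definition 2.1.1 this forces $B_1$ and $B_2$ to agree up to a unimodular constant, and hence $g_1,g_2$ also agree up to the reciprocal constant. This completes the proposed proof.
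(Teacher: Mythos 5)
Your proof is correct. The paper does not actually prove this theorem itself---it only cites Hoffman (pp.\ 66--67)---and your argument (dividing by the partial products $B_N$, bounding $|g_N|$ by $\|f\|_\infty$ via the maximum modulus principle since $|B_N|\to 1$ at the boundary, and passing to the locally uniform limit, with the standard convention for a zero at the origin) is precisely the classical proof found in that reference, so it matches the intended approach.
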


\begin{thm}
Let $g$ be an inner function without zeros, and suppose that $g(0)$ is positive. Then there is a unique singular positive measure $\mu$ on the unit circle such that $$g(z)=exp\left[-\int_0^{2\pi}\frac{e^{i\theta}+z}{e^{i\theta}-z}d\mu(\theta)\right]$$
\end{thm}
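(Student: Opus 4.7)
The plan is to exponentiate the problem into Herglotz territory. Since $g$ does not vanish on the simply connected disk $D$ and $g(0) > 0$, there is a single-valued analytic branch of $\log g$ with $\log g(0)$ real; set $h(z) = -\log g(z)$. Then $h$ is analytic on $D$, $h(0) = -\log g(0) \ge 0$ is real, and
$$u(z) := \Re h(z) = -\log|g(z)| \ge 0$$
(the inequality because $|g| \le 1$). Thus $u$ is a non-negative harmonic function on $D$, and the task reduces to representing $u$ as a Poisson integral of a positive measure $\mu$, recovering $h$, and verifying that $\mu$ is singular.

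For the Poisson integral representation, the mean value property gives
$$\int_{-\pi}^{\pi} u(re^{i\theta})\, d\theta = 2\pi u(0) \quad \text{for every } r \in [0,1),$$
so the positive measures $d\mu_r(t) := u(re^{it})\, dt$ on $T$ have uniformly bounded total mass. By Banach-Alaoglu applied in $C(T)^*$, some subsequence $\mu_{r_n}$ converges weakly-$*$ to a finite positive measure $\mu$ on $T$. Writing the Poisson formula on the disk of radius $r_n$,
$$u(\rho e^{i\theta}) = \frac{1}{2\pi} \int_{-\pi}^{\pi} P_{\rho/r_n}(\theta - t)\, d\mu_{r_n}(t),$$
and using that $P_{\rho/r_n}(\theta - \cdot) \to P_\rho(\theta - \cdot)$ uniformly on $T$, I would pass to the limit to obtain $u(\rho e^{i\theta}) = \frac{1}{2\pi} \int P_\rho(\theta - t)\, d\mu(t)$. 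Since $h$ is analytic with real part equal to this Poisson integral, $h(z) = \frac{1}{2\pi}\int \frac{e^{it} + z}{e^{it} - z}\, d\mu(t) + ic$ for some real $c$, and evaluation at $z=0$ (where both sides are real) forces $c = 0$. Absorbing $1/(2\pi)$ into $\mu$ and exponentiating yields the claimed formula for $g$.

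For the singularity of $\mu$: because $g$ is inner, $|g(e^{i\theta})| = 1$ almost everywhere on $T$, so the radial limits of $u = -\log|g|$ vanish almost everywhere. Fatou's Theorem (Theorem 1.2.5) identifies these radial limits with $2\pi\, d\mu/d\theta$ at almost every point of differentiability of $\mu$ against Lebesgue measure; hence $d\mu/d\theta = 0$ a.e., the absolutely continuous part in the Lebesgue decomposition of $\mu$ vanishes, and $\mu$ is singular. Uniqueness of $\mu$ is immediate from the Herglotz representation theorem (Theorem 1.2.6) applied to the harmonic function $u$. The main obstacle is the existence step above, because Corollary 1.2.2 supplies a Poisson integral representation only when $p > 1$, whereas here $u$ lies merely in $h^1$; the weak-$*$ compactness argument is the standard substitute, requiring nothing beyond the mean value property of $u$ and Banach-Alaoglu.
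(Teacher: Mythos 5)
Your proof is correct, and it supplies an argument the paper itself omits: the thesis states this theorem without proof, deferring to Hoffman (\cite{HO}, p.~66--67), and your route --- passing to $u=-\log|g|\ge 0$, obtaining the Poisson--Stieltjes representation of this positive harmonic function via the mean value property and weak-$*$ compactness, fixing the imaginary constant at $z=0$, and then using $|g(e^{i\theta})|=1$ a.e.\ together with Fatou's Theorem to kill the absolutely continuous part of $\mu$ --- is essentially the standard proof found in that reference. You are also right that Corollary 1.2.2 cannot be invoked directly since $u$ is only in $h^1$, and your appeal to Theorem 1.2.6 for uniqueness is exactly what that theorem provides.
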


Now we are ready to state and prove Factorization Theorem.

\begin{thm}[Factorization Theorem]
Let $f\not=0$ be an $H^1$ function in the unit disc. Then $f$ is uniquely expressible in the form $f=BSF$, where $B$ is a Blaschke product, $S$ a singular function and $F$ is an outer function (in $H^1$) .
\end{thm}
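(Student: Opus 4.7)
The plan is to chain together the factorization results already proved: first invoke the inner--outer factorization of Theorem 1.2.7 to write $f = gF$ with $g$ inner and $F$ an outer function in $H^1$, then decompose the inner factor $g$ into a Blaschke part and a zero-free inner part, and finally identify the latter as a singular function via Theorem 2.1.5.

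For the second stage, observe that $g$ is bounded analytic on $D$, so Theorem 2.1.4 gives a unique decomposition $g = Bh$, where $B$ is the Blaschke product carrying all the zeros of $g$ (equivalently, of $f$, since the outer factor $F$ is zero-free) and $h$ is a bounded analytic function with no zeros in $D$. Because the Riesz theorem (Theorem 2.1.3) ensures $|B(e^{i\theta})| = 1$ almost everywhere on $T$, and $|g(e^{i\theta})| = 1$ almost everywhere as $g$ is inner, dividing yields $|h(e^{i\theta})| = 1$ almost everywhere; hence $h$ is itself a zero-free inner function. Choose the unimodular constant $\lambda$ with $\lambda h(0) > 0$ and apply Theorem 2.1.5 to $S := \lambda h$, obtaining its canonical representation in terms of a singular positive measure on $T$. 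Absorbing the unimodular constant $\overline{\lambda}$ into the outer factor (which remains outer, as the form in Definition 1.2.6 allows arbitrary unimodular prefactors), we arrive at the desired decomposition $f = BSF$.

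For uniqueness, suppose $f = B_1 S_1 F_1 = B_2 S_2 F_2$ are two such factorizations. Each product $B_i S_i$ is an inner function, so the uniqueness clause of Theorem 1.2.7 yields $B_1 S_1 = c\, B_2 S_2$ and $F_1 = \overline{c}\, F_2$ for some $c$ with $|c| = 1$. The Blaschke factor of an inner function is determined by its zero sequence via Theorem 2.1.4, so $B_1$ and $B_2$ agree up to a unimodular constant, and the normalization underlying Theorem 2.1.5 then pins down the singular measure and hence $S_1, S_2$ up to that same constant. The anticipated main obstacle is bookkeeping rather than analysis: the three cited theorems together supply all of the heavy lifting, but each is unique only up to a unimodular scalar, and one must propagate these constants consistently through the chain so that the final identity $f = BSF$ holds exactly under a fixed normalization, rather than merely up to a constant factor.
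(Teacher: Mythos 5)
Your proposal follows essentially the same route as the paper: inner--outer factorization via Theorem 1.2.7, splitting the inner factor into a Blaschke product and a zero-free inner function via Theorem 2.1.4, normalizing by a unimodular constant so that Theorem 2.1.5 identifies the zero-free factor as a singular function, and absorbing that constant into the outer factor. Your write-up is somewhat more careful than the paper's (you explicitly check that the zero-free factor is inner via the Riesz theorem and spell out the uniqueness bookkeeping), but the argument is the same.
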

\begin{proof}
 We know by {Theorem 1.2.7} that $f=gF$, where $g$ is an inner function and $F$ is an outer function, and that this factorization is unique up to a constant of modulus $1$. If $B$ is the Blaschke product formed by the zeros of $f$, then $g=BS$, where $S$ is a zero free inner function. By multiplying $g$ with a constant of modulus $1$ we may assume without loss of generality that $S(0)>0$, and by the theorem above it follows that $S$ is a singular function. The multiplication constant can be absorbed in the outer function $F$. The proof is complete.
\end{proof}

 We saw earlier {Fatou's Theorem} which plays fundamental role in the theory of boundary behavior of analytic functions. For the purpose of what follows we will restate it in the following more compact form.

\begin{thm}[Fatou's Theorem]
Let $f\in H^\infty$. Then the radial limits of $f$ exist on $T$ except perhaps for a subset $E$ of measure zero.
\end{thm}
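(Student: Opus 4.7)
The plan is to deduce this compact statement as an immediate consequence of the general measure-theoretic Fatou's Theorem proved earlier in this chapter, together with the corollary that represents harmonic functions with $L^p$-bounded circular integrals as Poisson integrals of finite measures.

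First I would observe that an $H^\infty$ function automatically satisfies the hypothesis of that corollary: if $|f(z)|\le M$ on $D$, then
$$\int_{-\pi}^{\pi}|f(re^{i\theta})|\,d\theta \le 2\pi M$$
uniformly in $0 \le r < 1$, so in particular $f \in H^p$ for every $p \ge 1$. The corollary therefore furnishes a finite complex Borel measure $\mu$ on $T$ such that $f$ is the Poisson integral of $\mu$.

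Next I would apply the general Fatou's Theorem to this $\mu$: it guarantees that $\lim_{r \to 1} f(re^{i\theta_0}) = 2\pi\mu'(\theta_0)$ at every $\theta_0$ where $\mu$ is differentiable with respect to Lebesgue measure. The task then reduces to showing that $\mu$ is differentiable at Lebesgue-almost every point of $T$. Writing the Lebesgue decomposition $d\mu = h\,d\theta + d\mu_s$, the absolutely continuous part contributes a Radon--Nikodym density $h$ which equals $\mu'$ almost everywhere, while the singular part has Lebesgue derivative zero almost everywhere by Lebesgue's Differentiation Theorem applied to the bounded variation distribution function of $\mu_s$. Letting $E$ be the exceptional set where differentiability fails, we have $m(E)=0$, and for every $\theta \notin E$ the radial limit $\lim_{r \to 1} f(re^{i\theta})$ exists.

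The only mildly delicate step is the a.e.\ vanishing of the derivative of the singular part, which I would simply quote from Lebesgue's Differentiation Theorem rather than reprove; everything else is a direct chain of invocations of results already established in this chapter, so the statement follows with no additional work.
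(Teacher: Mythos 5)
Your proposal is correct and follows essentially the same route the paper takes: the paper presents this compact statement as a restatement of the measure-theoretic Fatou's Theorem of Chapter 1, justified by exactly the chain you describe (the $H^\infty$ bound gives the Poisson-integral representation via the corollary, and the Lebesgue decomposition plus a.e.\ differentiability of the singular part gives the full-measure set of radial limits). The only remark is that the corollary you cite already asserts the a.e.\ existence of radial limits for bounded harmonic functions, so your last two paragraphs are in effect re-deriving that corollary rather than adding new content.
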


 It is well known that the exceptional set $E$ in {Fatou's Theorem} is a $G_{\delta\sigma}$ set. But if we assume that the function $f$ has unrestricted limits at the points of the set $T\setminus E$, then $E$ necessarily becomes an $F_\sigma$ set. The converse statement was proved in \cite{Da} on the basis of the method of \cite{AD}, and the result can be formulated as follows.

\begin{thm}
Let $E$ be a set on $T$. Then there exists a function $f\in H^\infty$ which has no radial limits on $E$ but has unrestricted limit at each point of $T\setminus E$ if and only if $E$ is an $F_\sigma$ set of measure zero.
\end{thm}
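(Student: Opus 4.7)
The two directions have very different flavor: necessity reduces to Fatou's Theorem and a short topological observation, whereas sufficiency requires an explicit construction. I would handle necessity first. Assume $f\in H^{\infty}$ fails to have radial limits on $E$ and has unrestricted limits on $T\setminus E$. By Fatou's Theorem, radial limits of $f$ exist almost everywhere on $T$, so $m(E)=0$. To see that $E$ is $F_{\sigma}$, introduce the oscillation
$$\omega_{f}(\zeta)=\lim_{\delta\to 0^{+}}\sup\bigl\{\,|f(z)-f(w)|:z,w\in D,\ |z-\zeta|<\delta,\ |w-\zeta|<\delta\,\bigr\}.$$
A point $\zeta\in T$ lies in $T\setminus E$ exactly when $\omega_{f}(\zeta)=0$. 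Because $\{\zeta\in T:\omega_{f}(\zeta)<1/n\}$ is open in $T$ for each $n\geq 1$, the complement
$$T\setminus E=\{\omega_{f}=0\}=\bigcap_{n=1}^{\infty}\{\omega_{f}<1/n\}$$
is a $G_{\delta}$ subset of $T$, so $E$ is $F_{\sigma}$.

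For sufficiency, write $E=\bigcup_{n\geq 1}F_{n}$ with each $F_{n}$ closed, $m(F_{n})=0$, and $F_{n}\subset F_{n+1}$. The plan is a two-stage construction. In the first stage, for each $n$ I would produce a Blaschke product $B_{n}$ whose zero sequence accumulates exactly to $F_{n}$ and is chosen so that at every $\zeta\in F_{n}$ the radial cluster set of $B_{n}$ has diameter bounded below by a universal positive constant $c$. By Theorem 2.1.2 such $B_{n}$ has unrestricted limits of modulus one on $T\setminus F_{n}$. In the second stage I would assemble
$$f(z)=\sum_{n=1}^{\infty}\varepsilon^{n}B_{n}(z)$$
for a sufficiently small $\varepsilon\in(0,1)$; uniform convergence on $\overline{D}$ yields $f\in H^{\infty}$.

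For $\zeta\in T\setminus E$, every $B_{n}$ has an unrestricted limit at $\zeta$ (Theorem 2.1.2, since $\zeta\notin F_{n}$), and a standard split into a finite head and a geometric tail shows that $f$ inherits an unrestricted limit at $\zeta$. For $\zeta\in E$, let $n_{0}$ be the smallest index with $\zeta\in F_{n_{0}}$: each term with $n<n_{0}$ has an unrestricted (hence radial) limit at $\zeta$ and so contributes no radial oscillation; the term $\varepsilon^{n_{0}}B_{n_{0}}$ contributes radial oscillation at least $\varepsilon^{n_{0}}c$; the tail $\sum_{n>n_{0}}\varepsilon^{n}B_{n}$ is bounded in modulus by $\varepsilon^{n_{0}+1}/(1-\varepsilon)$ and so contributes radial oscillation at most $2\varepsilon^{n_{0}+1}/(1-\varepsilon)$. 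Choosing $\varepsilon$ with $2\varepsilon/(1-\varepsilon)<c$ forces strict positivity of the net radial oscillation of $f$ at $\zeta$, so $f$ has no radial limit there.

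The main obstacle is the first stage: producing Blaschke products whose radial cluster set has diameter bounded below by a single positive constant $c$ at every point of $F_{n}$, uniformly in both $\zeta\in F_{n}$ and $n$. This quantitative uniform control is the delicate analytic heart of the sufficiency, and is precisely what the construction of \cite{AD} and \cite{Da} accomplishes via a careful tangential-accumulation choice of zeros. Once such Blaschke products are in hand, the combination argument outlined above is routine.
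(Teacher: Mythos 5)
Your necessity argument is correct and is essentially the one the thesis itself uses (compare the necessity part of the proof of Theorem 2.4.6, where $E_2$ is written as the union over $n$ of the closed sets of points every neighborhood of which contains $t_1,t_2\in D$ with $|f(t_1)-f(t_2)|>1/n$): Fatou's Theorem gives $m(E)=0$, and the upper semicontinuity of the oscillation gives that $T\setminus E$ is $G_\delta$, hence $E$ is $F_\sigma$.

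The gap is in sufficiency, and it sits exactly at the step you yourself flag as ``the delicate analytic heart.'' Your entire combination argument rests on producing, for each closed $F_n$ of measure zero, a Blaschke product $B_n$ whose zeros accumulate exactly on $F_n$ and whose radial cluster set at \emph{every} point of $F_n$ has diameter at least a constant $c>0$ independent of both the point and $n$. Nothing in the proposal establishes this, and the references you invoke do not supply it: the sufficiency proved in \cite{Da}, following the method of \cite{AD}, is built on Fatou's interpolation theorem --- one takes a disc-algebra function vanishing precisely on the given closed set, with positive real part in $D$, and forms explicit bounded functions from its reciprocal; no Blaschke products and no ``tangential-accumulation choice of zeros'' appear there. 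You also cannot extract the $B_n$ from the thesis's Characterization Theorem 2.1.9, because the sufficiency proof of that theorem invokes the sufficiency of the present theorem, so that route is circular. Finally, even granting that at each individual $\zeta\in F_n$ a Blaschke product can be made to have no radial limit, the \emph{uniform} lower bound $c$ on the diameter of the radial cluster set over the (generally uncountable) set $F_n$, and over all $n$, is an additional quantitative claim requiring proof; without it the choice of $\varepsilon$ with $2\varepsilon/(1-\varepsilon)<c$ cannot be made. The assembly itself --- the head/tail splitting at points of $T\setminus E$ and the oscillation bookkeeping at points of $E$ --- is sound once such $B_n$ exist, but as written the construction of the $B_n$ is the real content of the theorem and it is missing.
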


 Note that the proof of {Theorem 2.1.8} is elementary and its sufficiency part extends a well known theorem of Lohwater and Piranian \cite{PI} which states that for any $F_\sigma\subset T$ subset of measure zero there exists a bounded analytic function which fails to have radial limits exactly on that set.
Our purpose now is to state and prove the analogue of {Theorem 2.1.8} for Blaschke products. The result is the following:

\begin{thm}[Characterization Theorem]
Let E be a set on $T$. Then there exists a Blaschke product which has no radial limits on $E$ but has unrestricted limit at each point of $T\setminus E$ if and only if $E$ is a closed set of measure zero.
\end{thm}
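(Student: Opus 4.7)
For the necessity direction, suppose $B$ is a Blaschke product as described. Since a radial limit automatically exists (and equals the unrestricted limit) wherever an unrestricted limit does, the set where $B$ fails to have a radial limit is contained in the set where $B$ fails to have an unrestricted limit; the hypothesis gives the reverse inclusion, so both sets coincide with $E$. By Theorem 2.1.2 the set of points of $T$ at which a Blaschke product fails to have an unrestricted limit is exactly the closed set of accumulation points on $T$ of its zero sequence, so $E$ is closed. On the other hand Riesz's theorem (Theorem 2.1.3) says the radial limits of $B$ have modulus $1$ almost everywhere, so in particular exist almost everywhere; hence $E$ has Lebesgue measure zero.

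For the sufficiency direction, let $E\subset T$ be closed with $m(E)=0$, and fix a countable dense subset $\{e^{i\theta_n}\}_{n\ge 1}$ of $E$. The plan is to place, along each radius to $e^{i\theta_n}$, a sequence of zeros $a_{n,k}=r_{n,k}e^{i\theta_n}$ with $r_{n,k}\nearrow 1$, and to choose the radii so sparsely that $\sum_{n,k}(1-r_{n,k})<\infty$ (for example $1-r_{n,k}\le 2^{-n-k}$), so that the associated Blaschke product $B$ converges. Since the zeros all lie on radii to points of the closed set $E$, every accumulation point on $T$ of the zero sequence belongs to $E$; conversely, density of $\{e^{i\theta_n}\}$ in $E$ together with $a_{n,k}\to e^{i\theta_n}$ for each $n$ shows that every $e^{i\theta}\in E$ is an accumulation point of that zero sequence. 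Hence by Theorem 2.1.2 the unrestricted limit of $B$ fails at exactly the points of $E$, giving the ``unrestricted limit on $T\setminus E$'' half of the conclusion.

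The delicate step, and the main obstacle, is arranging the $r_{n,k}$ so that the radial limit of $B$ actually fails at \emph{every} $e^{i\theta}\in E$, not merely at the countable dense subset $\{e^{i\theta_n}\}$. When $\theta=\theta_n$, the radius through $e^{i\theta_n}$ already contains infinitely many zeros of $B$, so $B(r_{n,k}e^{i\theta_n})=0$; spacing the $r_{n,k}$ sparsely enough forces the single factor $(a_{n,k}-z)/(1-\overline{a_{n,k}}z)$ to recover modulus arbitrarily close to $1$ between consecutive zeros while the remaining factors stay bounded below by standard Schwarz--Pick estimates, yielding $\liminf_{r\to 1}|B(re^{i\theta_n})|=0$ and $\limsup_{r\to 1}|B(re^{i\theta_n})|=1$, so the radial limit cannot exist. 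For an arbitrary $\theta$ with $e^{i\theta}\in E\setminus\{e^{i\theta_n}:n\ge 1\}$, choose a subsequence $e^{i\theta_{n_j}}\to e^{i\theta}$; by a diagonal tightening of the indices $k_j$ one arranges the zero $a_{n_j,k_j}$ to lie so close to the radius through $e^{i\theta}$ that $|B(r_j e^{i\theta})|$ is forced small along an appropriate sequence $r_j\to 1$, while Schwarz--Pick estimates keep $|B(re^{i\theta})|$ close to $1$ on suitable complementary radii. Verifying that a \emph{single} summable choice of parameters $\{r_{n,k}\}$ produces this oscillation simultaneously at every point of the possibly uncountable set $E$ is the main technical hurdle; once this is done, $\liminf_{r\to 1}|B(re^{i\theta})|<\limsup_{r\to 1}|B(re^{i\theta})|$ at every $e^{i\theta}\in E$, completing the proof.
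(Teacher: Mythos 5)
Your necessity argument is fine: it reaches the same conclusion as the paper, essentially by quoting the last sentence of Theorem 2.1.2 (the paper instead re-derives that sentence from Riesz's theorem in two steps, but the content is the same), and $m(E)=0$ follows from Fatou/Riesz exactly as you say.

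The sufficiency half has a genuine gap, and it is not merely the ``technical hurdle'' you flag at the end --- it is the entire content of the theorem, and the construction you propose cannot be completed along the lines you describe. The underlying difficulty is that accumulation of zeros at a point of $T$ does \emph{not} force the radial limit to fail there: Colwell's theorem (Theorem 3.2.1 in this thesis) produces, for any closed nowhere dense $E$, a Blaschke product whose zeros accumulate at every point of $E$ and which nevertheless has radial limits of modulus one at \emph{every} point of $T$. Since your $E$ has measure zero, Fatou's theorem gives you no failure anywhere either. So arranging $A'=E$ (which your construction does achieve, and which correctly settles the ``unrestricted limits on $T\setminus E$'' half via Theorem 2.1.2) says nothing about radial limits failing on $E$. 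At the countable dense set $\{e^{i\theta_n}\}$ you can indeed force oscillation by putting zeros on the radius, but at the uncountably many remaining points of $E$ the zeros you have placed are generically so far from the radius that Frostman's condition $\sum(1-|a_{n,k}|)/|e^{i\theta}-a_{n,k}|<\infty$ holds and the radial limit exists with modulus one; no ``diagonal tightening'' of a single summable zero sequence is known to defeat this simultaneously at every point of an arbitrary uncountable closed $E$, and you give no mechanism for it.

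The paper takes an entirely different route that avoids this obstruction. Sufficiency is obtained by (i) invoking Theorem 2.1.8 to get a function $g\in H^\infty$ (not yet a Blaschke product) with no radial limits exactly on $E$ and unrestricted limits on $T\setminus E$ --- this rests on Fatou's interpolation theorem, not on a zero construction; (ii) noting that a closed $E$ of measure zero is both $F_\sigma$ and $G_\delta$, so Berman's Theorem 2.2.1 supplies the Blaschke products $B_0,B_1$ needed as hypotheses for Nicolau's Theorem 2.2.2; and (iii) applying Nicolau's theorem to $g$ to produce a Blaschke product $I$, analytic across $T\setminus\overline{E}=T\setminus E$, with $\lim_{r\to1}[I(re^{it})-g(re^{it})]=0$ on $E$. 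Then $I$ inherits the non-existence of radial limits on $E$ from $g$ and has unrestricted limits on $T\setminus E$ by analytic continuation. If you want to salvage a direct construction you would essentially have to reprove Berman's and Nicolau's theorems, which are the deep ingredients here.
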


The proof of {Theorem 2.1.9} uses {Theorem 2.1.8} and some results on the boundary behavior of Blaschke products due to R.D. Berman \cite{BE} and A. Nicolau \cite{NI}.
\section{Some auxiliary results}

\indent \indent We follow the presentation of Nicolau \cite{NI} to formulate the results of Berman \cite{BE} and Nicolau \cite{NI}, respectively. The next theorem is due to Berman (\cite{BE}, p. 250).

\begin{thm}[R.Berman] Let $E$ be a subset of the unit circle of zero Lebesgue measure and of type $F_\sigma$ and $G_\delta$. Then there exist Blaschke products $B_0$ and $B_1$ such that:\\
(i) $B_0$ extends analytically to $T\setminus \overline{E}$ and $\lim_{r\rightarrow 1} B_0(re^{it})=0$ if and only if $e^{it}\in E$;\\
(ii) $\lim_{r\rightarrow 1} B_1(re^{it})=1$ if and only if $e^{it}\in E$.  
\end{thm}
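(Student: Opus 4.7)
The plan is to build both Blaschke products by explicit zero prescriptions, using the combined $F_\sigma$ and $G_\delta$ structure of $E$. Write $E = \bigcup_n F_n$ with closed sets $F_1 \subset F_2 \subset \cdots$ and $E = \bigcap_n U_n$ with open sets $U_1 \supset U_2 \supset \cdots$; since $E$ has measure zero, each $F_n$ can be covered by arcs of arbitrarily small total length, and each $U_n$ can be chosen with Lebesgue measure tending to $0$.

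For part (i), I would construct $B_0$ layer by layer. On the $n$-th layer, place zeros $a_{n,j} = r_n e^{i\theta_{n,j}}$, where $\{e^{i\theta_{n,j}}\}_{j=1}^{N_n}$ is an $\varepsilon_n$-dense subset of $F_n$, with radii $r_n \uparrow 1$ chosen so that $\sum_n N_n(1-r_n) < \infty$ (ensuring the Blaschke condition) and with spacing $\varepsilon_n$ much smaller than $1-r_n$, so that every point of $F_n$ sits on a radius passing pseudohyperbolically close to some $a_{n,j}$. The standard Blaschke-factor estimate $|b_a(z)| \le 2(1-|a|)/|1 - \bar a z|$ then shows the product converges uniformly on compact subsets of $D \cup (T \setminus \overline{E})$, giving the analytic extension across $T \setminus \overline{E}$, while at each point of $E$ the radial values pass arbitrarily close to zeros as $r \to 1$, forcing radial limit $0$.

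For part (ii), the plan is to combine $B_0$ with a Frostman-shift argument. By Frostman's theorem, for all $\alpha$ in $D$ outside an exceptional set of capacity zero, $(B_0-\alpha)/(1-\bar\alpha B_0)$ is again a Blaschke product, and its radial limit at each point of $E$ equals $-\alpha$. Taking a sequence $\alpha_k \in (-1,0)$ with $\alpha_k \to -1$, the corresponding shifts $B^{(k)}$ are Blaschke products whose radial limits on $E$ tend to $1$. I would assemble $B_1$ as a convergent infinite product of suitably rotated $B^{(k)}$'s, with the rotation factors and the rate $\alpha_k \to -1$ arranged so that the combined zero set still satisfies the Blaschke condition and the partial products converge radially to $1$ at each point of $E$.

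The main obstacle is the `only if' direction in both (i) and (ii): one must rule out the value $0$ (respectively $1$) as a radial limit at \emph{every} point of $T \setminus E$, not merely almost every. Here the $G_\delta$ hypothesis becomes indispensable. For each $e^{it} \notin E$ there is some $n_0$ with $e^{it} \notin U_{n_0}$, yielding a neighborhood disjoint from the zeros that belong to layers $n \ge n_0$. Combined with F.~Riesz's theorem (radial values of modulus $1$ a.e.\ on $T$) and a quantitative lower bound on $|B_0(re^{it})|$ along the radius to $e^{it}$ at levels $r$ close to $1$, this excludes the value $0$ as a radial limit there; a parallel analysis applied to $B_1$ excludes the value $1$.
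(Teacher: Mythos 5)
The paper does not prove this statement at all; it is quoted verbatim from Berman's paper with a page reference, so you are attempting to reprove a substantial known theorem from scratch, and two steps in your part (i) fail. First, placing a single layer of zeros at radius $r_n$ over an $\varepsilon_n$-net of $F_n$ only makes $|B_0(r_ne^{it})|$ small at the discrete radii $r_n$; for $r$ strictly between consecutive layers the point $re^{it}$ is pseudohyperbolically far from every zero and the modulus recovers toward $1$ (consistent with Riesz's theorem, which gives radial modulus $1$ a.e.). So your construction yields $\liminf_{r\to1}|B_0(re^{it})|=0$ on $E$, i.e. $0$ in the radial cluster set, not radial limit $0$. Forcing the limit itself to be $0$ requires zeros pseudohyperbolically close to $re^{it}$ at \emph{every} level $r$ near $1$, with the number of nearby zeros per level growing, all while preserving $\sum(1-|a_n|)<\infty$; that balancing act is the heart of the construction and is absent. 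Second, your `only if' argument rests on the claim that $e^{it}\notin U_{n_0}$ yields a neighborhood of $e^{it}$ disjoint from the high-level zeros. This is false whenever $e^{it}\in\overline{E}\setminus E$: such a point lies in the closed set $T\setminus U_{n_0}$ but every neighborhood of it still meets $E\subset U_{n_0}$ and hence contains zeros from every layer. Those are precisely the points where excluding the limit value $0$ is difficult, and an almost-everywhere statement like Riesz's theorem cannot settle the behavior at an individual such point.

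Part (ii) is unsalvageable as described. Each Frostman shift $(B_0-\alpha_k)/(1-\overline{\alpha_k}B_0)$ has radial limit $-\alpha_k$ at every point of $E$, a number of modulus $|\alpha_k|<1$; consequently any product of such factors, finite or infinite and however rotated, has radial limit of modulus $\prod_k|\alpha_k|<1$ at each point of $E$ and can never equal $1$ there. Taking $\alpha_k\to-1$ only makes this worse, since then $\prod_k|\alpha_k|=0$ unless the product is truncated. Obtaining a Blaschke product whose radial limit is exactly $1$ precisely on $E$ cannot be reduced to part (i) by M\"obius shifts; in Berman's paper it is a separate and more delicate construction, which is presumably why the thesis invokes the theorem by citation rather than reproving it.
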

 Now we formulate a theorem of Nicolau (see \cite{NI}, Proposition on p. 251).
\begin{thm}[A. Nicolau]
Let $E$ be a subset of the unit circle. Assume that there exist a Blaschke product $B_0$ that extends analytically to $T\setminus \overline{E}$ with $\lim_{r\rightarrow 1}B_0(re^{it})=0$ for $e^{it}\in E$, and an analytic function $f_1$ in the unit ball of $H^\infty$, $f_1\neq 1$, such that $\lim_{r\rightarrow 1}f_1(re^{it})=1$ for $e^{it}\in E$. Then for each analytic function $g$ in the unit ball of $H^\infty$, there exists a Blaschke product $I$ that extends analytically to $T\setminus \overline{E}$, such that $$\lim_{r\rightarrow 1}[I(re^{it})-g(re^{it})]=0 \text{ for } e^{it}\in E$$
\end{thm}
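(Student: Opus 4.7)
The plan is to construct $I$ as an infinite Blaschke product whose zero set is drawn from that of $B_0$, so that its zeros can accumulate only on $\overline{E}$ (giving the analytic extension off $\overline{E}$ automatically), and whose partial products converge uniformly on compacta of $D$ to a function additionally matching $g$ in radial-limit sense on $E$. To enforce the boundary matching, I would exploit that $1-f_1 \to 0$ radially on $E$, inserting high powers of $(1-f_1)$ as dampers in every correction term so that the sum of corrections has radial limit $0$ on $E$.

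Concretely, after the normalization $\|g\|_\infty < 1$ (which one removes at the end by taking $\lambda g$ and letting $\lambda \nearrow 1$), I would carry out an inductive construction producing a sequence of finite Blaschke products $I_n$ whose zeros are subsets of the zeros of $B_0$, together with radii $r_n \nearrow 1$ and exponents $N_n \nearrow \infty$, so that simultaneously
\begin{enumerate}
\item[(i)] $|I_n - I_{n-1}| < 2^{-n}$ on $\{|z| \le r_n\}$, ensuring uniform-on-compacta convergence of $\{I_n\}$ to an $H^\infty$ function $I$ with $\|I\|_\infty \le 1$;
\item[(ii)] the correction $I_n - I_{n-1}$ factors as $(1-f_1)^{N_n}\,\phi_n$ with $\|\phi_n\|_\infty$ controlled, so that the telescoped limit $I - I_1$ has radial limit equal to $g - I_1$ on $E$;
\item[(iii)] $I_n$ approximates $g$ closely on a neighborhood of $\{|z|=r_n\} \cap (\text{points directed at } E)$, so that combined with (ii) the telescoping forces $I - g \to 0$ radially on $E$.
\end{enumerate}
Since the zeros of $I$ are contained in the zero set of $B_0$, they satisfy the Blaschke condition and accumulate only on $\overline{E}$, so $I$ is indeed a Blaschke product with the desired analytic extension.

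The main obstacle is step (iii): producing, at each stage, a finite Blaschke product whose zeros are restricted to lie in the (discrete) zero set of $B_0$, yet which approximates a prescribed $H^\infty$ function on a given compact annulus to a prescribed accuracy. The classical density of Blaschke products in the unit ball of $H^\infty$ (uniform on compacta) allows free choice of zeros; here the constraint that zeros come from the zeros of $B_0$ must be reconciled with the approximation requirement. This is workable because the zeros of $B_0$ cluster only at $\overline{E}$, and by Theorem~2.1.10 they are sufficiently dense there to synthesize any prescribed radial boundary behavior on $E$.

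The delicate bookkeeping is to couple (i)–(iii) so that the damper $(1-f_1)^{N_n}$ does not destroy the approximation of $g$ on $\{|z| \le r_n\}$, and conversely so that the Blaschke factors introduced to achieve the approximation do not violate the damping. Resolving this tension by choosing $N_n$ and $r_n$ in the correct relative order (with $N_n$ growing fast enough that the damper is effectively $0$ on the prior annuli, and $r_n$ chosen before the zeros at stage $n$ are selected) is the genuinely technical heart of the construction; once achieved, the conclusion follows by the telescoping argument sketched above together with Fatou's theorem applied to the tail sums.
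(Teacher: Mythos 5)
This statement is not proved in the thesis at all: it is quoted verbatim as an external result of Nicolau (\cite{NI}, Proposition on p.~251), so there is no internal proof to compare against. Judged on its own merits, your sketch has genuine gaps that go beyond ``delicate bookkeeping.'' The central device --- building $I$ as a limit of finite Blaschke products whose zeros are \emph{drawn from the zero set of $B_0$} --- is both unnecessary and unworkable. It is unnecessary because analytic continuation across $T\setminus\overline{E}$ only requires the zeros of $I$ to accumulate on $\overline{E}$, not to coincide with zeros of $B_0$. It is unworkable because the finite subproducts of a fixed Blaschke product form a countable, completely rigid family determined by the prescribed points $\{a_n\}$; there is no reason such a family can approximate an arbitrary $g$ in the unit ball of $H^\infty$ on a given compact annulus, and the ``Theorem~2.1.10'' you invoke for the density of these zeros does not exist in the paper (the numbering stops at 2.1.9). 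The fact that the $a_n$ cluster on $\overline{E}$ constrains where they sit, not what the subproducts can do as functions, so step (iii) --- which you correctly identify as the heart of the matter --- is left with no support.

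Step (ii) has a second structural problem: you require the correction $I_n-I_{n-1}$, a difference of two \emph{finite Blaschke products}, to factor as $(1-f_1)^{N_n}\phi_n$. A finite Blaschke product is rigidly determined by its zeros, and the difference of two such products will in general not be divisible by the fixed $H^\infty$ function $1-f_1$; nothing in the construction creates this divisibility. The known arguments in this circle of ideas (Berman, Nicolau) proceed in the opposite order: one first builds an $H^\infty$ function with the prescribed radial behavior on $E$ by adding corrections damped by powers of $1-f_1$ \emph{at the level of general $H^\infty$ functions} (where such factorizations can be imposed by construction), and only afterwards converts the result into a genuine Blaschke product, using the hypothesis on $B_0$ together with a Frostman-type shift $u\mapsto (u-a)/(1-\bar{a}u)$; the zeros of the final product are $a$-points of an auxiliary inner function, not zeros of $B_0$. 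Without that last conversion step your scheme has no route from ``$H^\infty$ function with the right boundary behavior'' to ``Blaschke product,'' which is precisely what the theorem asserts.
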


 In this result, of course, the set $E$ in fact is of Lebesgue measure zero since if $m(E)>0$, then already the condition $\lim_{r\rightarrow 1} B_0(re^{it})=0$ for $e^{it}\in E$ will imply that $B_0\equiv 0$, which is impossible as $B_0$ is a Blaschke product.\\
As noted in (\cite{NI}, p. 251), {Theorem 2.2.1} shows that the hypothesis of {Theorem 2.2.2} are satisfied if the set $E$ is a subset of $T$ of Lebesgue measure zero and of type $F_\sigma$ and $G_\delta$.
\section{Proof of Characterization Theorem 2.1.9}
\begin{proof}
\textbf{(Necessity)} Assume that there exists a Blaschke product $B(z)$ such that it has unrestricted limits at each point of $T\setminus E$ and does not have radial limits at the points of $E$. Firstly, by {Fatou's Theorem} we have that $m(E)=0$. To prove that $E$ is closed, let $\lbrace a_n\rbrace\subset D$ be the sequence of zeros of $B(z).$ We complete the proof in two steps: \\
\underline{Step 1.} If the unrestricted limit of $B$ at $z_0\in T$ exists and is equal to $d$, then $|d|=1$. Indeed by {Theorem 2.1.3} (Riesz Theorem) in each neighborhood of $z_0$ on $T$ there exists a point $w$, at which the radial limit $B(w)$, of $B(z)$ exists and has modulus $1$. This implies $|d|=1$\\
\underline{Step 2}. The set $T\setminus E$ is open.\\
Indeed, if $z_0\in T\setminus E$, then $z_0\notin\overline{\lbrace a_n\rbrace}$ because otherwise the unrestricted limit of $B$ at $z_0$ would be equal to $0$, which contradicts step $1$. Thus, there exists an open interval $I\subset T$ such that $z_0\in I$ and $I$ is disjoint of the (closed) set $\overline{\lbrace a_n\rbrace}$. Then by Theorem 2.1.2, the Blaschke product $B(z)$ is continuous on $I$, hence it possesses unrestricted limits at the points of $I$. This means that $I\subset T\setminus E$, thus $T\setminus E$ is open. Therefore, $E$ is closed and the proof of necessity is complete.\\
\textbf{(Sufficiency)} Let $E$ be a closed set of measure zero on $T$. Since $E$ is closed , it is of type $F_\sigma$ and $G_\delta$. Thus the hypothesis of {Theorem 2.2.1} (Berman's Theorem) are satisfied for $E$. It follows that there exists Blaschke products $B_0$ and $B_1$ with properties (i) and (ii) as listed in Berman's Theorem.\\
This means that the hypothesis of {Theorem 2.2.2} (Nicolau's Theorem) indicated in the second sentence of it's formulation are satisfied. Note that the hypothesis that there exists an analytic function $f_1$ in the unit ball of $H^\infty$, $f_1\neq 1$ such that $\lim_{r\rightarrow 1}f_1(re^{it})=1$ for $e^{it}\in E$, is satisfied regardless of {Berman's Theorem}; since by Fatou's interpolation theorem such an $f_1$ exists even in the \emph{disc algebra} which consists all of continuous functions on $\overline{D}$ that are analytic in $D$.\\
By the sufficiency part of {Theorem 2.1.8}, there exists a function $g\in H^\infty$ which has no radial limits at each point of $E$ but has unrestricted limit at each point of $T\setminus E$ (obviously, $g$ can be extended continuously on the open set $T\setminus E$). By dividing to a constant if needed, we may assume that the function $g$ is in the unit ball of $H^\infty$ and thus one can apply Nicolau's Theorem. Therefore we can find a Blaschke product $I$ that extends analytically on $T\setminus E$, such that for $e^{it}\in E$ we have $$\lim_{r\rightarrow 1}[I(re^{it})-g(re^{it})]=0.$$ Since $\lim_{r\rightarrow 1}g(re^{it})$ does not exist for $e^{it}\in E$, $\lim_{r\rightarrow 1} I(re^{it})$ does not exist for $e^{it}\in E$. Since $I$ extends analytically on $T\setminus E$, it had unrestricted limits on $T\setminus E$, and the Blaschke product $I$ has the desired properties. This completes the proof of sufficiency and the theorem in proved.
\end{proof}

 Now lets formulate Step $1$ of the proof of necessity of {Theorem 2.1.9} as a proposition.
\begin{prp} If the unrestricted limit of a Blaschke product $B(z)
$ at a point $\zeta \in T$ exists and is equal to $d$, then $|d|=1$.
\end{prp}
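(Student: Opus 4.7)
The plan is to repeat the argument of Step 1 from the necessity portion of Theorem 2.1.9, isolating it as a stand-alone proposition. The key input is F. Riesz's Theorem (Theorem 2.1.3), which guarantees that the radial boundary values of any Blaschke product have modulus $1$ almost everywhere on $T$. Since the set where this holds has full Lebesgue measure, every arc of $T$ — no matter how short — must contain such a point. This gives me a reservoir of ``good'' boundary points with known modulus that I can exploit by sitting arbitrarily close to $\zeta$.

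Concretely, I would proceed as follows. Fix $\epsilon>0$. Using the hypothesis that $B$ has unrestricted limit $d$ at $\zeta$, choose an open neighborhood $V$ of $\zeta$ in the plane such that $|B(z)-d|<\epsilon$ for every $z\in V\cap D$. Then $V\cap T$ is an open arc on $T$ containing $\zeta$; by Riesz's Theorem applied to the Blaschke product $B$, the subset of $V\cap T$ where the radial limit exists and has modulus $1$ is of positive measure, so I may pick a point $w\in V\cap T$ with radial limit $B^{*}(w)$ satisfying $|B^{*}(w)|=1$. Since $V$ is open and contains $w$, for all $r$ sufficiently close to $1$ the radial point $rw$ lies in $V\cap D$, hence $|B(rw)-d|<\epsilon$. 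Letting $r\to 1^{-}$ and using the existence of the radial limit at $w$ yields $|B^{*}(w)-d|\le\epsilon$, and hence
\[
\bigl|\,1-|d|\,\bigr|=\bigl|\,|B^{*}(w)|-|d|\,\bigr|\le\epsilon.
\]
Since $\epsilon>0$ was arbitrary, $|d|=1$.

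There is no real obstacle here; the only point requiring a moment's care is the dual role played by the point $\zeta$ and the auxiliary point $w$. The unrestricted-limit hypothesis controls $B$ only near $\zeta$, while Riesz's theorem produces a point $w$ with controlled modulus of the \emph{radial} limit at $w$, not at $\zeta$. The bridge is that $w$ may be taken inside the neighborhood $V$ of $\zeta$ where $B$ is close to $d$, and then the entire radial segment $\{rw:r\ \text{near}\ 1\}$ eventually lies in $V$ as well, so the estimate $|B(rw)-d|<\epsilon$ passes to the radial limit at $w$. This is exactly the content of Step 1 in the proof of Theorem 2.1.9, now phrased without reference to the set $E$.
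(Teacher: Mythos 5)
Your argument is correct and is essentially the paper's own proof (Step 1 of the necessity part of Theorem 2.1.9), with the details that the paper leaves implicit — choosing the neighborhood $V$, locating $w$ by Riesz's Theorem, and passing the estimate $|B(rw)-d|<\epsilon$ to the radial limit — carefully written out. No changes needed.
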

 The following proof of {Proposition 2.3.1} repeats the proof of Step $1$.

\begin{proof}
If $N(\zeta)$ is any open disc centered at $\zeta$, {Theorem 2.1.3} provides the existence of a point $t\in D\cap N(\zeta)$ such that $|f(t)|$ is arbitrarily close to $1$. This implies that $|d|=1$
\end{proof} 

 Thus, {Proposition 2.3.1}(Step $1$) is a trivial corollary of {Riesz Theorem}. However, {Proposition 2.3.1} implies a result of which the original proof is not trivial yet uses Riesz Theorem as well. This may bring to the conclusion that some authors of the past working in the theory of cluster sets missed to completely appreciate the methodological power of {Riesz Theorem}.\\
\indent Let $f$ be a function defined in $D$ and let $\zeta \in T$. The cluster set $C(f,\zeta)$ of $f$ at $\zeta$, by definition, is the set of all numbers $w$ such that there exist a sequence $\lbrace z_n\rbrace$ in $D$ with the properties $\lim_{n\rightarrow\infty} z_n=\zeta$ and $\lim_{n\rightarrow\infty} f(z_n)=w$ (e.g. \cite{NO}, p. 52 or \cite{EG}).\\
H.G. Eggleston \cite{EG} considered the following example (see example (A) in \cite{EG}).\\

\noindent (A) \textit{Let f be a Blaschke product whose zeros are distributed in $D$ such that each point of $T$ is an accumulation point of zeros. Then for every point $\zeta\in T$ the cluster set $C(f,\zeta)$ of $f$ at $\zeta$ does not reduce to a single point} (see \cite{EG}, p. 140).\\

The expressions ``the cluster set at $\zeta$ reduces to a single point $d$" and ``the unrestricted limit at $\zeta$ equals $d$" are identical. However, in example (A) the number $d$, if it exists must be $0$, while $|d|=1$ by {Proposition 2.3.1}. This proves example (A).\\
\indent Eggleston \cite{EG} has even proved that for all $\zeta\in T$ the diameter of $C(f,\zeta)$ is greater than or equal to unity.
This conclusion of Eggleston has been carried further by E.F. Collingwood (See \cite{CO}, p. 378 and the last footnote on the same page). Collingwood has also dedicated another work (see \cite{COL}) to the same paper of Eggleston \cite{EG}. But the above obvious proofs have remained unnoticed (which perhaps explains why the necessity part of {Theorem 2.1.9} has never been stated and proved before).

\section{Unification problem}

 \indent In this section we start by stating some important theorems. In the theorems that follow in this section and on, we note that each function of the unit disc is extended to take the value of its radial limits on the unit circle $T$, when they exist. The following important theorem is due to S.V Kolesnikov \cite{KO}.

\begin{thm} Let $E\subset T$ be of type $G_{\delta\sigma}$ and of measure $0$. Then there exists a bounded analytic function in the unit disc $D$ which fails to have radial limits exactly at the points of $E$.
\end{thm}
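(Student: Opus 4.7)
The plan is to deduce this theorem from the sufficiency part of Theorem 2.1.8 (equivalently, from the Lohwater--Piranian construction for $F_\sigma$ sets of measure zero) through two successive reductions, each implemented by a weighted sum.

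First I would reduce to the $G_\delta$ case. Since $E$ is of type $G_{\delta\sigma}$ and has measure zero, write $E = \bigcup_{n=1}^\infty G_n$ where each $G_n \subseteq E$ is of type $G_\delta$ and hence of Lebesgue measure zero. The aim is, for each $n$, to produce a function $f_n$ in the unit ball of $H^\infty$ whose radial limits fail exactly on $G_n$, and then to take $f = \sum_{n=1}^\infty 2^{-n} f_n$. For $e^{it} \notin E$, every $f_n$ has a radial limit at $e^{it}$ and the series converges uniformly along the radius, so $f$ does too. For $e^{it} \in E$ the hard point is that the failure of at least one summand must propagate to $f$ without being cancelled by the other terms.

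Second, for a single $G_\delta$ set $G \subseteq T$ of measure zero, I would write $G = \bigcap_{k=1}^\infty U_k$ with open $U_k$ nested decreasingly and $m(U_k) < 2^{-k}$. Inside each annular layer $U_k \setminus \overline{(T \setminus U_{k+1})}$ pick a closed set $L_k$ of measure zero whose union over $k$ clusters at every point of $G$ (for example, a sequence whose accumulation points on $T$ contain $G$ and lie inside $\bigcup U_k$). Applying Theorem 2.1.8 to the $F_\sigma$ set $L_k$ produces a bounded analytic $h_k$ failing to have radial limits exactly on $L_k$; set $f_G := \sum_{k=1}^\infty 2^{-k} h_k$. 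For $e^{it} \in T \setminus G$, one has $e^{it} \notin U_{k_0}$ for some $k_0$, so $e^{it} \notin L_k$ for all $k \ge k_0$ by construction and each such $h_k$ has a radial limit at $e^{it}$; together with the finitely many $h_k$ with $k < k_0$, this gives a radial limit of $f_G$. At $e^{it} \in G$, one argues that the oscillations forced by the clustering $L_k$ prevent a radial limit.

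The main obstacle is exactly this last step and the analogous step in the outer combination: proving that non-existence of radial limits survives the weighted summations. The natural way to handle it is to strengthen the output of Theorem 2.1.8 to a quantitative form, producing each $h_k$ (and each $f_n$) whose radial oscillation at any point of its failure set has amplitude bounded below by a fixed positive constant, e.g.\ $\limsup_{r \to 1} |h_k(re^{it})| - \liminf_{r \to 1} |h_k(re^{it})| \ge 1$ for $e^{it} \in L_k$. Choosing the weights $2^{-k}$ and $2^{-n}$ so that at each failure point the dominant oscillation exceeds the geometric tail of the remaining contributions then yields the conclusion. Extracting this quantitative version from the Lohwater--Piranian construction, and arranging the geometry of the approximating sets $L_k$ so that accumulation occurs exactly at points of $G$ and nowhere else, is where the substantive work of the proof lies.
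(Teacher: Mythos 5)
There is a fatal gap, and it sits exactly where the theorem is hard. Note first that the paper does not prove this statement at all: it is Kolesnikov's theorem, quoted as a black box with a reference to his 1994 paper, so you are attempting from scratch a result whose known proof is a genuinely delicate construction. Your outer reduction (writing $E=\bigcup_n G_n$ with $G_n$ of type $G_\delta$ and summing with rapidly decreasing weights, using a uniform lower bound on the radial oscillation of each $f_n$ on $G_n$) is sound in principle, provided the weights decay fast enough that the dominant term's oscillation exceeds twice the sup-norm of the tail. The problem is the inner step, the construction for a single $G_\delta$ set $G$ of measure zero.

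Your sets $L_k$ live in the shells between consecutive $U_k$'s, so $\bigcup_k L_k$ is disjoint from $G=\bigcap_k U_k$. Consequently every $h_k$ \emph{has} a radial limit at every point $e^{it}\in G$, and since $\sum_k 2^{-k}h_k$ converges uniformly on $D$, the sum $f_G$ also has a radial limit at every point of $G$ --- the opposite of what you need. The ``oscillations forced by the clustering $L_k$'' are oscillations of $h_k$ along \emph{other} radii, near $e^{it}$ but not on the radius ending at $e^{it}$; they are irrelevant to the existence of the limit of $f_G(re^{it})$ as $r\to 1$. Clustering of failure sets can destroy unrestricted limits at a point, never radial ones. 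Nor can you repair this by exhausting $G$ from within by closed sets: a closed set of measure zero is nowhere dense, so a countable union of such sets is of first category, whereas a dense $G_\delta$ subset of $T$ of measure zero is comeager; such a $G$ therefore contains no $F_\sigma$ set of measure zero that equals it, and Theorem 2.1.8 cannot be applied inside $G$. This is precisely the obstruction that kept the characterization of sets of nonexistence of radial limits open until Kolesnikov, and it is why the intended target of a summation argument of the type you propose is the $F_\sigma$ case (Theorem 2.1.8, Lohwater--Piranian), not the $G_{\delta\sigma}$ case.
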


 It should be noted that the converse statement of {Kolesnikov's Theorem} which states:\\
\noindent The set in which the radial limits of a bounded analytic function is necessarily $G_{\delta\sigma}$, is elementary and has been known long before {Kolesnikov's Theorem} was introduced. A proof can be found in (\cite{AL}, p. 23).

\begin{thm}[Fatou's interpolation theorem, 1906] Let $K$ be a closed subset of $T$ such that $m(K)=0$. Then there exists a function $f$ in the disc algebra which vanishes precisely on $K$.
\end{thm}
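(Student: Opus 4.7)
The approach is to realize $f$ as an outer function in $H^\infty(D)$ whose boundary modulus is a continuous function on $T$ vanishing precisely on $K$, and then verify that this outer function belongs to the disc algebra. Since outer functions have no zeros in $D$, the zero set of $f$ in $\overline{D}$ will coincide with $K$.

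First I would build a continuous $\phi:T\to[0,1]$ with $\phi^{-1}(0)=K$, smooth off $K$, and $\log\phi\in L^1(T,dm)$. Using outer regularity together with $m(K)=0$, choose a decreasing sequence of open neighborhoods $V_1\supset V_2\supset\cdots$ of $K$ in $T$ with $\bigcap_n V_n=K$ and $m(V_n)<2^{-n}$; both can be arranged since $K$ is closed of measure zero (for instance, intersecting $1/k$-neighborhoods of $K$ for suitably large $k$). By the smooth Urysohn lemma on the manifold $T$, select $\psi_n\in C^\infty(T,[0,1])$ with $\psi_n\equiv 0$ on $K$ and $\psi_n\equiv 1$ on $T\setminus V_n$, and set $\phi:=\sum_{n=1}^\infty 2^{-n}\psi_n$. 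Uniform convergence gives $\phi\in C(T)$; near any point of $T\setminus K$ only finitely many $\psi_n$ differ from $1$, so $\phi\in C^\infty(T\setminus K)$. Clearly $\phi^{-1}(0)=K$. Setting $n(\zeta):=\min\{n:\zeta\notin V_n\}$ for $\zeta\notin K$ gives $\phi(\zeta)\geq 2^{-(n(\zeta)-1)}$, and the bound $m\{n(\cdot)\geq k\}\leq m(V_{k-1})<2^{-(k-1)}$ yields $\int_T n(\zeta)\,dm<\infty$, whence $\log\phi\in L^1(T)$.

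Next I would define the outer function
\[
F(z)=\exp\!\left(\frac{1}{2\pi}\int_{-\pi}^{\pi}\frac{e^{it}+z}{e^{it}-z}\log\phi(e^{it})\,dt\right),
\]
which is analytic and zero-free in $D$, bounded by $1$, with non-tangential boundary modulus equal to $\phi$ almost everywhere on $T$. I then check continuity of $F$ on $\overline{D}$. For $\zeta_0\in K$: given $M>0$, continuity of $\phi$ gives a neighborhood $N\subset T$ of $\zeta_0$ on which $\log\phi<-M$; splitting $P[\log\phi](z)$ into integrals over $N$ and $T\setminus N$, the first tends to at most $-M$ as $z\to\zeta_0$ (Poisson kernel concentration on $\zeta_0$), while the second tends to $0$ (Poisson kernel decays uniformly off $N$ because $(1-|z|^2)\to 0$), so $\log|F(z)|\to-\infty$ and $F(z)\to 0$. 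For $\zeta_0\in T\setminus K$: fix a smooth cutoff $\rho$ on $T$, equal to $1$ near $\zeta_0$ and supported in an open set where $\phi$ is smooth and positive, and write $\log\phi=h_1+h_2$ with $h_1:=\rho\log\phi\in C^\infty(T)$ and $h_2:=(1-\rho)\log\phi\in L^1(T)$ vanishing near $\zeta_0$. Then the Poisson integral of $h_1$ and its conjugate are smooth on $\overline{D}$; the Poisson integral of $h_2$ and its conjugate extend analytically across the arc near $\zeta_0$ (since the defining integrals have no singularity there). Hence $\log F$ extends continuously to $\zeta_0$, so $F$ does too with $|F(\zeta_0)|=\phi(\zeta_0)>0$.

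Combining the two cases gives $F\in A(\overline{D})$ with $F^{-1}(0)=K$. The main technical obstacle is the simultaneous requirement that $\log\phi\in L^1(T)$ and that $\phi$ be smooth enough off $K$ for the harmonic conjugate of $\log\phi$ to extend continuously to each point of $T\setminus K$; if the $\psi_n$ were merely continuous, this latter requirement could fail even though $|F|$ would still extend continuously. Choosing smooth $\psi_n$ via smooth Urysohn resolves this without spoiling the $L^1$ estimate on $\log\phi$, which depends only on the measure decay $m(V_n)<2^{-n}$.
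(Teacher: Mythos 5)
Your construction is correct, and it is worth noting that the thesis itself does not prove this theorem: it only cites the classical argument (the one ``that shows the real part of $f$ is positive on $D$''), in which one builds a continuous map $u:T\to[0,+\infty]$ that is finite off $K$, identically $+\infty$ on $K$, and integrable, forms the Herglotz integral $g$ of $u$ (so $\mathrm{Re}\,g=P[u]>0$), and takes $f=1/(1+g)$. Your route is the multiplicative twin of this: with $\phi=e^{-u}$ you package the same data as an outer function $F=\exp(P[\log\phi]+i\widetilde{P}[\log\phi])$, and the zero set is again forced to be exactly $K$ because $F$ is zero-free in $D$ and $|F|=\phi$ on $T$. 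In both versions the only genuinely delicate point is identical: continuity of the \emph{conjugate} function at points of $T\setminus K$, since continuity of the modulus alone is automatic from continuity of $\phi$ (respectively of $u$). The classical proofs secure this by making $u$ explicitly piecewise linear on the complementary arcs of $K$; you secure it by taking the Urysohn functions $\psi_n$ smooth, so that $\log\phi$ is locally $C^\infty$ off $K$ and splits as a globally smooth piece plus an $L^1$ piece vanishing near the point in question, whose Herglotz integral continues analytically across the arc. Your $L^1$ estimate via $\int_T n(\zeta)\,dm=\sum_k m\{n\ge k\}\lesssim\sum_k m(V_{k-1})<\infty$ is sound, and the argument at points of $K$ correctly uses that $\log\phi\le 0$ everywhere so the off-neighborhood part of the Poisson integral cannot push $\log|F|$ back up. The two approaches are essentially equivalent in difficulty; yours trades the explicit arc-by-arc construction for the smooth Urysohn lemma, which is a clean way to localize the conjugate-function issue, while the cited construction yields the extra conclusion $\mathrm{Re}\,f>0$ on $D$, which yours does not.
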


 A proof of {Fatou's interpolation theorem}, (that shows the real part of $f$ is positive on $D$) can be found in (\cite{KO}, p. $29-30$). A new proof may also be found in \cite{ART}. The construction of Fatou's function is also used to prove an important result by \emph{F. and M. Riesz} which states that any analytic measure is absolutely continuous with respect to the Lebesgue measure.\\
 \indent The necessity part of the following theorem is well known and its proof is elementary. The sufficiency part is proven in \cite{Da} and its proof is using the method of the paper \cite{AD} and it is based on {Fatou's interpolation theorem}.

\begin{thm}
Let $E$ be a set on $T$. Then there exists a function $f\in H^\infty$ which has no radial limits on $E$ but has unrestricted limits at each point of $T\setminus E$ if and only if $E$ is an $F_\sigma$ set of measure zero.
\end{thm}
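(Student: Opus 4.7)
\emph{Necessity.} That $m(E)=0$ is immediate from Fatou's theorem (Theorem 2.1.7). To show $E$ is of type $F_\sigma$, for each $\zeta\in T$ let $C(f,\zeta)$ denote the unrestricted cluster set of $f$ at $\zeta$; since $f$ is bounded, each $C(f,\zeta)$ is a nonempty compact subset of $\mathbb{C}$. A standard diagonal argument shows that $\zeta\mapsto C(f,\zeta)$ is upper semicontinuous in the Hausdorff sense, so $\zeta\mapsto\operatorname{diam} C(f,\zeta)$ is upper semicontinuous and
\[
\bigl\{\zeta\in T:C(f,\zeta)\text{ is not a singleton}\bigr\}=\bigcup_{k=1}^{\infty}\bigl\{\zeta\in T:\operatorname{diam} C(f,\zeta)\geq 1/k\bigr\}
\]
is a countable union of closed sets, hence $F_\sigma$. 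Under the hypothesis this set equals $E$: on $T\setminus E$ the unrestricted limit exists and pins $C(f,\zeta)$ to a single point, while on $E$ the radial limit fails and so the radial cluster set, and hence $C(f,\zeta)$, has at least two distinct points.

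\emph{Sufficiency.} Given an $F_\sigma$ set $E$ of measure zero, I would write $E=\bigcup_n F_n$ with $F_n\subseteq F_{n+1}$ closed of measure zero. For each $n$, Fatou's interpolation theorem (Theorem 2.4.2) produces a function $\phi_n$ in the disc algebra vanishing precisely on $F_n$, with $\operatorname{Re}\phi_n>0$ on $D$ and $\|\phi_n\|_\infty\leq 1$. The desired $f$ would then be built as a uniformly convergent series
\[
f(z)=\sum_{n=1}^{\infty}c_n\,\phi_n(z)\,h_n(z),
\]
where each $h_n\in H^\infty$ is a Lohwater--Piranian-style bounded analytic factor whose wild oscillation is concentrated near $F_n$ and which extends continuously to $\overline{D}\setminus F_n$. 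Since $\phi_n$ is continuous on $\overline{D}$ and vanishes on $F_n$, the product $\phi_n h_n$ extends continuously to all of $D\cup T$, taking the value $0$ on $F_n$ and having unrestricted limits everywhere else; then choosing $c_n$ to decay rapidly yields a sum $f\in H^\infty$ that has unrestricted limits at every point of $T\setminus E$.

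The main obstacle is ensuring radial limits of $f$ actually fail at \emph{every} point of $E$. For $\zeta\in F_N$ only the tail $\sum_{n\geq N}c_n\phi_n h_n$ can create oscillation, and a priori the different summands could cancel. Following the inductive strategy of \cite{AD} and \cite{Da}, I would at stage $n$ first fix $\phi_n$ and a countable dense net of radii terminating in $F_n$, then manufacture $h_n$ so that $\phi_n h_n$ has prescribed radial oscillation at least $3\varepsilon_n$ along that net for some positive sequence $\varepsilon_n$, and finally take $c_{n+1},c_{n+2},\ldots$ small enough that $\sum_{m>n}c_m\|\phi_m h_m\|_\infty<\varepsilon_n$. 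A closedness argument, relying on the fact that $F_n$ is closed and that $\phi_n h_n$ extends continuously off $F_n$, then upgrades radial-limit failure from the dense net to all of $F_n$, and so to all of $E$.
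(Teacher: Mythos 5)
Your necessity argument is correct and is essentially the argument the paper itself uses elsewhere (in the necessity part of Theorem 2.4.6): the set where the unrestricted limit fails is $\bigcup_n F_n$ with $F_n$ the closed set of points every neighborhood of which contains $t_1,t_2\in D$ with $|f(t_1)-f(t_2)|>1/n$, and under the hypotheses this set is exactly $E$; measure zero is Fatou's theorem. Your cluster-set phrasing is just a repackaging of that. Note that the paper does not actually prove Theorem 2.1.8 at all --- it quotes the sufficiency from \cite{Da} --- so the sufficiency is where your proposal has to stand on its own.

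There it has a structural flaw, not merely an unfinished induction. You take $\phi_n$ continuous on $\overline{D}$ with $\phi_n=0$ precisely on $F_n$, and $h_n\in H^\infty$, and you yourself observe that $\phi_n h_n$ then extends continuously to $D\cup T$, taking the value $0$ on $F_n$. But that means every summand $c_n\phi_n h_n$ has an unrestricted limit at \emph{every} point of $T$, and a uniformly convergent series of such functions again has unrestricted limits everywhere; indeed your $f$ lies in the disc algebra. Hence the radial limits of $f$ exist at every point of $T$, and no choice of ``wildly oscillating'' $h_n$ or of the constants $c_n,\varepsilon_n$ can repair this: multiplying by $\phi_n$ annihilates the oscillation of $h_n$ exactly on the set $F_n$ where you need it to survive. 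The tail-estimate bookkeeping you describe is the right way to pass from closed sets to an $F_\sigma$ union, but it presupposes summands $g_n\in H^\infty$ that individually have \emph{no} radial limit at each point of $F_n$ while having unrestricted limits on $T\setminus F_n$ --- i.e.\ the closed-set case of the theorem --- and producing such a $g_n$ is precisely the content of the construction in \cite{Da} and \cite{AD}. There the Fatou function $\omega_n$ is used in the opposite way from yours: one composes $\omega_n$ with a bounded analytic function that oscillates without limit as its argument tends to $0$, so that the composite is continuous on $\overline{D}\setminus F_n$ yet fails to have a radial limit exactly where $\omega_n$ vanishes. As written, your Ansatz constructs a function with the negation of the required boundary behavior on $E$.
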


 The following theorem is a solution to a problem proposed by Rubel in 1973 and was solved in \cite{DA} with an affirmative answer to the problem. The problem asks whether for any $G_\delta$ set $F\subset T$ of measure zero, there exists a non vanishing bounded analytic function on $D$ such that $f=0$ precisely of $F$ and such that the radial limits of $f$ exist everywhere on $T$. The following stronger theorem found in \cite{art} provides an affirmative answer to Rubel's problem as well.

\begin{thm} Let $F$ be a $G_\delta$ of measure zero on $T$. Then there exists a nonvanishing bounded analytic function $g$ such that:\begin{enumerate}
 \item $g$ has non zero radial limits everywhere on $T\setminus F$.
 \item $g$ has vanishing unrestricted limits at each point of $F$.
 \end{enumerate}
\end{thm}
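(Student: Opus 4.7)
The plan is to construct $g$ in the exponential form $g(z)=e^{-h(z)}$, where $h$ is analytic in $D$ with $\operatorname{Re} h\ge 0$; this automatically makes $g$ nonvanishing and bounded by $1$ in $D$, and the boundary behavior will be engineered through a careful choice of $h$ assembled from disc-algebra building blocks produced by Fatou's interpolation theorem (Theorem 2.4.2).

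Since $F$ is a $G_\delta$ of measure zero, write $F=\bigcap_{n=1}^\infty G_n$ with $G_n$ open in $T$, $G_{n+1}\subset G_n$, and $m(G_n)<2^{-n}$. For each $n$, apply the strengthened Fatou interpolation theorem to a closed measure-zero set $K_n$ adapted to $G_n$ to obtain $f_n$ in the disc algebra with $f_n$ vanishing exactly on $K_n$, $\operatorname{Re} f_n>0$ on $D$, and $\|f_n\|_\infty\le 1$. Since $f_n$ is continuous on $\overline D$, one has $\log(1/|f_n(z)|)\to+\infty$ unrestrictedly as $z\to\zeta\in K_n$; choosing summable positive coefficients $\lambda_n$ small enough to ensure uniform convergence on compact subsets of $D$, set $h(z)=\sum_n\lambda_n\log(1/f_n(z))$ (principal branch, well defined since $\operatorname{Re} f_n>0$) and $g=e^{-h}$. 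If the $K_n$'s can be arranged inside $F$ with $F=\limsup_n K_n$, then at each $\zeta\in F$ infinitely many summands have real part diverging unrestrictedly, forcing $\operatorname{Re} h\to+\infty$ unrestrictedly and $g(z)\to 0$ unrestrictedly, while at each $\zeta\in T\setminus F$ the series has a finite radial limit (by summability of $\{\lambda_n\}$ together with Theorem 1.2.7 and Corollary 1.2.1), giving $g$ a nonzero radial limit.

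The central obstacle is the selection of the $K_n$. If one insists on $K_n\subset F$ for all $n$, which is necessary to avoid $g$ acquiring unwanted boundary zeros outside $F$, then $\bigcup K_n\subset F$ is an $F_\sigma$ subset of $F$; but $F=\limsup K_n\subset\bigcup K_n$ would then force $F$ itself to be $F_\sigma$, which fails for a generic $G_\delta$ set of measure zero (such as the intersection of $T$ with an irrational-type $G_\delta$). Conversely, allowing the $K_n$'s to stick outside $F$ introduces zeros of $g$ on $\bigcup K_n\setminus F\subset T\setminus F$, contradicting the required non-vanishing of radial limits there. Resolving this tension is the technical heart of the proof and is where the construction in \cite{art} must do the substantive work: presumably by replacing the naive logarithmic aggregation with a more limsup-sensitive scheme in which a single vanishing $f_n$ at a non-$F$ point is dampened and only genuinely infinite repetition at points of $F$ drives $g$ to zero, or by exploiting the $F_{\sigma\delta}$ representation of the $G_\delta$ set $F$ in a more refined way so that the nested open cover $\{G_n\}$ and the Fatou construction cooperate to achieve both unrestricted vanishing precisely on $F$ and nonzero radial limits on the entire complement.
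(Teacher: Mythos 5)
There is a genuine gap here, and you have in fact located it yourself: the final paragraph of your argument is not a proof but an acknowledgment that the decisive step is missing. Note first that the thesis itself does not prove this statement; it is quoted from the literature (\cite{art}, \cite{DA}), so the only question is whether your construction stands on its own. It does not, for two reasons. First, as you observe, any scheme that aggregates Fatou interpolation functions $f_n$ vanishing on closed sets $K_n\subset F$ can only force $g\to 0$ on the $F_\sigma$ set $\bigcup_n K_n$ (or a subset of it), and a general $G_\delta$ set of measure zero is not $F_\sigma$; while allowing $K_n\not\subset F$ plants boundary zeros in $T\setminus F$. Second, and independently, your claim that $h=\sum_n\lambda_n\log(1/f_n)$ has a finite radial limit at \emph{every} point of $T\setminus F$ is unsupported: summability of the $\lambda_n$ makes $\operatorname{Re}h$ a positive harmonic function (a Poisson integral of a finite measure), which by Fatou's Theorem has finite radial limits only almost everywhere; Theorem 1.2.7 and Corollary 1.2.1 do not upgrade this to every point of $T\setminus F$, and if the sets $K_n$ accumulate at a point $\zeta\notin F$ the tail of the series can genuinely blow up along the radius at $\zeta$.

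The repair is to abandon the countable aggregation of closed-set interpolants and instead build a single boundary weight adapted to the $G_\delta$ structure. Writing $F=\bigcap_n G_n$ with $G_n$ open, decreasing, and $m(G_n)<2^{-n}$, one constructs a continuous extended-real-valued function $w:T\to[0,+\infty]$ with $w^{-1}(+\infty)=F$ and $\int_T w\,dm<\infty$ (such a $w$ exists precisely because $F$ is $G_\delta$, and the integrability uses $m(F)=0$ through the smallness of the $m(G_n)$; this is the natural generalization of the weight in Fatou's interpolation construction from closed sets to $G_\delta$ sets). Taking $h$ to be the Herglotz integral of $w$ and $g=e^{-h}$, the continuity and finiteness of $w$ at each $\zeta\in T\setminus F$ gives an unrestricted, hence radial, limit $\operatorname{Re}h(z)\to w(\zeta)<\infty$, so $g$ has a nonzero radial limit there; lower semicontinuity of $w$ gives $\liminf_{z\to\zeta}\operatorname{Re}h(z)=+\infty$ unrestrictedly at each $\zeta\in F$, so $g$ has vanishing unrestricted limits on $F$; and $g=e^{-h}$ is nonvanishing and bounded by $1$ since $\operatorname{Re}h\ge 0$. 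Your exponential framework is the right one, but without this single-weight construction (or an equivalent device) the argument does not close.
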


 Now we use the three theorems above to solve a particular case of Problem $1$ stated in \cite{Da}. Although the problem stated in \cite{Da} is still an open problem and a good research opportunity, we have solved several particular cases of it in this thesis. The problem basically is an attempt to unify both {Kolesnikov's Theorem} and {Theorem 2.4.3} to a single stronger theorem. The problem we are interested in (See [11], p. 4) is the following:\vspace{0.12in}

\noindent\textbf{Unification Problem.}
\emph{Let $E_1\subset E_2$ be subsets of the unit circle $T\equiv\lbrace z\mid |z|=1 \rbrace$. Find necessary and sufficient conditions for the existence of an $f\in H^\infty$ such that the radial limits of $f$ fail to exist precisely on $E_1$ and it has unrestricted limits precisely at $T\setminus E_2$.}\\

 The case $E_1=\emptyset$ is also dealt with by the following theorem.

\begin{thm}[Brown, Gauthier and Hengartner]
Let $E$ be a set on $T$. Then there exists a function $f\in H^\infty$ which has no unrestricted limits on $E$ but has unrestricted limits at every point of $T\setminus E$ if and only if $E$ is of type $F_\sigma$.
\end{thm}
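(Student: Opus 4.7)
The plan is to handle necessity by a standard oscillation argument and sufficiency by building $f$ as a weighted sum of Blaschke products, one per closed piece of an $F_\sigma$ decomposition of $E$, with weights decaying fast enough that cluster-set contributions cannot cancel.

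For necessity, I would let $U(f) = \{\zeta \in T : f \text{ has an unrestricted limit at } \zeta\}$ and for each positive integer $n$ set
$$V_n = \{\zeta \in T : \exists\, \delta > 0 \text{ with } \operatorname{diam} f(D \cap N(\zeta, \delta)) < 1/n\},$$
where $N(\zeta, \delta)$ denotes the open Euclidean disc of radius $\delta$ about $\zeta$. A direct shrinking-$\delta$ argument shows each $V_n$ is open in $T$, and the Cauchy criterion identifies $U(f)$ with $\bigcap_n V_n$. Hence $U(f)$ is $G_\delta$ and $E = T \setminus U(f)$ is $F_\sigma$.

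For sufficiency, write $E = \bigcup_{n=1}^\infty F_n$ with each $F_n$ closed. For each $n$ I would take a Blaschke product $B_n$ whose zero sequence has accumulation set on $T$ exactly $F_n$; this is readily arranged for any closed $F_n$ of arbitrary Lebesgue measure by placing zeros $(1 - 2^{-k})\zeta_{n,k}$ along a suitably dense enumeration $\{\zeta_{n,k}\}$ of $F_n$ so as to satisfy the Blaschke condition. By Theorem 2.1.2, $B_n$ extends continuously to $\overline{D} \setminus F_n$ and fails to possess an unrestricted limit at each $\zeta \in F_n$. Moreover, at each such $\zeta$ the cluster set $C(B_n, \zeta)$ contains $0$ (from the accumulating zeros) and a value of modulus $1$: by Theorem 2.1.3, radial limits of $B_n$ have modulus $1$ at almost every boundary point, and such points can be approached arbitrarily close to $\zeta$ through $D$.

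I would then set $f = \sum_{n=1}^\infty \varepsilon_n B_n$ with $\varepsilon_n = 4^{-n}$, so that $f \in H^\infty$ and $2 \sum_{m > n} \varepsilon_m = \tfrac{2}{3}\varepsilon_n < \varepsilon_n$ for every $n$. At any $\zeta \in T \setminus E$, each $B_n$ is continuous at $\zeta$, and uniform convergence hands $f$ an unrestricted limit at $\zeta$. At any $\zeta \in E$, let $n_0 = \min\{n : \zeta \in F_n\}$, and by a diagonal extraction obtain sequences $z_k, z_k' \to \zeta$ in $D$ along which every $B_m$ converges, with $B_{n_0}(z_k) \to 0$ (pick $z_k$ among the zeros of $B_{n_0}$) and $|B_{n_0}(z_k')| \to 1$ (via the nearby modulus-one values). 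For $m < n_0$ both sequences force $B_m$ to its continuous value $B_m(\zeta)$, and for $m \geq n_0$ the bound $|B_m| \leq 1$ gives $|\lim B_m(z_k) - \lim B_m(z_k')| \leq 2$. Consequently
$$\left|\lim_k f(z_k) - \lim_k f(z_k')\right| \geq \varepsilon_{n_0} - 2\sum_{m > n_0} \varepsilon_m > 0,$$
so the cluster set of $f$ at $\zeta$ contains at least two points and no unrestricted limit exists. The main obstacle is precisely this last estimate: at a point $\zeta$ lying in infinitely many of the $F_m$, the cluster sets of many summands overlap unpredictably, and the role of the exponential weight decay is to isolate the definite oscillation produced by $B_{n_0}$ from any cancellation coming from the higher-index tails.
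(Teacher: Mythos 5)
Your proof is correct and follows essentially the same route the paper sketches for this theorem: decompose $E=\bigcup_n C_n$ into closed sets, take a Blaschke product with zeros accumulating exactly on each $C_n$, and form a weighted sum, with necessity handled by the standard open-sets-of-small-oscillation argument that the paper also uses in Theorem 2.4.6. Your one substantive deviation is the choice of weights $4^{-n}$ in place of the paper's $2^{-n}$, and it is actually needed: with $2^{-n}$ the tail bound $2\sum_{m>n_0}2^{-m}=2^{-n_0+1}$ exceeds the oscillation $2^{-n_0}$ contributed by $B_{n_0}$, so the isolation estimate at the end of your argument would fail as written for the paper's sketched weights.
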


 The proof of the theorem above is not difficult. The idea is to express $E$ as a countable union of closed sets $E=\bigcup_n C_n$, and then for each of the sets $C_n$ pick a respective Blaschke product $B_n$ with zeros accumulating precisely on $C_n$. Then, since Blaschke products are bounded by modulus by $1$, the infinite sum $\sum_{n=1}^\infty \frac{B_n}{2^n}$ converges to the function with the desired properties.\vspace{0.12in}

 Now we will proceed and solve some particular cases of the Unification Problem. In the proofs that follow, we note that each function of the unit disc is extended to take the value of its radial limits on the unit circle $T$, when they exist.

\begin{thm} Let $E_1\subset E_2$ be subsets of the unit circle $T\equiv\lbrace z\mid |z|=1 \rbrace$ and suppose that the measure of $E_2$ is $2\pi$. The following conditions are necessary and sufficient for the existence of an $f\in H^\infty$ such that the radial limits of $f$ fail to exist precisely at $E_1$ and it has vanishing unrestricted limits precisely at $T\setminus E_2$:
\begin{enumerate}
\item $E_1$ is of type $G_{\delta\sigma}$ and of measure zero.
\item $E_2$ is of type $F_\sigma$.
\end{enumerate}
\end{thm}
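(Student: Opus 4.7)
The plan is to handle necessity by elementary descriptive-set-theoretic observations on the exception sets (using Fatou's theorem and the known $G_{\delta\sigma}$ structure of radial-limit exception sets cited after Theorem 2.4.1), and to handle sufficiency by multiplying together two carefully chosen $H^\infty$ functions: one supplied by Theorem 2.4.4 that controls the locus of vanishing unrestricted limits, and one supplied by Kolesnikov's Theorem 2.4.1 that controls the radial-limit exception set, after shifting the second factor away from the origin by an additive constant.

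For necessity, assume $f\in H^\infty$ has radial-limit exception set exactly $E_1$ and vanishing-unrestricted-limit set exactly $T\setminus E_2$. Fatou's theorem together with the elementary converse of Kolesnikov's theorem mentioned in the paragraph after Theorem 2.4.1 show that $E_1$ is automatically $G_{\delta\sigma}$ and of Lebesgue measure zero. For $E_2$, unwinding the definition of unrestricted limit yields
$$T\setminus E_2 \;=\; \bigcap_{n=1}^\infty U_n, \qquad U_n = \{\zeta\in T : \text{there is an open disc } N\ni\zeta \text{ with } |f(z)|<1/n \text{ for all } z\in N\cap D\}.$$
Each $U_n$ is relatively open in $T$, so $T\setminus E_2$ is $G_\delta$ and consequently $E_2$ is $F_\sigma$, as required.

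For sufficiency, since $E_2$ is $F_\sigma$ with $m(E_2)=2\pi$, the complement $T\setminus E_2$ is a $G_\delta$ subset of $T$ of Lebesgue measure zero. Applying Theorem 2.4.4 with $F = T\setminus E_2$ produces a nonvanishing bounded analytic function $g$ on $D$ with nonzero radial limits everywhere on $E_2$ and vanishing unrestricted limits at every point of $T\setminus E_2$. Applying Kolesnikov's theorem to $E_1$ (which is $G_{\delta\sigma}$ and of measure zero) produces $h\in H^\infty$ whose radial limits fail precisely on $E_1$; after rescaling we may assume $\|h\|_\infty\le 1$. I claim that the product
$$f \;=\; g\,(h+2)$$
has the desired properties.

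The verification splits into three cases. On $T\setminus E_2$ the factor $h+2$ is bounded by $3$ while $g$ vanishes unrestrictedly, so $f$ has vanishing unrestricted limit. On $E_1\subseteq E_2$, the function $g$ has some nonzero radial limit $a$, while $h+2$ inherits the failure of radial limits from $h$; if $f$ had a radial limit $L$ there, then $f/g = h+2$ would have radial limit $L/a$, contradicting the choice of $h$, so radial limits of $f$ fail precisely on $E_1$. The only delicate case is $E_2\setminus E_1$: here both $g$ and $h$ possess radial limits $a\neq 0$ and $b$ with $|b|\le 1$, and the shift by $2$ ensures $|b+2|\ge 1$, so the radial limit of $f$ equals $a(b+2)$ and has modulus at least $|a|>0$, which rules out a vanishing unrestricted limit. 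This last step is the main, but mild, obstacle: for the naive product $gh$, the radial limit on $E_2\setminus E_1$ could vanish whenever $h$ happens to have a zero radial limit there, potentially giving $f$ an unintended vanishing unrestricted limit and violating the ``precisely on $T\setminus E_2$'' clause. Shifting $h$ by any constant strictly larger than $\|h\|_\infty$ rigidly prevents this and closes the argument.
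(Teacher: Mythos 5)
Your proof is correct and follows essentially the same route as the paper: necessity via Fatou's theorem, the known $G_{\delta\sigma}$ structure of the radial-limit exception set, and a descriptive-set argument for $E_2$; sufficiency by multiplying the function from Theorem 2.4.4 (applied to the measure-zero $G_\delta$ set $T\setminus E_2$) with the Kolesnikov function shifted by a constant so that it is bounded away from zero. Your version is slightly more explicit than the paper's where it matters --- in writing $T\setminus E_2=\bigcap_n U_n$ with $U_n$ open (the paper instead describes $E_2$ as a union of closed oscillation sets) and in verifying the case $E_2\setminus E_1$, which the paper compresses into ``by adding a suitable constant'' --- but these are refinements of the same argument, not a different one.
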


\begin{proof}
\textbf{(Necessity)} It is well known that the exceptional set of points in $T$ such that $f$ fails to have radial
 limits is of type $G_{\delta\sigma}$ and by {Fatou's Theorem} is of measure zero. 
 Also if $f$ has unrestricted limits on $T\setminus E_2$ then $E_2=\bigcup_{n=1}^{\infty} F_n$ where $F_n$ are the closed sets consisting 
 of all points in $E_2$ any neighborhood of which contains points $t_1,t_2\in D$ such that $|f(t_1)-f(t_2)|>\frac{1}{n}$. Since clearly each $F_n$ is closed it follows that $E_2$ is of type $F_{\sigma}$ and the proof of necessity is complete.\\
\textbf{(Sufficiency)} Since $E_1$ is of type $G_{\delta\sigma}$ and of measure zero by {Kolesnikov's Theorem} there exists a bounded analytic function $f_1$ that fails to have radial limits exactly on $E_1$. By adding a suitable constant we may assume that $f_1$ does not have vanishing radial limits anywhere on $T$.\\
 Now since $E_2$ is $F_\sigma$ it follows $T\setminus E_2$ is of type $G_\delta$ and therefore by {Theorem 2.4.3} there exists a non zero bounded analytic function $f_2$ such that $f_2=0$ exactly on $T\setminus E_2$ and such that the radial limits of $f_2$ exist at all points of $T$.
The function $F\equiv f_1f_2$ satisfies the conditions of the theorem hence the proof is complete. 
\end{proof}

 Now, before we move on to solve another more complex case, let us recall some fundamental results due to Rene-Louis Baire.\\
 Recall that \emph{Baire's Theorem} (See \cite{S}, p. 986) asserts that every function $f:X\rightarrow Y$, taking values from a complete metric space $X$ to a Banach space $Y$ and of the \emph{Baire first class}; that is, an $f$ which is the pointwise limit of a sequence of continuous functions, has the following property: The points of continuity of $f$ are dense in $X$. \\
 From that, it follows that the points of discontinuity of $f$ are nowhere dense in $X$. This fact will also be used later to simplify a proof of a theorem found in \cite{PE}.

\begin{thm} Let $E_1\subset E_2$ be subsets of the unit circle $T\equiv\lbrace z\mid |z|=1 \rbrace$. Moreover assume that $E_1$ is closed and that $E_1$ and $E_2\setminus E_1$ are separated. The following conditions are necessary and sufficient for the existence of a Blaschke product $B(z)$ such that the radial limits of $B(z)$ fail to exist precisely at $E_1$ and it has unrestricted limits precisely at $T\setminus E_2$:\begin{enumerate}
\item $E_1$ is of measure zero.
\item $E_2$ is closed and nowhere dense.
\end{enumerate}
\end{thm}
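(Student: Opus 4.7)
The argument extends the pattern of Theorem 2.1.9, with a new Baire-category ingredient handling the nowhere-density claim.

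\textbf{Necessity.} That $m(E_1)=0$ is immediate from Fatou's Theorem 2.1.7, and that $E_2$ is closed follows by the standard observation that if $B$ has unrestricted limit $\ell$ at $z_0$, then a small disc around $z_0$ confines $B$ near $\ell$, forcing unrestricted limits at every nearby boundary point; hence $T\setminus E_2$ is open. The substantive claim is that $E_2$ is nowhere dense. Suppose toward a contradiction that an open arc $I\subset E_2$ exists. Since $E_2$ is closed and $E_1,\,E_2\setminus E_1$ are separated, $F:=E_2\setminus E_1$ is closed and disjoint from $E_1$. As $I$ is connected and $I\subset E_1\cup F$, we have $I\subset E_1$ or $I\subset F$; the first is excluded since $m(E_1)=0<m(I)$, so $I\subset F$, giving $I\cap E_1=\emptyset$ and the existence of the radial limit function $B^{*}$ on all of $I$. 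Being the pointwise limit of the continuous functions $\theta\mapsto B(re^{i\theta})$, $B^{*}$ is of Baire class $1$, and by Baire's theorem its continuity points are dense in $I$. Pick such a $z_0$ in the interior of $I$. A whole neighborhood of $z_0$ in $T$ lies in $I\subset T\setminus E_1$, so $B^{*}$ is continuous at $z_0$ as a function on $T$. Since $B$ is the Poisson integral of $B^{*}$ (Corollary 1.2.2) and the Poisson kernel is an approximate identity, continuity of $B^{*}$ at $z_0$ forces $B$ to extend continuously to $z_0$, yielding an unrestricted limit there and contradicting $z_0\in E_2$.

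\textbf{Sufficiency.} I would take $B=B_1B_2$. For $B_1$, Theorem 2.1.9 applied to the closed, measure-zero set $E_1$ yields a Blaschke product with no radial limits precisely on $E_1$ and unrestricted limits (of modulus $1$) everywhere on $T\setminus E_1\supset F:=E_2\setminus E_1$. For $B_2$, I need a Blaschke product whose zeros accumulate on $T$ exactly at the closed, nowhere-dense set $F$, and for which the radial limits exist at \emph{every} point of $F$; by Theorem 2.1.2 the accumulation condition then automatically gives unrestricted limits of modulus $1$ on $T\setminus F$ and no unrestricted limits on $F$. The product $B=B_1B_2$ has the required behavior: on $T\setminus E_2$ both factors have unrestricted limits; on $E_1\subset T\setminus F$, $B_2$ has a nonzero unrestricted limit while $B_1$ has no radial limit, so $B$ has no radial limit; on $F\subset T\setminus E_1$, $B_1$ has a nonzero unrestricted limit while $B_2$ has a radial limit without an unrestricted limit, so $B$ inherits exactly this behavior.

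\textbf{Main obstacle.} The technical core is the construction of $B_2$ having radial limits at \emph{every} (not merely almost every) point of $F$, since $F$ may be uncountable and even of positive measure. I would distribute the zeros on tangential paths approaching $F$ from within $D$: using the countable dense set of endpoints of the components of $T\setminus F$ (dense in $F$ by nowhere-density), place sequences of zeros $a_{k,n}$ clustering at each endpoint along Stolz-avoiding curves, with $1-|a_{k,n}|$ decaying rapidly enough (e.g.\ doubly exponentially) that a Frostman/Cargo-type convergence condition of the form $\sum (1-|a_{k,n}|)/|e^{i\theta}-a_{k,n}|<\infty$ holds for every $e^{i\theta}\in F$. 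The resulting Blaschke product accumulates exactly on $F$, has every zero approaching $F$ only tangentially, and has radial limits existing throughout $F$, completing the construction.
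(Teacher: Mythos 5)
Your overall architecture matches the paper's proof of this theorem: necessity of nowhere-density via Baire's theorem applied to the radial-limit function on the part of $T$ where it exists everywhere, and sufficiency via the product of the Blaschke product from Theorem 2.1.9 (for $E_1$) with a second Blaschke product whose zeros accumulate exactly on $E_2\setminus E_1$ and which has radial limits at every point of $T$. Your necessity argument is in fact organized a bit more cleanly (a direct contradiction on a single arc $I\subset E_2$, using connectedness to force $I\subset E_2\setminus E_1$, then Baire plus the Poisson-integral continuity theorem), whereas the paper exhausts $T\setminus E_1$ by the closed sets $T\setminus O_n$ and takes a countable union; both are valid. One genuine flaw: your justification that $E_2$ is closed --- ``a small disc around $z_0$ confines $B$ near $\ell$, forcing unrestricted limits at every nearby boundary point'' --- does not work. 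Confinement of $B$ near $\ell$ only shrinks the cluster sets at nearby points; it does not make the limits exist there. The correct argument (Steps 1--2 of the paper's proof of Theorem 2.1.9) is that an unrestricted limit at $z_0$ must have modulus one by Riesz's theorem, hence $z_0$ is not in the closure of the zero set, hence $B$ extends analytically across an arc around $z_0$ and has unrestricted limits there; so $T\setminus E_2$ is open.

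On sufficiency, the object you label the ``main obstacle'' --- a Blaschke product with $A'=E_2\setminus E_1$ closed and nowhere dense, radial limits (of modulus one) at every point of $T$, and unrestricted limits exactly off $A'$ --- is precisely the sufficiency part of Colwell's Theorem 3.2.1, which the paper simply invokes; you do not need to rebuild it. If you do insist on rebuilding it, your sketch is not yet a proof: placing zeros tangentially over the countable set of endpoints of the complementary arcs and checking Frostman's condition there does not control the Frostman sum at the remaining (possibly uncountable, possibly positive-measure) points of $E_2\setminus E_1$, and arranging $\sum(1-|a_{k,n}|)/|e^{i\theta}-a_{k,n}|<\infty$ simultaneously for \emph{every} $e^{i\theta}$ in the set is exactly the delicate part of Colwell's construction. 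Your verification of the boundary behavior of the product $B_1B_2$ on the three pieces $E_1$, $E_2\setminus E_1$, $T\setminus E_2$ is correct and agrees with the paper's.
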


\begin{proof}
\textbf{(Necessity.)} The proof that $E_1$ is of measure zero follows from {Fatou's Theorem} and that $E_2$ is closed follows since $E_2$ is clearly the accumulation of zeros of $B(z)$. Moreover, since $E_2$ and $E_1$ are closed and separated by hypothesis, it follows that $E_2\setminus E_1$ is closed as well. Now to prove that $E_2$ is nowhere dense, we consider the closed sets $T\setminus O_n$, where $$O_n\equiv \bigcup_{e^{i\theta}\in E_1}(e^{i(\theta-\frac{1}{n})},e^{i(\theta+\frac{1}{n})}).$$  Note that for all $n$, the radial function $B(e^{i\theta})$ is of the Baire first class on the closed set $T\setminus O_n$. The points of discontinuity of our function $B(e^{i\theta})$ restricted on $T\setminus O_n$ is the set $E_2\setminus O_n$. It follows by Baire's theorem that the set $E_2\setminus O_n$ is nowhere dense on $T\setminus O_n$. Since $E_2\setminus O_n$ is closed, it follows that it is actually nowhere dense in the standard topology on $T$ as well. Therefore the closed set $E_2\setminus E_1=\bigcup_{n=1}^\infty E_2\setminus O_n$ is nowhere dense as well. Lastly, since $E_1$ is a closed set of measure zero, of course it is nowhere dense. Now notice that $E_2= (E_2\setminus E_1)\cup E_1$, hence it follows that $E_2$ is nowhere dense and the proof of necessity is complete.\\
\textbf{(Sufficiency.)} Since $E_1$ is a closed set we know by 
{Theorem 2.1.9} there exists a Blaschke product $B_1(z,A_1)$ whose radial limits fail to exist exactly on ${A_1}^{'}=E_1$ and it's 
unrestricted limits exist everywhere on $T\setminus E_1$. Now since $E_2\setminus E_1$ is closed and nowhere dense, by {Theorem 3.4.1} (\emph{Colwell's Theorem} which is introduced in the next Chapter) there exist a Blaschke product $B_2(z,A_2)$ with ${A_2}^{'}=E_2\setminus E_1$ and such that $B_2(z,A_2)$ has radial limits everywhere on $T$ and has unrestricted limits exactly only on $T\setminus (E_2\setminus E_1)$. Then the Blaschke product $B(z,A_1\cup A_2)$ satisfies the conditions of the theorem and the proof is complete.
\end{proof}

 Now we note another special case of the Unification problem that can also be solved for Blaschke products provided we assume that the set $E_2$ is of measure zero. To prove this special case we use the following theorem found in \cite{NI}.

\begin{thm} Let $E$ be a subset of the unit circle of zero Lebesgue measure and of type $F_\sigma$ and $G_\delta$. Let $\phi$ be a function defined on $E$ with $\sup\lbrace |\phi(e^{it})|:e^{it}\in E\rbrace\leq 1$ and such that for each open set $U$ in the complex plane, $\phi^{-1}(U)$ is of type $F_\sigma$ and $G_\delta$. Then there exists a Blaschke product $I$ extending analytically on $T\setminus \overline{E}$ such that $$\lim_{r\rightarrow 1} I(re^{it})=\phi(e^{it})\text{ for }e^{it}\in E.$$
\end{thm}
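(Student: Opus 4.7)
The plan is to reduce the statement to Nicolau's Theorem 2.2.2 by constructing an auxiliary function $g$ in the unit ball of $H^\infty$ whose radial limits on $E$ coincide with $\phi$. Once such $g$ is in hand, the two hypotheses of Theorem 2.2.2 are met: Berman's Theorem 2.2.1, applicable since $E$ is $F_\sigma$ and $G_\delta$ of measure zero, supplies a Blaschke product $B_0$ extending analytically to $T\setminus\overline{E}$ with $\lim_{r\to 1}B_0(re^{it})=0$ on $E$; and Fatou's interpolation Theorem 2.4.2, applied to a closed set of measure zero containing $E$, yields an $f_1\neq 1$ in the unit ball of $H^\infty$ with $\lim_{r\to 1}f_1(re^{it})=1$ on $E$. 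Theorem 2.2.2 then produces a Blaschke product $I$ extending analytically on $T\setminus\overline{E}$ with $\lim_{r\to 1}[I(re^{it})-g(re^{it})]=0$ on $E$, and consequently $\lim_{r\to 1}I(re^{it})=\phi(e^{it})$ for every $e^{it}\in E$.

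To construct $g$, observe first that the hypothesis on $\phi$ (preimages of open sets are simultaneously $F_\sigma$ and $G_\delta$) makes $\phi$ a Baire class one function and, crucially, ensures that the class of sets which are both $F_\sigma$ and $G_\delta$ is closed under finite unions, intersections, and complements. For each $n$, partition $\overline{D}$ into finitely many half-open dyadic squares $Q_{n,1},\dots,Q_{n,N_n}$ of diameter less than $1/n$. Each $Q_{n,k}$ is a finite Boolean combination of open sets, so its preimage $E_{n,k}=\phi^{-1}(Q_{n,k})$ is a subset of $E$ that is again of type $F_\sigma$ and $G_\delta$ and of measure zero, and the $E_{n,k}$ partition $E$. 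Choosing representatives $c_{n,k}\in Q_{n,k}\cap\overline{D}$, the simple functions $s_n=\sum_k c_{n,k}\chi_{E_{n,k}}$ satisfy $|s_n-\phi|<1/n$ pointwise on $E$. For each pair $(n,k)$, Berman's Theorem supplies a Blaschke product $B_{n,k}$ whose radial limit vanishes exactly on $E_{n,k}$, and hence $1-B_{n,k}$ has radial limit $1$ there. I would then assemble $g$ as a weighted telescoping series in the building blocks $c_{n,k}(1-B_{n,k})$, with weights chosen both to enforce $\|g\|_\infty\leq 1$ and to guarantee that the successive partial sums recover the prescribed values $\phi(e^{it})$ in the radial limit along $E$.

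The main obstacle lies in this assembly. The Blaschke blocks $B_{n,k}$ are sharp on their own zero sets but only enjoy the trivial bound $|B_{n,k}|\leq 1$ elsewhere, so a naive summation can both exceed the norm bound and spoil the prescribed radial limit on a different $E_{n,k'}$. A natural remedy is a Rudin-Carleson style peaking iteration: at stage $n$ multiply each correction $c_{n,k}(1-B_{n,k})$ by a Fatou peaking function that vanishes rapidly off the closure of $E_{n,k}$ (obtained from a further application of Theorem 2.4.2), so that successive stages do not interfere on $E$ and the series converges uniformly on compacta of $D$. This iterative refinement is in fact the heart of Nicolau's own approach to Theorem 2.2.2, and the present theorem may be regarded as the natural extension in which the target function $g$ is replaced by a prescribed Baire class one profile $\phi$ on $E$.
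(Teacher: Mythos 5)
The paper offers no proof of this statement: it is quoted verbatim as a result of Nicolau from \cite{NI} and used as a black box in the proof of Theorem 2.4.9, so there is no internal argument to compare yours against. Judged on its own terms, your reduction step is sound: \emph{if} you had a function $g$ in the unit ball of $H^\infty$ whose radial limits on $E$ equal $\phi$, then Berman's Theorem 2.2.1 (applicable because $E$ is $F_\sigma$, $G_\delta$ and of measure zero) together with a Fatou interpolation function would verify the hypotheses of Theorem 2.2.2, and the Blaschke product $I$ it produces would do the job. The observation that the ambiguous class ($F_\sigma$ and $G_\delta$ simultaneously) is a Boolean algebra, so that the preimages $E_{n,k}$ of half-open dyadic squares are again ambiguous sets of measure zero, is also correct.

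The genuine gap is the construction of $g$, which is not a technical afterthought but the entire content of the theorem. Your assembly $\sum_k c_{n,k}(1-B_{n,k})$ fails for two concrete reasons. First, at a point $e^{it}\in E_{n,k}$ the cross terms $B_{n,k'}$, $k'\neq k$, are only guaranteed by Berman's theorem to have radial limit $0$ exactly on $E_{n,k'}$ and to extend analytically off $\overline{E_{n,k'}}$; at points of $\overline{E_{n,k'}}\setminus E_{n,k'}$ --- which can contain all of $E_{n,k}$ --- their radial limits need not exist at all, so the partial sums need not have radial limits equal to $s_n$ anywhere on $E$. Second, the proposed remedy of multiplying by Fatou peak functions that ``vanish rapidly off the closure of $E_{n,k}$'' collapses in the generic case: the sets $E_{n,k}$ are disjoint but their closures need not be --- each $E_{n,k}$ can be dense in $E$, so that $\overline{E_{n,k}}=\overline{E}$ for every $k$ --- and a disc-algebra function determined by a closed set cannot separate sets sharing the same closure. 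This is precisely why Nicolau's actual argument is a delicate iterative construction of the Blaschke product itself rather than a reduction to a pre-existing $g$; as written, your proposal defers the whole difficulty to an unproven interpolation claim.
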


\noindent\textbf{Remark 2.4.1.}
We know by the Factoraization theorem that every bounded analytic function $f$ can be decomposed in the product of three specific bounded analytic functions. In particular we know $f=BSF$ where $B$ is a Blaschke product $S$ a singular inner function and $F$ an outer function. The idea is that by studying the Unification Problem under the lens of each of the specific factors in the factoraization theorem one would hope to eventually completely solve the Unification Problem. In this Thesis we will deal only with Blaschke products, and we leave the study of singular inner functions and outer functions for a future research, which may eventually completely solve the Unification Problem.\\

 Now we introduce and solve the following special case of the Unification Problem regarding Blaschke products.
\begin{thm} Let $E_1\subset E_2$ be subsets of the unit circle $T\equiv\lbrace z\mid |z|=1 \rbrace$. Moreover, assume that $E_2$ is of measure zero. Then there exists a Blaschke product $B$ such that the radial limits of $B$ fail to exist precisely at $E_1$ and it has unrestricted limits precisely at $T\setminus E_2$ if and only if $E_2$ is closed and $E_1$ is of type $G_{\delta\sigma}$ and of measure zero.
\end{thm}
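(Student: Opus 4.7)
The necessity direction follows directly from results in this chapter. Assume $B$ is a Blaschke product with the stated properties. That $E_1$ is of type $G_{\delta\sigma}$ is the standard general fact about radial-limit failure sets of $H^\infty$ functions (noted right after Theorem 2.4.1), and $m(E_1)=0$ is Fatou's Theorem. For $E_2$, apply Theorem 2.1.2: the unrestricted limits of the Blaschke product $B$ fail precisely on the closed set $\overline{\{a_n\}}\cap T$, where $\{a_n\}$ are the zeros of $B$. Since $E_2$ is by hypothesis exactly this failure set, $E_2$ is closed.

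For sufficiency, my plan is to combine Kolesnikov's Theorem 2.4.1 with Nicolau's Theorem 2.2.2 via a scaling trick that will ultimately invoke Proposition 2.3.1. First, apply Kolesnikov's theorem to $E_1$ (which is $G_{\delta\sigma}$ of measure zero by hypothesis) to produce $f\in H^\infty$ whose radial limits fail exactly on $E_1$. Rescale $f$ by a positive constant so that $\|f\|_\infty\le 1/2$; this does not change the radial-limit failure set. Since $E_2$ is closed and of measure zero, it is simultaneously of type $F_\sigma$ and $G_\delta$, so by Berman's Theorem 2.2.1 the hypotheses of Nicolau's Theorem 2.2.2 are satisfied with $E=E_2$. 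Applying Nicolau's theorem to the function $f$, which now lies in the unit ball of $H^\infty$, yields a Blaschke product $I$ that extends analytically to $T\setminus E_2$ and satisfies $\lim_{r\rightarrow 1}[I(re^{it})-f(re^{it})]=0$ for every $e^{it}\in E_2$. This $I$ is the candidate.

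To verify the properties of $I$: on $T\setminus E_2$ the analytic extension provides both radial and unrestricted limits; on $E_1$ the radial limits of $f$ fail and $I-f\rightarrow 0$ radially, so the radial (and hence unrestricted) limits of $I$ fail; on $E_2\setminus E_1$ the radial limits of $f$ exist (by Kolesnikov's choice of $f$), so those of $I$ exist and coincide with them. This already shows that the radial limits of $I$ fail precisely on $E_1$, and that the unrestricted-limit failure set of $I$ contains $E_1$ and is contained in $E_2$. The delicate step---and where I expect the main obstacle to lie---is forcing the unrestricted limits of $I$ to also fail at every $\zeta\in E_2\setminus E_1$; a priori the zeros of the Nicolau-produced $I$ might accumulate on a proper subset of $E_2$. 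This is where the scaling $\|f\|_\infty\le 1/2$ pays off: the radial limit of $I$ at such a $\zeta$ equals the radial limit of $f$ at $\zeta$ and therefore has modulus at most $1/2$, whereas by Proposition 2.3.1 any unrestricted limit of a Blaschke product must have modulus one. This contradiction rules out an unrestricted limit at $\zeta$, bypassing the need to directly control the accumulation set of the zeros of $I$ and completing the proof.
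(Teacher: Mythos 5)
Your proof is correct and follows essentially the same route as the paper: Kolesnikov's theorem for $E_1$, Nicolau's Theorem 2.2.2 applied with $E=E_2$ (whose hypotheses hold because a closed set of measure zero is both $F_\sigma$ and $G_\delta$), and the modulus-one property of unrestricted limits of Blaschke products (Proposition 2.3.1) to rule out unrestricted limits on $E_2\setminus E_1$. The only difference is cosmetic: where you rescale the Kolesnikov function so that $\parallel f\parallel_\infty\le 1/2$, the paper instead multiplies it by an auxiliary Blaschke product (obtained from Theorem 2.4.8) whose radial limits equal $\frac{1}{2}$ on $E_2$ --- both devices serve the identical purpose of forcing the radial limits on $E_2\setminus E_1$ to have modulus strictly below one, and your version is, if anything, slightly cleaner.
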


\begin{proof}\textbf{(Necessity.)}
The proof that $E_2$ is closed follows since any Blaschke product has unrestricted limits only on the set complimentary to the accumulation of its zeros. This implies that the set $E_2$ is the closed set in which the zeros of our Blaschke product accumulate. The fact that $E_1$ is $G_{\delta\sigma}$ is well known and as noted earlier its proof may be found in \cite{AL}. \\
\textbf{(Sufficiency.)}
Since $E_1$ is a $G_{\delta\sigma}$ of measure zero we know by {Kolesnikov's Theorem} that there exists a bounded analytic function $f_1$ with no radial limits exactly on $ E_1$. Moreover, since $E_2$ is a closed zero measure set we can apply {Theorem 2.4.8} to find a Blaschke product $B_2$ such that the radial limits of $B_2$ are equal to $\frac{1}{2}$ for all $e^{it}\in E_2$. Moreover, since $E_2$ is closed the conditions of Nicolau's Theorem are satisfied as well. It follows that for every bounded analytic function $g$ there exists a Blaschke product $I$ mimicking its boundary behavior on the set $E_2$. Now the product function $g(z)\equiv f_1(z)\cdot B_2(z)$ has the desired boundary behavior restricted on $E_2$; that is $g$ has no radial limits exactly on $E_1$ and has radial but not unrestricted on $E_2\setminus E_1$ . Thus, by Nicolau's Theorem we can find a Blaschke product $I$ that extends analytically on $T\setminus E_2$ and such that $$\lim_{r\rightarrow 1}[ I(re^{it})-g(re^{it}]=0$$
Then the Blaschke product $I$ has the desired boundary behavior and the proof is complete.
\end{proof}

\noindent\textbf{Remark 2.4.2.}
\noindent Recall that {Kolesnikov's Theorem} states that for any set $E$ of type $G_{\delta\sigma}$ on the unit circle there exists a bounded analytic function which fails to have radial limits precisely on that set. Thus the Characteraization Theorem above can be thought of as an analogue of {Kolesnikov's Theorem} for Blaschke products, but with the extra requirement that $\overline{E}$ is of measure zero. 

\chapter{Frostman's condition and Collwel's Theorems}
\section{Application of Frostman's condition in boundary behavior problems}
 \indent \indent Frostman proved the following well known condition regarding the existence of angular limits of a Blaschke product on the boundary.  
 
\begin{thm}[Frostman's condition] Let $A$ be a sequence of points inside the unit disc $D$ and let $B(z,A)$ denote the Blaschke product whose zero set is $A$. Then a necessary and sufficient condition that $\lim_{r\rightarrow 1}B(re^{i\theta^0})=L$ where $|L|=1$ is that \begin{equation}
\sum_A\frac{1-|a|}{|e^{i\theta}-a|}<\infty.
\end{equation}
\end{thm}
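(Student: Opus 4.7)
The plan is to express $\log|B(re^{i\theta_0})|=\sum_n\log|\phi_{a_n}(re^{i\theta_0})|$ with the standard Blaschke factor $\phi_a(z)=-\tfrac{\bar a}{|a|}\tfrac{z-a}{1-\bar a z}$ and reduce the theorem to an asymptotic analysis of this series as $r\to 1$. First I would reduce the statement to the equivalence ``$|B(re^{i\theta_0})|\to 1$ iff the Frostman sum converges''; given this modulus version, the existence of the full limit and its unimodularity follow from the same tail estimates, which give Cauchy-type control on both $|B|$ and $\arg B$ along the radius once the partial products $B_N(re^{i\theta_0})$ (which extend continuously through $r=1$) are separated from the tail.

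Next I would invoke the standard identity
$$1-|\phi_a(z)|^2 \;=\; \frac{(1-|a|^2)(1-|z|^2)}{|1-\bar a z|^2}$$
(direct computation) together with $-\log(1-t)\asymp t$ on $[0,\tfrac12]$ to further reduce the question to whether
$$S(r) \;=\; \sum_n \frac{(1-|a_n|^2)(1-r^2)}{|1-\bar a_n r e^{i\theta_0}|^2} \;\longrightarrow\; 0 \quad (r\to 1).$$
The key boundary identity $|1-\bar a\, e^{i\theta_0}|=|e^{i\theta_0}-a|$ shows that at $r=1$ the denominator equals $|e^{i\theta_0}-a|^2$, so each summand is naturally tied to the Frostman weight $\tfrac{1-|a|}{|e^{i\theta_0}-a|}$.

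For sufficiency, assuming the Frostman sum is finite, I would use the geometric lower bound $|1-\bar a r e^{i\theta_0}|^2\ge r|e^{i\theta_0}-a|^2$ (valid for $r\in[\tfrac12,1)$ and obtained by direct expansion of $|1-\bar a r e^{i\theta_0}|^2=(1-|a|r)^2+4r|a|\sin^2(\theta/2)$), complemented by the trivial bound $|1-\bar a r e^{i\theta_0}|\ge 1-|a|r\ge 1-r$. Splitting the zeros by whether $|a_n-e^{i\theta_0}|\ge\sqrt{1-r}$ or $<\sqrt{1-r}$, the ``far'' part contributes at most $O(\sqrt{1-r})\cdot\sum_n\tfrac{1-|a_n|}{|e^{i\theta_0}-a_n|}$ via the first bound, while the ``near'' part is handled through the second bound combined with a tail estimate of the Frostman series over the (shrinking as $r\to 1$) collection of indices with $|a_n-e^{i\theta_0}|<\sqrt{1-r}$. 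For necessity, the matching upper bound $|1-\bar a r e^{i\theta_0}|\le|e^{i\theta_0}-a|+(1-r)$ produces a termwise lower bound on $S(r)$ that prevents it from vanishing whenever the Frostman series diverges.

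The main obstacle will be the careful matching of the upper and lower geometric estimates on $|1-\bar a_n r e^{i\theta_0}|$ against the Frostman weight in the regime where zeros cluster at $e^{i\theta_0}$ at a rate comparable to $\sqrt{1-r}$, since there both $(1-r^2)$ in the numerator and the denominator are small simultaneously, and the naive bound $\tfrac{8A_n(1-r)}{L_n^2}$ (with $A_n=1-|a_n|$, $L_n=|e^{i\theta_0}-a_n|$) fails to be summable; one must combine it with the alternative bound $\tfrac{4A_n}{1-r}$ obtained from $|1-\bar a r e^{i\theta_0}|\ge 1-r$. Once these two-sided estimates are in place, the splitting together with dominated convergence and tail estimates closes both directions of the equivalence.
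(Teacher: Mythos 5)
The paper itself offers no proof of this statement (it is quoted as Frostman's classical result), so your attempt can only be judged on its own terms. The sufficiency half of your outline is essentially the standard argument and is workable: from $1-|\phi_a(z)|^2=\frac{(1-|a|^2)(1-|z|^2)}{|1-\bar a z|^2}$, the two bounds $|1-\bar a re^{i\theta_0}|^2\ge r|e^{i\theta_0}-a|^2$ and $|1-\bar a re^{i\theta_0}|\ge 1-r$ combine (for instance via $\min(x,y)\le\sqrt{xy}$) into the $r$-independent dominating series $C\sum_n\frac{1-|a_n|}{|e^{i\theta_0}-a_n|}$, and dominated convergence gives $|B(re^{i\theta_0})|\to 1$; for the existence of the limit itself you still need the companion estimate $|1-\phi_a(re^{i\theta_0})|\le\frac{(1+r)(1-|a|)}{|1-\bar a re^{i\theta_0}|}\le\frac{2(1-|a|)}{\sqrt r\,|e^{i\theta_0}-a|}$, since $1-|\phi_a|^2$ says nothing about $\arg\phi_a$; this should be made explicit, but it fits your framework.

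The necessity half, however, has a genuine gap: the proposed reduction to ``$|B(re^{i\theta_0})|\to 1$ iff the Frostman sum converges'' is false in the ``only if'' direction, so no termwise lower bound on $S(r)$ can close the argument. Take $\theta_0=0$ and zeros $a_k^{\pm}=(1-\epsilon_k)e^{\pm i\delta_k}$ with $\delta_k=2^{-k}$, $\epsilon_k=2^{-k}/k$. Then $|1-a_k^{\pm}|\asymp\delta_k$, so $\sum_k\frac{1-|a_k^{\pm}|}{|1-a_k^{\pm}|}\asymp\sum_k\frac1k=\infty$; yet each term of $S$ is $\asymp\frac{\epsilon_k(1-r)}{((1-r)+\delta_k)^2}=\frac1k\cdot\frac{(1-r)\delta_k}{((1-r)+\delta_k)^2}$, and for $1-r=2^{-m}$ this gives $S(r)\asymp\sum_k\frac{2^{-|m-k|}}{k}\asymp\frac1m\to 0$, so $|B(r)|\to 1$. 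Since the zero set is conjugation-symmetric, $B(r)$ is real and nonvanishing on $[0,1)$, hence $\lim_{r\to1}B(r)$ exists and equals $\pm1$ while the Frostman series diverges. The failure of your termwise bound is visible in the factor $\frac{(1-r)\,|e^{i\theta_0}-a_n|}{\left(|e^{i\theta_0}-a_n|+(1-r)\right)^2}\le\frac14$: at a fixed $r$ it is of order one only for the ``resonant'' zeros with $|e^{i\theta_0}-a_n|\asymp 1-r$, so each $S(r)$ samples only one block of the Frostman series, and a divergent series can have all its blocks tending to zero. This is precisely why the classical necessity statement (Frostman; see also Cargo and Collingwood--Lohwater) is formulated for $B$ \emph{together with all its subproducts}: any correct proof of necessity must use more than the behavior of $|B|$ along the radius, and as stated here the claim needs either that stronger hypothesis or a reformulation.
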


\noindent\textbf{Definition 3.1.1.} We define $B_L$ as the subclass of $H^\infty$ consisting of Blaschke products $B(re^{i\theta})$ such that for all $\theta \in [0,2\pi]$ $\lim_{r\rightarrow 1}B(re^{i\theta})=L(\theta)$ exists.\vspace{0.12in}

 Now we will utilize {Frostman's condition} $(3.1)$ and prove the following:

\begin{prp} Let $B(z,A)\in B_L$ be a Blaschke product with zero set $A=\lbrace a_n \rbrace$, then the subset of $A'$ in the unit circle in which $B(z,A)$ has radial limits of modulus $1$ is a set that is both $G_{\delta}$ and $F_\sigma$. \end{prp}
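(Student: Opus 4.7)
The plan is to reduce the claim to showing that the set $E := \{e^{i\theta} \in T : |L(\theta)| = 1\}$ is simultaneously $F_\sigma$ and $G_\delta$ in $T$, where $L(\theta) := \lim_{r \to 1^-} B(re^{i\theta})$ is well-defined on all of $T$ because $B \in B_L$. The Blaschke condition $\sum_n (1 - |a_n|) < \infty$ forces $|a_n| \to 1$, so $A'$ is a closed subset of $T$, and the subset described in the statement is $S = A' \cap E$. Since closed sets in a metric space are both $F_\sigma$ and $G_\delta$, and intersecting with a closed set preserves each of these classes, it will suffice to show that $E$ itself belongs to $F_\sigma \cap G_\delta$.

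For the $F_\sigma$ direction I would apply Frostman's condition (Theorem 3.1.1) to rewrite $E$ as the set where a lower semicontinuous function is finite. Put
\[
\varphi(\theta) := \sum_{n=1}^{\infty} \frac{1 - |a_n|}{|e^{i\theta} - a_n|},
\]
so that, by Frostman, $|L(\theta)| = 1$ precisely when $\varphi(\theta) < \infty$. The partial sums $\varphi_N$ are finite sums of continuous non-negative functions, hence continuous, and $\varphi = \sup_N \varphi_N$; thus $\varphi$ is lower semicontinuous on $T$, and every sublevel set $\{\varphi \leq N\}$ is closed. Consequently $E = \{\varphi < \infty\} = \bigcup_{N=1}^{\infty} \{\varphi \leq N\}$ is $F_\sigma$.

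For the $G_\delta$ direction I would use that $L$ is of Baire class $1$: the maps $\theta \mapsto B((1 - 1/k)e^{i\theta})$ are continuous on $T$ and converge pointwise to $L$ as $k \to \infty$, so $|L|$ is Baire class $1$ on $T$ as well. By the classical Lebesgue characterization, the preimage of every open subset of $\mathbb{R}$ under a Baire class $1$ function is $F_\sigma$; applying this to $(-\infty, 1)$ and recalling $|L| \leq 1$ gives that $\{|L| < 1\}$ is $F_\sigma$, whence its complement $E$ is $G_\delta$. Combining the two halves, $S = A' \cap E$ lies in $F_\sigma \cap G_\delta$, as required.

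The main (and really only) subtlety is that neither tool by itself yields both descriptive classes for $E$: Frostman's sum produces the $F_\sigma$ half via lower semicontinuity, and the Baire class $1$ structure of radial limits produces the $G_\delta$ half via the open-preimage characterization; both ingredients are genuinely needed.
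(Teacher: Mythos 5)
Your proof is correct and follows essentially the same route as the paper: the $F_\sigma$ half comes from Frostman's condition together with the continuity of the partial sums of $\sum_n(1-|a_n|)/|e^{i\theta}-a_n|$, and the $G_\delta$ half comes from the continuity of the dilated functions $\theta\mapsto B(r_ne^{i\theta})$ and their pointwise convergence to $L$. The only (inessential) difference is in how the second half is finished --- you invoke the Lebesgue characterization of Baire class $1$ preimages of open sets, whereas the paper applies the fact that the unboundedness set of a sequence of continuous functions is $G_\delta$ to the auxiliary functions $1/(1-|B(r_ne^{i\theta})|)$; these are interchangeable here, and your explicit reduction of the claim to $A'\cap E$ with $A'$ closed is a small point the paper leaves implicit.
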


\begin{proof} Indeed if we define the following sequence of functions $\lbrace f_n \rbrace$

\begin{align*}
f_n:[0,2&\pi]\rightarrow\mathbb{R}_{>0}\\
&\theta\longmapsto\sum_{k=1}^{n}\frac{1-|a_k|}{|e^{i\theta}-a_k|}.
\end{align*}

 We note that the sequence $\lbrace f_n\rbrace$ is a sequence of continuous functions on $[0,2\pi]$ and the sequence is unbounded exactly at the values of $\theta$ where {Frostman's condition} $(3.1)$ from above is not satisfied. That is exactly at the set of points where the Blaschke product $B(z,A)$ fails to have radial limits of modulus $1$. Now, by a well known theorem the set of points where the sequence $\lbrace f_n\rbrace$ is unbounded is of type $G_\delta$. It follows that the set of points where the radial limit exists and is of modulus $1$ is of type $F_\sigma$. Now let $r_n=1-\frac{1}{n}$ and consider the following set of continuous functions $$B^*_n(r_ne^{i\theta})=\frac{1}{1-|B_{{r_n}}(e^{i\theta})|}.$$

 Then, clearly the set of points $\lbrace e^{i\theta}\rbrace$ for $\theta \in [0,2\pi]$ where the sequence of functions $B^*_n$ is unbounded is exactly the set of point where $\limsup_{r\rightarrow 1}B(re^{i\theta})=1$. But since out Blaschke product $B(z)$ is in the class $B_L$, this is exactly the set of points where $\lim_{r\rightarrow 1}B(re^{i\theta})=1$. Therefore by the well known theorem this set is of type $G_\delta$ and the proof is complete.
\end{proof}

\section{Collwel's Theorems}
\indent \indent The following theorems are from Colwell's paper \cite{PE}; On the boundary behavior of radial limits of Blaschke products. The original proof of the necessity is long and complex. In this section, we will present a straightforward method of proving the result based on Baire's theorem, or even an elementary theorem. \\
\indent Let $A'$ be the set of limit points of $A$ on $T$. The following theorems are the main results of P. Colwell's paper \cite{PE} (see Theorem 2 and Theorem 3 in \cite{PE}).
\begin{thm} Let $E\subset T$. There exists a Blaschke product $B(z;A)$ for which $B(e^{i\theta})$ is defined and of modulus one at every point of $T$ and $A'=E$ where $A$ is the set of zeros of our Blaschke product $B(z;A)$ if and only if $E$ is closed and nowhere dense.
\end{thm}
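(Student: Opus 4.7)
The plan is to handle necessity and sufficiency separately; the necessity is the novel part of the argument and the place where Baire's theorem enters.

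For necessity, suppose $B(z;A)$ is a Blaschke product whose radial boundary values are defined and of modulus one everywhere on $T$, with $A'=E$. That $E$ is closed is immediate since the derived set of a subset of $T$ is always closed. For nowhere density, I will exploit that the boundary function $\theta\mapsto B(e^{i\theta})$ is of Baire class $1$, being the pointwise limit of the continuous functions $\theta\mapsto B(re^{i\theta})$ as $r\to 1^-$, and so by Baire's theorem its discontinuity set is of first category in the complete metric space $T$. If I can show $E$ is contained in this discontinuity set, then the closed set $E$ sits inside a meager set and therefore, by the Baire category theorem, has empty interior; being closed, it is nowhere dense.

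The key point is the inclusion $E\subseteq\{\theta:B(e^{i\cdot})\text{ discontinuous at }\theta\}$. I would argue by contradiction: suppose $\theta_0\in E$ but $B(e^{i\cdot})$ is continuous at $\theta_0$. Since $B\in H^\infty$ is in particular bounded harmonic, Corollary 1.2.2 identifies $B(z)$ with the Poisson integral of its bounded boundary values. A standard property of the Poisson integral states that if the bounded boundary data is defined and continuous at $e^{i\theta_0}$, then the integral extends continuously to $e^{i\theta_0}$ from $\overline D$; in other words, $B$ has an unrestricted limit at $e^{i\theta_0}$. But $\theta_0\in E=A'$, and by Theorem 2.1.2 the unrestricted limit of $B$ fails at every point of $A'$, a contradiction. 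This completes the necessity.

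For sufficiency, given $E$ closed and nowhere dense in $T$, the complement $T\setminus E$ is open and dense, so it is a countable disjoint union of open arcs $\{I_j\}$. My plan is the direct construction of placing a single zero $a_j$ per arc, with angular projection at the midpoint of $I_j$ and radial distance comparable to $|I_j|^2$ (with a harmless modification if $|I_j|$ happens to be too large). The Blaschke condition $\sum_j(1-|a_j|)\lesssim\sum_j|I_j|^2<\infty$ is immediate because $\sum_j|I_j|\le 2\pi$. The accumulation set $A'$ equals $E$ because midpoints of shrinking arcs cluster precisely at points of $E$, while points of $T\setminus E$ lie in a single gap and cannot be limit points of $\{a_j\}$. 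Finally, Frostman's condition (Theorem 3.1.1) at every $\theta\in T$ reduces to an estimate of the form $\sum_j|I_j|^2/|e^{i\theta}-a_j|\lesssim\sum_j|I_j|<\infty$, using that $\theta\in E$ (or $\theta\in I_{j_0}$ for the single special index) forces the denominator to be at least of order $|I_j|$ uniformly in $j$. Hence the resulting $B$ has radial limits of modulus one at every $\theta\in T$.

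The main obstacle is the inclusion step in the necessity: translating continuity of the boundary function at $\theta_0$ into the existence of an unrestricted limit at $e^{i\theta_0}$. This is precisely where the Poisson-integral representation of bounded harmonic functions is essential; without it one could only conclude that $E$ is contained in the discontinuity set of a bare Baire-$1$ function, which by itself does not force an unrestricted limit to exist. The sufficiency is an elementary construction, but requires care to meet two simultaneous constraints (Blaschke summability and the Frostman sum at every $\theta$), which the gap-midpoint prescription manages cleanly.
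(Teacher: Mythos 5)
Your necessity argument is correct and is essentially the paper's own proof in a slightly different packaging: the paper also combines the Baire class~$1$ property of the radial limit function (Theorem 3.3.2) with the Fatou continuity theorem for the Poisson integral (Theorem 3.3.1), concluding that the dense set of continuity points is disjoint from $A'$; you phrase this as ``$E$ sits inside the meager discontinuity set,'' and you invoke Theorem 2.1.2 at the contradiction step where the paper invokes Riesz's theorem, but these are cosmetic differences. (The paper, incidentally, only supplies the necessity; the sufficiency is quoted from Colwell.)

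Your sufficiency construction, however, has a genuine gap: placing a single zero over the midpoint of each complementary arc does \emph{not} give $A'=E$ in general. A point $\theta_0\in E$ lies in $A'$ under your prescription only if every neighborhood of $\theta_0$ contains entire complementary arcs $I_j$ with $|I_j|\to 0$, i.e.\ only if $\theta_0$ is a limit of points of $E$. If $\theta_0$ is an isolated point of $E$ it is the common endpoint of two arcs of fixed positive length, no midpoints accumulate there, and $\theta_0\notin A'$; the extreme case $E=\{1,-1\}$ produces exactly two zeros and $A'=\emptyset$. So your construction proves the theorem only for $E$ without isolated points. The fix is the standard one (and the delicate point of Colwell's proof): in each complementary arc one must place an infinite sequence of zeros accumulating \emph{tangentially} at both endpoints of the arc (the endpoints of the complementary arcs are dense in $E$, so this forces $A'=E$), while choosing $1-|a|$ small compared to the angular distance from $a$ to $E$ so that the Frostman sum $\sum (1-|a|)/|e^{i\theta}-a|$ still converges at every $\theta\in E$. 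Note that a purely radial approach to the endpoints would destroy Frostman's condition at those points, which is why the tangential placement is essential; your Frostman estimate itself is fine, it is only the identification of the accumulation set that fails.
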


\begin{thm}
Let $B(z)$ be a Blaschke product with $B(e^{i\theta})$ defined and of modulus 1 at every point of $T$. Then, as a function $B(e^{i\theta})$ is discontinuous at $\theta=\theta_0$, if and only if $e^{i\theta_0}$ is in $A'$.
\end{thm}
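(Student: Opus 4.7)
The plan is to handle each direction of the biconditional using Theorem 2.1.2 together with the Poisson integral representation of bounded analytic functions.

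For the easy direction, assume $e^{i\theta_0}\notin A'$. Then by Theorem 2.1.2 the Blaschke product $B(z)$ has an unrestricted limit at $e^{i\theta_0}$, of modulus $1$. In particular the function $B$ extended to $\overline{D}$ by its radial limits is continuous at $e^{i\theta_0}$, and therefore its restriction $B(e^{i\theta})$ to $T$ is continuous at $\theta_0$ as a function of the angle.

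For the converse I would argue by contradiction. Suppose $B(e^{i\theta})$ is continuous at $\theta_0$ but $e^{i\theta_0}\in A'$. Set $c=B(e^{i\theta_0})$, so $|c|=1$. Continuity gives, for every $\varepsilon>0$, some $\delta>0$ with $|B(e^{it})-c|<\varepsilon$ whenever $|t-\theta_0|<\delta$. Since $B\in H^\infty$, by Corollary 1.2.2 it coincides with the Poisson integral of its boundary values, so
\[
B(re^{i\theta})-c=\frac{1}{2\pi}\int_{-\pi}^{\pi}p_r(\theta-t)(B(e^{it})-c)\,dt.
\]
Splitting the integral into the arc $\{|t-\theta_0|<\delta\}$, where the integrand is bounded by $\varepsilon$, and its complement, where $|B(e^{it})-c|\leq 2$ but the Poisson kernel is uniformly small once $|\theta-\theta_0|<\delta/2$ and $r$ is close enough to $1$, one obtains $|B(z)-c|<2\varepsilon$ for $z=re^{i\theta}$ in a sufficiently small one-sided neighborhood of $e^{i\theta_0}$ inside $D$. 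Taking $\varepsilon=1/4$ forces $|B(z)|\geq 1/2$ throughout such a neighborhood. However, $e^{i\theta_0}\in A'$ supplies a sequence of zeros $a_n\to e^{i\theta_0}$ with $B(a_n)=0$, which contradicts that lower bound.

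The main obstacle is the uniform bound on the Poisson kernel on the far piece: for $|\theta-\theta_0|<\delta/2$ and $|t-\theta_0|\geq\delta$ one has $|\theta-t|\geq\delta/2$, so
\[
p_r(\theta-t)\leq\frac{1-r^2}{1-2r\cos(\delta/2)+r^2}\longrightarrow 0\quad(r\to 1^-),
\]
which is precisely the estimate used to verify that the Poisson kernel is an approximate identity in Theorem 1.2.5. Once this is in hand, the whole argument is elementary and uses only results already established, namely Theorem 2.1.2 and the Poisson representation of $H^\infty$ functions. (One could alternatively phrase the converse as showing that continuity of $B|_T$ at $\theta_0$ forces the existence of an unrestricted limit at $e^{i\theta_0}$, which directly contradicts Theorem 2.1.2 when $e^{i\theta_0}\in A'$; both formulations rest on the same Poisson estimate.)
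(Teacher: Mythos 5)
Your proposal is correct and follows essentially the same route as the paper: the easy direction via Theorem 2.1.2, and the converse by showing that continuity of the boundary function at $\theta_0$ forces an unrestricted limit of modulus $1$ at $e^{i\theta_0}$, contradicting the zeros accumulating there. The only difference is that you re-derive the key step (boundary continuity of the Poisson integral at a point where the boundary function is continuous) directly from the approximate-identity estimate, whereas the paper simply cites its Theorem 3.3.1, which is proved by exactly that estimate.
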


 Our purpose now is to simplify the proofs of {Theorem 3.2.1} and {Theorem 3.2.2} of \cite{PE}. In fact we will show that the necessity parts of both theorems follow directly by two classical theorems: the elementary theorem on the boundary continuity of the Poisson integral and {Baire's Theorem} on the first class functions. In fact, we only need a particular case of the necessity part of {Baire's Theorem} or just a well known elementary argument.\\
 \indent Such proofs have remained unnoticed for a long time even though \cite{PE} has been discussed by known authors (including Colwell, Noshiro and Cargo) and also \cite{PE} has been cited in many further works in Blaschke products. However, it should be noted that except Baire's and Fatou's theorems, our simple approach uses {Theorem 2.1.3} (Riesz Theorem).\\
As Cargo pointed \cite{CA}, both {Theorem 3.2.1} and {Theorem 3.2.2} remain valid if the phrase "and of modulus one" is omitted. The appropriate statements are:

\begin{thm} Let $E\subset T$. There exists a Blaschke product $B(z;A)$ for which $B(e^{i\theta})$ is defined at every point of $T$ and $A'=E$ where $A$ is the set of zeros of our Blaschke product $B(z;A)$ if and only if $E$ is closed and nowhere dense.
\end{thm}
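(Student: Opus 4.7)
The plan is to prove necessity using Baire's theorem on first class functions combined with the elementary boundary-continuity property of the Poisson integral, and to obtain sufficiency directly from Colwell's original Theorem~3.2.1. Throughout, let $\tilde{B}(e^{i\theta}) := \lim_{r \to 1^-} B(re^{i\theta})$ denote the radial-limit function, defined at every point of $T$ by hypothesis.

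For necessity, $A'$ is automatically closed as the derived set of the zero sequence $A$; the substantive claim is that $A'$ is nowhere dense. The key intermediate assertion is
\begin{equation*}
A' \;\subseteq\; \{\zeta \in T : \tilde{B} \text{ is discontinuous at } \zeta\}.
\end{equation*}
To establish it, fix $\zeta \in A'$ and suppose, toward a contradiction, that $\tilde{B}$ is continuous at $\zeta$. Since $B \in H^\infty$, the Poisson integral representation developed in Section~1.2 (a consequence of Fatou's theorem) identifies $B$ with the Poisson integral of its boundary function $\tilde{B}$. The classical elementary fact on the boundary behavior of the Poisson integral then asserts that if the boundary data is continuous at $\zeta$, the Poisson integral extends continuously in the unrestricted sense to $\zeta$. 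Consequently $B$ would possess an unrestricted limit at $\zeta$, equal to $\tilde{B}(\zeta)$. But Theorem~2.1.2 states that the unrestricted limits of a Blaschke product fail to exist at every point of $A'$, yielding the required contradiction.

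Since $\tilde{B}$ is the pointwise limit of the continuous functions $\theta \mapsto B(re^{i\theta})$ as $r \to 1^-$, it belongs to the Baire first class on $T$. By Baire's theorem (as recalled in Section~2.4), the set of continuity points of $\tilde{B}$ is a dense $G_\delta$, so its discontinuity set is meager in $T$. Hence $A'$ is a closed subset of a meager set; since no nonempty open arc of $T$ can be meager (Baire category theorem applied to the complete metric space $T$), $A'$ has empty interior and is therefore nowhere dense.

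For sufficiency, given any closed nowhere dense $E \subset T$, the sufficiency portion of Colwell's Theorem~3.2.1 furnishes a Blaschke product $B(z;A)$ with $A' = E$ whose radial-limit function is defined and of modulus $1$ at every point of $T$; this is strictly stronger than what Theorem~3.2.4 requires, so the same construction works. The main obstacle is the necessity half, and its difficulty dissolves as soon as continuity of $\tilde{B}$ at $\zeta$ is converted into existence of an unrestricted limit of $B$ at $\zeta$ via the Poisson integral---precisely the shortcut that bypasses the long original argument in \cite{PE}.
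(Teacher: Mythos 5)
Your proof is correct and follows essentially the same route as the paper: Baire's theorem on first-class functions gives a dense set of continuity points of the radial-limit function, the elementary continuity theorem for the Poisson integral (Theorem 3.3.1) upgrades continuity of the boundary data at $\zeta$ to an unrestricted limit of $B$ at $\zeta$, and this is incompatible with $\zeta\in A'$, so the closed set $A'$ misses a dense set and is nowhere dense. The only cosmetic difference is that you obtain the contradiction at a point of $A'$ by citing Theorem 2.1.2 (unrestricted limits fail on $A'$), whereas the paper invokes Riesz's theorem to force the limit to have modulus one and then contradicts the accumulation of zeros; your sufficiency argument (via the sufficiency half of Theorem 3.2.1) is also exactly the paper's.
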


\begin{thm}
Let $B(z)$ be a Blaschke product with $B(e^{i\theta})$ defined at every point of $T$. Then, as a function $B(e^{i\theta})$ is discontinuous at $\theta=\theta_0$, if and only if $e^{i\theta_0}$ is in $A'$.
\end{thm}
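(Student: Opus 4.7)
The plan is to prove the two implications separately, with the bulk of the work in showing that $e^{i\theta_0}\in A'$ forces discontinuity at $\theta_0$. The tools are Theorem 2.1.2 (unrestricted limits of $B$ exist precisely off $A'$), Corollary 1.2.1 (every $B\in H^\infty$ is the Poisson integral of its boundary function), the elementary fact that the Poisson integral of a bounded measurable function is continuous at each continuity point of its boundary data, and Proposition 2.3.1 (itself a quick consequence of Riesz's Theorem 2.1.3).

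For the direction \emph{$B(e^{i\theta})$ discontinuous at $\theta_0$ implies $e^{i\theta_0}\in A'$}, I argue the contrapositive. If $e^{i\theta_0}\notin A'$, then $e^{i\theta_0}$ is not a limit point of the zeros $\{a_n\}$, so by Theorem 2.1.2 the Blaschke product $B$ has an unrestricted limit at $e^{i\theta_0}$. Hence $B$ extends continuously to this boundary point from within $\overline{D}$, and in particular its boundary function $B(e^{i\theta})$ is continuous at $\theta_0$.

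For the direction \emph{$e^{i\theta_0}\in A'$ implies $B(e^{i\theta})$ discontinuous at $\theta_0$}, I argue by contradiction. Choose zeros $a_n\to e^{i\theta_0}$, so $B(a_n)=0$ for each $n$, and suppose $B(e^{i\theta})$ is continuous at $\theta_0$ with value $c=B(e^{i\theta_0})$. By Corollary 1.2.1 the bounded analytic function $B(z)$ is the Poisson integral of $B(e^{i\theta})$, and the elementary continuity of the Poisson integral at continuity points of its boundary data then yields the unrestricted limit $\lim_{z\to e^{i\theta_0}}B(z)=c$ in $\overline{D}$. Proposition 2.3.1 forces $|c|=1$. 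However, specializing the unrestricted convergence to the sequence $a_n$ of zeros of $B$ gives $c=\lim_n B(a_n)=0$, contradicting $|c|=1$.

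The central obstacle is the single step in which boundary continuity at one point $\theta_0$ is upgraded to full unrestricted continuity at $e^{i\theta_0}$ in $\overline{D}$; this is precisely what lets the interior zeros $a_n$ of $B$ be confronted with the assumed boundary value $c$. I plan to invoke it via the Poisson representation of $H^\infty$ functions together with the standard concentration argument for the Poisson kernel. Once this localized fact is granted, both halves of the theorem follow in a few lines.
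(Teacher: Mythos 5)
Your proposal is correct and follows essentially the same route as the paper: off $A'$ the unrestricted limit (indeed analytic continuation) gives boundary continuity, while at a point of $A'$ assumed boundary continuity is upgraded via the Poisson-integral continuity theorem (Theorem 3.3.1) to an unrestricted limit, which must have modulus one by Riesz's theorem (Proposition 2.3.1) yet must vanish along the accumulating zeros. The only slip is a citation: the Poisson representation of a bounded harmonic function by its boundary values is the corollary following Fatou's Theorem in Section 1.2, not Corollary 1.2.1 (which concerns a.e.\ convergent subsequences).
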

 Our new method for the proofs is equally well applicable for the necessity parts of these results.
Note that one of the proofs of the necessity part of {Theorem 3.2.1} given in \cite{PE} is not valid in case of {Theorem 3.2.3}. We will discuss this below in details. We also offer some clarifications on the proof of the necessity part of {Theorem 3.2.1} in \cite{PE}.

\section{Auxilliary results}

\indent \indent The following simple theorem of continuity of the Poisson integral at a boundary point is due to Fatou (see \cite{GO} p. 381, Theorem 1 or \cite{AZ}, p. 98, a special case of Theorem 6.13).

\begin{thm} Let $f$ be a periodic and integrable function on $[-\pi, \pi]$, and let $U(z) (z\in D)$ be the Poisson integral of $f$. If $f$ is continuous at a point $\theta=\theta_0$, then $U(z)$ tends to $f(\theta_0)$ as $z=re^{i\theta}$ approaches the point $e^{i\theta_0}$ arbitrarily in the open unit disc $D$.
\end{thm}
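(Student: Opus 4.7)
The plan is to exploit the three defining properties of the Poisson kernel (positivity, total mass one, and concentration at the origin) in essentially the same way one proves that an approximate identity recovers the value of a continuous function, but with the extra care needed because $z = re^{i\theta}$ is allowed to approach $e^{i\theta_0}$ in an arbitrary (non-radial) manner. Using the normalization $\frac{1}{2\pi}\int_{-\pi}^{\pi} p_r(\theta-t)\,dt = 1$, I would start from
\begin{equation*}
U(re^{i\theta}) - f(\theta_0) = \frac{1}{2\pi}\int_{-\pi}^{\pi}\bigl[f(t)-f(\theta_0)\bigr]\,p_r(\theta-t)\,dt,
\end{equation*}
fix an arbitrary $\epsilon > 0$, and use the continuity of $f$ at $\theta_0$ to choose $\delta > 0$ such that $|f(t)-f(\theta_0)| < \epsilon$ whenever $|t-\theta_0| < 2\delta$.

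The next step is the standard splitting of the integral over the set $I_\delta = \{t:|t-\theta_0|<2\delta\}$ and its complement. On $I_\delta$, since $p_r \geq 0$ and its total mass equals $2\pi$, the contribution is bounded above by $\epsilon$ independently of $z$. On the complement, I would use the explicit expression
\begin{equation*}
p_r(\theta-t) = \frac{1-r^2}{|e^{it}-re^{i\theta}|^2}
\end{equation*}
together with the observation that if $z = re^{i\theta}$ is sufficiently close to $e^{i\theta_0}$ (say $|z - e^{i\theta_0}| < \delta/2$), then for every $t$ with $|t-\theta_0| \geq 2\delta$ one has $|e^{it}-z|$ bounded below by a positive constant $c(\delta)$ depending only on $\delta$. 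This is the key geometric point that makes the argument work for arbitrary (not merely radial) approach: the denominator stays uniformly away from zero while the numerator $1-r^2 = (1-r)(1+r)$ tends to $0$ as $z \to e^{i\theta_0}$.

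Combining, the far-away contribution is dominated by $\frac{1-r^2}{c(\delta)^2}\cdot\|f\|_{L^1}$ up to constants, which tends to $0$ as $z \to e^{i\theta_0}$. Hence for $z$ sufficiently close to $e^{i\theta_0}$, $|U(z) - f(\theta_0)| < 2\epsilon$, which is exactly what is needed. The main obstacle (and really the only nontrivial point) is verifying the uniform lower bound $|e^{it}-z| \geq c(\delta) > 0$ on the far region as $z$ ranges over a neighborhood of $e^{i\theta_0}$ in $D$; this is a short compactness/triangle-inequality argument but is the place where the unrestricted approach distinguishes the statement from the purely radial one already handled inside the proof of Fatou's theorem earlier in the text.
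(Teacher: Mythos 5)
Your proposal is correct and follows essentially the same route as the paper's proof: split the integral into the part near $\theta_0$ (controlled by continuity of $f$) and the far part (controlled by the decay of the Poisson kernel away from its singularity). If anything, your explicit lower bound $|e^{it}-z|\geq c(\delta)$ for $z$ near $e^{i\theta_0}$ and $|t-\theta_0|\geq 2\delta$ makes the handling of the unrestricted (non-radial) approach more transparent than the paper's appeal to the approximate-identity property, which on its face only addresses the radial parameter $r$.
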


\begin{proof}
Let $A=f(e^{i\theta_0})$, then we need to show that for any $\epsilon>0$ there exists $\delta_\epsilon$ and $r_\epsilon$ such that$\text{ for all } r>r_\epsilon$ $$|re^{i\theta}-re^{i\theta_0}|<\delta_\epsilon\Rightarrow |U(re^{i\theta})-A|<\epsilon.$$
To this end we fix $\epsilon>0$, then for any $\delta>0$ \begin{align*}
|U(z)-A|&=\frac{1}{2\pi}\vert\int_{-\pi}^{\pi}(f(e^{i\theta})-A)p(r,\theta-t)dt\mid\\
&=\frac{1}{2\pi}\mid\int_{-\delta}^{\delta}(f(e^{i\theta})-A)p(r,\theta-t)dt+\int_{|t|>\delta}(f(e^{i\theta})-A)p(r,\theta-t)dt\mid\\
&\leq\frac{1}{2\pi}\int_{-\delta}^{\delta}\mid(f(e^{i\theta})-A)p(r,\theta-t)\mid dt+\int_{|t|>\delta}\mid(f(e^{i\theta})-A)p(r,\theta-t)\mid dt.
\end{align*}
 Now, since $f$ is continuous at $e^{i\theta_0}$, there exists $\delta_\epsilon$, such that $$|e^{i\theta}-e^{i\theta_0}|<\delta_\epsilon\Rightarrow|f(e^{i\theta})-A|<\frac{\epsilon}{2}$$ Moreover, since the Poisson kernel is an approximate identity, there exists $0<r_\epsilon<1$ such that $$r>r_\epsilon\Rightarrow\int_{|t|>\delta_\epsilon}\mid(f(e^{i\theta})-A)p(r,\theta-t)\mid dt<\frac{\epsilon}{2}$$
 Therefore, for $\delta<\delta_\epsilon$ and $r>r_\epsilon$ $$|U(z)-A|\leq\frac{1}{2\pi}\int_{-\delta_\epsilon}^{\delta\epsilon}\mid(f(e^{i\theta})-A)p(r,\theta-t)\mid dt+\int_{|t|>\delta_\epsilon}\mid(f(e^{i\theta})-A)p(r,\theta-t)\mid dt<\epsilon.$$
\noindent The proof is complete.
\end{proof}

 In {Theorem 3.3.1} the function $f$ is either a real valued or complex valued function. Obviously, the complex valued case follows from the real valued case by applying the theorem to the real and imaginary parts of $f$. The short and elementary proof of {Theorem 3.3.1} merely uses the basic properties of the Poisson kernel discussed in Chapter $1$.\vspace{0.12in}

 The next theorem is a corollary of the necessity part of R. Baire's topological theorem on the first class functions (see \cite{BA}, p. 462, {Baire's Theorem}, or \cite{HA}, p. 287, Theorem V), which we formulate for functions defined on $T$.

\begin{thm}
Let $f$, $f_n$ be defined on $T$ and let $f_n$ be continuous on $T$. If $\lbrace f_n\rbrace$ converges to $f$ at each point of $T$, then $f$ is continuous on a dense subset of $T$.
\end{thm}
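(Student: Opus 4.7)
The plan is to prove the stronger statement that the discontinuity set of $f$ is of first category in $T$, whence its complement is dense by the Baire category theorem applied to the complete metric space $T$. I would encode discontinuity via the oscillation
$$\omega_f(x) = \inf_{\delta > 0}\sup\{|f(y) - f(z)| : y, z \in T,\ |y - x| < \delta,\ |z - x| < \delta\}.$$
Then $f$ is continuous at $x$ if and only if $\omega_f(x) = 0$, and for each positive integer $n$ the set $D_n := \{x \in T : \omega_f(x) \geq 1/n\}$ is closed in $T$. Since the discontinuity set is the $F_\sigma$ set $\bigcup_n D_n$, it suffices to show every $D_n$ is nowhere dense in $T$.

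Fix $n$ and set $\epsilon = 1/(5n)$. For each positive integer $m$ define
$$E_m = \{x \in T : |f_j(x) - f_k(x)| \leq \epsilon\ \text{for all}\ j, k \geq m\},$$
which is closed because every $f_j$ is continuous. Pointwise convergence of $\{f_k\}$ makes the sequence Cauchy at every $x$, so $T = \bigcup_m E_m$. Let $V \subseteq T$ be an arbitrary nonempty open set; its closure $\overline{V}$ is a complete metric subspace of $T$ covered by the closed sets $E_m \cap \overline{V}$. Applying the Baire category theorem inside $\overline{V}$, some $E_{m_0} \cap \overline{V}$ has nonempty interior in $\overline{V}$, hence contains a nonempty relatively open set of the form $W = \overline{V} \cap G$ for some open $G \subseteq T$.

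Because $V$ is dense in $\overline{V}$, the set $V' := V \cap W = V \cap G$ is nonempty, and because $V$ and $G$ are both open in $T$, so is $V'$. For $x \in V'$, letting $j \to \infty$ in $|f_j(x) - f_{m_0}(x)| \leq \epsilon$ yields $|f(x) - f_{m_0}(x)| \leq \epsilon$. Given $x_0 \in V'$, continuity of $f_{m_0}$ produces a neighborhood $U \subseteq V'$ of $x_0$ on which $|f_{m_0}(y) - f_{m_0}(x_0)| < \epsilon$; then for $y, z \in U$ the triangle inequality gives
$$|f(y) - f(z)| \leq |f(y) - f_{m_0}(y)| + |f_{m_0}(y) - f_{m_0}(z)| + |f_{m_0}(z) - f(z)| \leq 4\epsilon < 1/n,$$
so $\omega_f(x_0) < 1/n$ and $x_0 \notin D_n$. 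Thus $V$ contains a nonempty open subset disjoint from $D_n$, proving that $D_n$ is nowhere dense.

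The main obstacle I anticipate is the technical upgrade from a set produced by Baire that is open only in $\overline{V}$ to one that is open in $T$; the resolution is to exploit the density of $V$ inside $\overline{V}$, which forces $V$ to meet every nonempty relatively open subset of $\overline{V}$, after which intersecting with $V$ yields a bona fide open subset of $T$ on which the sequence $\{f_j\}$ is uniformly Cauchy up to $\epsilon$. Once this is in hand, the oscillation bound reduces to three applications of the triangle inequality with $\epsilon$ chosen comfortably less than $1/(4n)$.
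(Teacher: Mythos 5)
Your proof is correct and rests on the same engine as the paper's: the closed sets $E_m$ on which the tail of the sequence is uniformly Cauchy up to $\epsilon$, a Baire category argument to locate one with nonempty interior inside an arbitrary arc, and the triangle inequality through the continuous $f_{m_0}$ to bound the oscillation of $f$ there. The only difference is the endgame: you prove the stronger statement that each oscillation set $D_n$ is nowhere dense (so the discontinuity set is of first category) and then apply Baire once more, whereas the paper iterates the oscillation bound along a nested sequence of closed arcs to pin down a single continuity point in each interval; both finishes are standard and yours is, if anything, tidier.
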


 In the next section we will give a proof of {Theorem 3.3.2} based on a well known elementary theorem.

\section{Proofs}

\begin{proof}[Proof of necessity of {Theorem 3.2.1}.] Suppose that $B(z)$ has radial limits of modulus one everywhere on $T$ and that $E=A'$. Then $E$ is closed. Therefore by {Theorem 3.3.1} and {Theorem 3.3.2} it follows that on a dense subset $M$ of $T$ the function $B(z)$ has continuous boundary values. {Riesz Theorem} implies that each continuous boundary value is of modulus one. Since $A$ is the set of zeros of $B(z)$, no point of $M$ can be in $A'=E$, and the closed set $E$ must be nowhere dense on $T$.
\end{proof}

\begin{proof}[Proof of necessity of {Theorem 3.2.3}.] Suppose that $B(z)$ has radial limits everywhere on $T$ and that $E=A'$. The rest of the proof is identical to the proof above.
\end{proof}

\begin{proof}[Proof of necessity of {Theorem 3.2.2}.]
If $e^{i\theta_0}$ is not in $A'$ then it is well known that $B(z)$ is even analytic in some neighborhood of $e^{i\theta_0}$. Thus if $e^{i\theta_0}$ is discontinuous at $\theta=\theta_0$, then $e^{i\theta_0}\in A'$.\\
 Now let $e^{i\theta_0}$ be in $A'$. We will show that $B(e^{i\theta})$ is discontinuous at $\theta=\theta_0$. Assume for a contradiction that $B(e^{i\theta})$ is continuous at $\theta=\theta_0$. Then, by {Theorem 3.3.1}, $\lim_{z\rightarrow e^{i\theta_0}}B(z)=B(e^{i\theta_0})$ (where $z\rightarrow e^{i\theta_0}$ arbitrarily in $D$). Therefore, by Riesz Theorem we must have that $|B(e^{i\theta_0}|=1$. But since $e^{i\theta_0}\in A'$ we must also have that $B(e^{i\theta_0})=0$, i.e a contradiction. The proof is complete.
\end{proof}
\begin{proof}[Proof of {Theorem 3.2.4}.] This proof is identical to the proof of {Theorem 3.2.2} above.
\end{proof}

\begin{proof}[Proof of {Theorem 3.3.2}.] (We follow the classical approach but apply simplifications that conveniently fit our particular case). We will show that for any closed arc $[a,b]\subset T$ the function $f$ has a continuity point in $(a,b)$. Let $\epsilon>0$ be given. The set $F_{n,m}=\lbrace t\in [a,b]: |f_n(t)-f_{n+m}(t)|\leq\epsilon\rbrace$ is closed and $G_n=\bigcap_{m=1}^\infty F_{n,m}$ is closed. Since $f_n$ converges to $f$ on $[a,b]$, we have $[a,b]=\bigcup_{n=1}^\infty G_n$.\\
 Since $[a,b]$ is not of first category on itself (by a theorem of Baire), there exists $n_0$ and an open interval $I$ $(I\subset T)$ containing points of $[a,b]$ such that $[a,b]\cap I$ contains a point $t_0$ different than $a$ and $b$. Let $J\subset T$ be a closed arc containing $t_0$ in its interior and so small that:
 \begin{enumerate}
\item[i.] $J\subset I$.
\item[ii.] $J\subset (a,b)$.
\item[iii.] The oscillation of the continuous function $f_{n_0}$ is less than $\epsilon$.
\end{enumerate}
 Since also $|f_{n_0}(t)-f(t)|\leq\epsilon$ on $J$, by the triangle inequality it follows the oscillation of $f$ on $J$ is less than $3\epsilon$. This implies that there exists closed arcs $J_n=[c_n,d_n]\subset (a,b)$ such that \begin{enumerate}
\item[(a)] $J_{n+1}=(c_{n+1},d_{n+1})\subset (c_n,d_n)$.
\item[(b)] The length of $J_n$ is less than $1/n$.
\item[(c)] The oscillation of $J_{n+1}$ is less than $1/n$.
\end{enumerate}
Then $f$ is continuous at a point $\tau\in\bigcap_{n=1}^\infty J_n$ and also $\tau\in (a,b)$ and the proof is complete.
\end{proof}

 The following theorem is a simplification of a proof of a theorem by A.J Lohwater and G.Piranian found in \cite{PI}. The proof follows the methodology used to prove {Theorem 2.4.5} while also utilizing the sufficiency part of {Theorem 3.3.1} proven in \cite{PE}.

\begin{thm}[Lohwater and Piranian] Let $K$ be an $F_\sigma$ set of first category in the unit circle $T$. Then there exists a bounded analytic function $\Phi(z)\in H^\infty$ such that the radial limits of $\Phi(z)$ exists everywhere on $T$ and has unrestricted limits exactly on $T\setminus K$.
\end{thm}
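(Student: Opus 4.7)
The plan is to reprise the strategy behind Theorem 2.4.5 (Brown--Gauthier--Hengartner), with the improvement that the Blaschke factors are chosen via the sufficiency part of Colwell's Theorem (Theorem 3.2.1) so as to have radial limits of modulus one at \emph{every} boundary point. First I would write $K = \bigcup_{n=1}^{\infty} F_n$ as a countable union of closed sets, which is possible because $K$ is $F_\sigma$. Each $F_n \subseteq K$ is closed in $T$ and first-category in $T$; a standard application of the Baire Category Theorem to the complete metric space $T$ then forces $F_n$ to have empty interior, so each $F_n$ is closed and nowhere dense. Colwell's Theorem accordingly supplies, for each $n$, a Blaschke product $B_n(z;A_n)$ with accumulation set $A_n' = F_n$ and with $|B_n(e^{i\theta})| = 1$ at every point $e^{i\theta} \in T$.

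I would then set
\[
\Phi(z) \;=\; \sum_{n=1}^{\infty} \frac{B_n(z)}{4^n},
\]
where the rapidly decaying weight $4^{-n}$ is tuned to make the cluster-value estimate below succeed. Since $|B_n| \le 1$ in $D$, the series converges uniformly in $D$, so $\Phi \in H^{\infty}$. Two of the three required properties follow immediately: (i) at every $e^{i\theta} \in T$ the radial limit of $\Phi$ equals $\sum_n B_n(e^{i\theta})/4^n$ by a Weierstrass $M$-test interchange of limit and summation; (ii) at every $\zeta \in T \setminus K$ each $B_n$ extends analytically across $T$ at $\zeta$ (by Theorem 2.1.2, since $\zeta \notin A_n'$), and uniform convergence transfers this to an unrestricted limit for $\Phi$.

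The main obstacle is showing that $\Phi$ fails to possess an unrestricted limit at each $\zeta \in K$; the delicate point is that $\zeta$ may belong to infinitely many $F_n$ simultaneously, so several Blaschke factors are ill-behaved at $\zeta$ at once --- precisely what the coefficient $4^{-n}$ is designed to handle. Let $N$ be the smallest index with $\zeta \in F_N$. Since $\zeta \in A_N'$, pick a sequence $z_k \to \zeta$ in $D$ with $B_N(z_k) \to 0$, and (by a diagonal extraction on the bounded sequences $\{B_n(z_k)\}_k$) arrange further that $B_n(z_k) \to \ell_n$ for every $n$, with $|\ell_n| \le 1$ and $\ell_N = 0$. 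For $n < N$, minimality of $N$ gives $\zeta \notin F_n = A_n'$, so $B_n$ is continuous at $\zeta$ and $\ell_n = B_n(\zeta)$. Comparing the cluster value $L = \sum_n \ell_n/4^n$ with the radial limit $R = \sum_n B_n(\zeta)/4^n$, using $|B_n(\zeta)| = 1$ throughout and the crude bound $|B_n(\zeta) - \ell_n| \le 2$ for $n > N$, one obtains
\[
|R - L| \;\ge\; \frac{1}{4^N} \;-\; \sum_{n > N}\frac{2}{4^n} \;=\; \frac{1}{4^N} - \frac{2}{3 \cdot 4^N} \;=\; \frac{1}{3 \cdot 4^N} \;>\; 0.
\]
Hence $\Phi$ admits two distinct cluster values at $\zeta$ and therefore no unrestricted limit there, completing the plan.
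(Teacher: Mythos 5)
Your proposal is correct and follows the same overall strategy as the paper: write $K=\bigcup_n F_n$ with each $F_n$ closed and nowhere dense, invoke the sufficiency part of Colwell's theorem to get Blaschke products $B_n$ with $A_n'=F_n$ and unimodular radial limits everywhere, and sum them with decaying weights. Where you genuinely improve on the paper's argument is the last step. The paper uses weights $1/i^2$ and simply asserts that the resulting $\Phi$ fails to have unrestricted limits at every point of $K$; this is the one nontrivial claim, since a point $\zeta$ may lie in infinitely many $F_n$ and the bad behavior of the several factors could in principle cancel. Your cluster-value estimate closes that gap, and your choice of geometric weights $4^{-n}$ is essential to it: isolating the minimal index $N$ with $\zeta\in F_N$, you get a discrepancy of size $4^{-N}$ from the $N$-th term (using $|B_N(\zeta)|=1$ radially versus the cluster value $0$ along zeros of $B_N$) against a tail bounded by $\tfrac{2}{3}\cdot 4^{-N}$, which is strictly smaller. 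With the paper's weights $1/i^2$ the analogous tail $\sum_{n>N}2/n^2\sim 2/N$ swamps the main term $1/N^2$, so the same estimate would not go through; one would need a different argument. In short, your route is the paper's route, but made rigorous at the only delicate point, at the modest cost of replacing $1/i^2$ by a summable geometric weight.
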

\begin{proof}
Since $K$ is $F_\sigma$ and of first category, it follows $K=\bigcup_{i=1}^\infty K_i$ where $K_i$ are closed nowhere dense subsets of $C$. Then for each $i$ by the theorem in Colwell's paper mentioned above there exists a Blaschke product $B(z,A_i)$ that has radial limits of modulus $1$ everywhere on $T$ and has unrestricted limits of modulus $1$ exactly on $T\setminus K_i$.\\
Now since for all $z\in D$ $\mid B(z,A_i)\mid<1$, it follows that $\sum_{i=1}^\infty \frac{\mid B(z,A_i)\mid}{i^2}<\frac{\pi^2}{6}$, thus the infinite sum $\sum_{i=1}^\infty(\frac{B(z,A_i)}{i^2})$ converges uniformly to an analytic function $\Phi(z)$ on $D$. Now since each $B(z,A_i)$ has radial limits everywhere on $T$ and unrestricted limits of modulus $1$ exactly on $T\setminus K_i$, it follows that $\Phi(z)$ has radial limits everywhere on $T$ and unrestricted limits exactly on the $G_\delta$ set $T\setminus K$. The proof is complete.
\end{proof}

\section{Further discussions}
\indent \indent In this section our main purpose is to show that the method of the proof of the necessity of {Theorem 3.2.1} (Theorem 2 in \cite{PE}) does not apply for the proof of the necessity part of {Theorem 3.2.3}.\\ \indent For the proof of the necessity of {Theorem 3.2.1} in $\cite{PE}$ the author constructs a polygonal path $P(z)$ inside of $D$ ending at a point of $T$. Taking the limit of $B(z)$ along $P(z)$ the author uses the expression "if it exists"(referring to the limit along the path). However, that limit certainly exists by {Lindel\"{o}f's Theorem}, to which the author \cite{PE} gives a reference.\\
 The final part of the proof of the necessity of Theorem 2 on \cite{PE} perhaps can be slightly modified as follows. Since the radial limits of $B(z)$ exists (at all points of $T$), {Lindel\"{o}f's Theorem} implies that also angular (or non tangential) limits exist. Since the polygonal path $P(z)$ approaches to a point on $T$ non tangentially, the limit of $B(z)$ along $P(z)$ exists. This limit is not zero as it must be of modulus one by assumption. But this is impossible since the zeros of $B(z)$ are located on $P(z)$. This contradiction completes the proof.\\
Since {Theorem 3.2.3} allows radial limits (including zero) one cannot arrive to a similar contradiction for {Theorem 3.2.3}.

\chapter{Arakeljan's Theorem}

\section{Introduction}

\indent In this chapter we will generalize an important theorem in the field of Complex approximation, namely we will generalize \emph{Arakeljan's Theorem}. It is well known by Weirstrass Approximation Theorem that in a real closed interval, any continuous function can be approximated uniformly by polynomials. In the complex case things are different. Since polynomials are analytic, by Morera's Theorem it follows that if $f$ is uniformly approximated by polynomials, then $f$ better be an analytic function. However, $f$ being analytic is not enough to guarantee uniform approximation by polynomials. This is indeed the case if we let $f(z)={1}/{z}$ defined on the annulus $F=\lbrace z\in\mathbb{C}:\frac{1}{2}<z<2\rbrace.$ Even though $f$ is analytic in $F$, in this case $f$ cannot be approximated by polynomials because by Cauchy's theorem if $f$ was approximated by polynomials, the contour integral around the unit circle would have to be equal to zero. However direct computation shows it is equal to $\frac{1}{2\pi}$ instead. Of course this happens because $f$ cannot be analytically continued inside the unit circle, which occurs because $F$ has a bounded connected component(a hole) in its complement.\\
\indent  A special case of \emph{Runge's Theorem} states that any function $f$ that is analytic on a domain $G$ which contains a compact set $F$, can be uniformly approximated by polynomials provided $F$ has a connected complement. Runge's Theorem is a classical theorem and it essentially serves as the starting point of Complex Approximation. The theorem in all of its generality does not require $F$ to have a connected complement, and thus the approximation in general is not by polynomials but rather by rational functions. The theorem also asserts that the poles of those rational functions may be chosen freely to lie anywhere in the respective connected component on $G\setminus F$ they lie at. As a corollary when $G\setminus F$ has a single component the pole may be chosen to be at "$\infty$", thus $f$ may be approximated by polynomials rather than rational functions. Another well known theorem is \emph{Mergelyan's Theorem} which improves Runge's Theorem when $F$ has a connected complement. The theorem states that any function that is continuous on a compact set $F\subset\mathbb{C}$ and analytic on $F^o$ can be uniformly approximated by polynomials on $F$, provided our set $F$ has a connected complement.\\
\indent {Arakaljan's Theorem} on the other hand deals with analytic functions defined in more general sets $F$ that are neither compact nor do they have connected complement. Consequently we don't expect the approximation to be by polynomials. \\
To this end, let $G\subset\mathbb{C}$ be an arbitrary domain and $F\subset G$ a relatively closed subset of $G$. {Arakeljan's Theorem} provides conditions that determine whether a function $f\in A(F)$ can be approximated on $F$ by functions $g\in Hol(G).$ 
\begin{align*}
Hol(G)=\lbrace  g: g\text{ analytic in }G\rbrace \\
 A(F)=\lbrace  f: f\text{ analytic on }F^o,\text{ continuous on F}\rbrace
\end{align*}
\indent The main result in \cite{RU} generalizes Arakeljan's Theorem when $G=\mathbb{C}$. In \cite{RU} the authors consider two closed sets $F$ and $C$ and find necessary and sufficient conditions so that every function defined on $F\subset \mathbb{C}$ can be approximated by entire functions that are bounded on $C$. In this paper we will extend this even further by considering the result for a more general $G$. We will see that if our sets $C$ and $F$ satisfy a certain condition in $G$, or if $G$ is a simply connected then Arakeljan's theorem can be extended.\\
\indent The results in this paper fit in the category of joint approximation. We provide some relevant articles where similar problems have been considered. In particular, articles (\cite{RA}-\cite{AD}) are devoted to various problems regarding bounded approximation by polynomials.
Whereas articles (\cite{Le}-\cite{MU}) are devoted to problems regarding the joint approximation by analytic functions in Banach Spaces.

\section*{Description of Arakelan sets}
 In the proof of {Arakeljan's Theorem} in (\cite{GA}, p. 142-144) the definition used for $F$ to be an Arakeljan set is different than definition we will use in this paper. The definition used in (\cite{GA}, p. 142) is more topological but it turns out that both definitions are equivalent. The definition we will use in this paper is also used in \cite{RU} to prove an extension of Arakeljan's Theorem for the case $G=\mathbb{C}$.\\
The definition we will use for an Arakeljan set in this paper is based on the concept of a $G-$hole. 

\noindent\textbf{Definition 1.}
Let $G\subset \mathbb{C}$ be an arbitrary domain and $F\subset G$ be a relatively closed subset of $G$. A non empty connected component $g$ of $G\setminus F$ is called a $G-$hole of $F$ if it can be enclosed in a compact subset $L\subset G.$
\\

\noindent\textbf{Remark 1.} It is important to note that a $G-$ hole of $F$ is essentially a bounded connected component of the complement (a hole) whose boundary does not intersect with the boundary of $G$.\vspace{0.12in}

\noindent Now that we defined what a hole is in the setting of Arakeljan's theorem on an arbitrary domain, we are ready to define Arakeljan sets.\vspace{0.12in}

\noindent\textbf{Definition 2.} Let $G$ be an arbitrary domain and let $F$ be a relatively closed subset, then we call $F$ an Arakeljan set if: \begin{enumerate}
\item The set $F$ has no $G-$holes.
\item For any connected compact set $K\subset G$ such that $\partial K$ is the union of disjoint jordan curves\footnote{So that locally, the neighborhood around any point on $\partial K$ looks like an open interval.}, the set $H\equiv\lbrace \bigcup \lbrace h\rbrace :h \text{ is a $G$-hole of }F\cup K\rbrace$ can be contained in a compact subset of $G$.
\end{enumerate}

\noindent\textbf{Remark 2.} Clearly since the boundary of $G-$holes does not intersect with the boundary of $G$, it follows that if $H$ has only a finite number of $G-$holes and $F$ has no $G-$holes, then $F$ is an Arakeljan set. Additionally, it follows that if $F$ has no $G-$holes and it fails to be an Arakeljan set, then either $H$ must be unbounded or $\partial H\cap \partial G\not=\emptyset$.\vspace{0.12in}\\
Now, in order to compare our definition with the more topologically flavored definition presented in (\cite{GA}, p. 142), we will need to some tools from topology. To this end let us recall the \emph{Alexandroff compactification} of an arbitrary domain $G\subset \mathbb{C}$. The set $G^*$ is defined by introducing the point $\infty$ so that $G^*=G\cup \lbrace\infty\rbrace$. The topology of $G^*$ is defined to consist from the open sets of $G$ and in addition all the sets that are complements of compact subsets $K\subset G$. Under this topology one may check that $G^*$ is compact.\\
We are ready now to provide the definition of an Araklejan set as presented in (\cite{GA}, p. 142).\vspace{0.12in}
 \pagebreak
 
\noindent\textbf{Definition 3.} The conditions given on the set $F$ to be an Arakeljan set are: \begin{enumerate}
\item$G^*\setminus F$ is connected.
\item$G^*\setminus F$ is locally connected at $\infty$.
\end{enumerate}

\noindent The following proposition introduced in (\cite{GA}, p. 133) provides us with an intuitive approach for determining whether $G^*\setminus F$ is connected.

\begin{prp}
The space $G^*\setminus F$ is connected if and only if each component of the open set $G\setminus F$ has an accumulation point on $\partial G$ or is unbounded.
\end{prp}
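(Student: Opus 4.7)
The plan is to establish the biconditional contrapositively in both directions, reducing everything to a single translation fact about the Alexandroff topology: a subset $A\subset G$ is contained in a compact subset of $G$ if and only if $A$ is bounded and its closure in $\mathbb{C}$ misses $\partial G$. Equivalently, $A$ has an accumulation point on $\partial G$ or is unbounded exactly when $A$ fails to be enclosed in any compact $K\subset G$. Throughout I would lean on the defining property of $G^*$, namely that the neighborhoods of $\infty$ are precisely the sets of the form $\{\infty\}\cup(G\setminus K)$ with $K\subset G$ compact. With this dictionary in hand, the membership statement ``$\infty$ lies in the closure of $A\subset G$ in $G^*$'' becomes literally equivalent to ``$A$ is not contained in any compact subset of $G$''.

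For the implication connected $\Rightarrow$ each component satisfies the alternative, I would argue by contradiction. Assume a component $C$ of $G\setminus F$ is contained in some compact $K\subset G$. Since $G\setminus F$ is open in $\mathbb{C}$, its connected components are open in $G\setminus F$, so $C$ is simultaneously open and closed there. Then $C$ remains open in $G^*\setminus F$, and $G^*\setminus K$ is an open neighborhood of $\infty$ disjoint from $C$, so $C$ is also closed in $G^*\setminus F$. Hence $C$ and its complement in $G^*\setminus F$ (which contains $\infty$) form a proper separation, contradicting connectedness.

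For the converse, I would begin from a separation $G^*\setminus F=A\sqcup B$ with both sets nonempty and relatively clopen, taking $\infty\in B$. Since $A$ is closed in $G^*\setminus F$ and avoids $F$ and $\infty$, its closure in $G^*$ lies in $A\cup F\subset G$, so $\infty\notin\overline{A}^{G^*}$. The defining neighborhood base at $\infty$ then produces a compact $K\subset G$ with $A\subset K$. Pick any $p\in A$ and let $C$ be the component of $G\setminus F$ containing $p$: since $A\sqcup(B\setminus\{\infty\})$ is a decomposition of $G\setminus F$ into disjoint open sets and $C$ is connected, $C$ must lie entirely in $A$, hence $C\subset K$, contradicting the hypothesis that every component has accumulation on $\partial G$ or is unbounded.

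The main obstacle is simply the bookkeeping across three nested topologies (on $\mathbb{C}$, on $G^*$, and on the subspace $G^*\setminus F$); the pitfall is confusing closures in $G$ with closures in $G^*$, or forgetting that a component may fail to be closed when viewed inside $G^*\setminus F$ unless its closure steers clear of $\infty$. Writing both directions around the compact-set characterization above keeps the argument symmetric and lets both implications collapse to one observation about the Alexandroff compactification rather than any deeper topology of plane domains.
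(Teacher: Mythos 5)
Your proof is correct and complete. Note that the thesis itself does not prove this proposition---it simply cites Gaier (p.~133--134)---so there is no internal argument to compare against; what you have written is a self-contained proof of the standard kind one would find in that reference. Your key dictionary (``$A\subset G$ lies in some compact subset of $G$'' $\iff$ ``$A$ is bounded and $\overline{A}\cap\partial G=\emptyset$'' $\iff$ ``$\infty\notin\overline{A}^{\,G^*}$'') is exactly the right reduction, and both directions are handled cleanly: in the forward direction you correctly use local connectedness of $\mathbb{C}$ to get that a component $C$ of the open set $G\setminus F$ is open, and you supply both ingredients needed for $C$ to be closed in $G^*\setminus F$ (closedness of a component in $G\setminus F$ plus the neighborhood $G^*\setminus K$ of $\infty$ missing $C$); in the converse you correctly observe that $\overline{A}^{\,G^*}\subset A\cup F$ forces $\infty\notin\overline{A}^{\,G^*}$ and hence $A\subset K$ for some compact $K\subset G$, and that a component meeting $A$ must lie in $A$ since $A\sqcup(B\setminus\{\infty\})$ is an open partition of $G\setminus F$. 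No gaps.
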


\begin{proof}
The proof can be found in (\cite{GA}, p. 133-134).
\end{proof}

Even though the conditions of {Definition 3} given above seem different from the ones in {Definition 2}, actually they are the same.
\begin{prp}
Definition 2 and Definition 3 describing Arakeljan sets are equivalent.
\end{prp}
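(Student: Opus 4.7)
The plan is to match the two numbered conditions of the two definitions separately. For the first conditions I would apply the Proposition stated immediately above, which says that $G^*\setminus F$ is connected iff each component of $G\setminus F$ is unbounded or has an accumulation point on $\partial G$. A component $c$ of $G\setminus F$ is a $G$-hole precisely when $\overline c$ is a compact subset of $G$, equivalently when $c$ is bounded and has no accumulation point on $\partial G$. Hence the absence of $G$-holes (Definition 2(1)) is equivalent to each component of $G\setminus F$ satisfying the alternative of the preceding Proposition, which in turn is equivalent to connectedness of $G^*\setminus F$ (Definition 3(1)).

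For the second conditions, the strategy is to translate local connectedness of $G^*\setminus F$ at $\infty$ into the statement that for every compact $K\subset G$ the union $H(K)$ of the $G$-holes of $F\cup K$ is contained in a compact subset of $G$. Write $U_K := \{\infty\}\cup(G\setminus(F\cup K))$ for the basic open neighborhoods of $\infty$ in $G^*\setminus F$. A component $c$ of $G\setminus(F\cup K)$ satisfies $\infty\in\overline c^{G^*}$ iff $c$ is not contained in any compact subset of $G$, i.e., iff $c$ is not a $G$-hole of $F\cup K$. Attaching $\infty$ to each such $c$ yields the connected set $c\cup\{\infty\}$, and these sets all share $\infty$, so the connected component of $\infty$ in $U_K$ is exactly $U_K\setminus H(K)$. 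From this identification one argues both directions: if local connectedness at $\infty$ holds, any connected open $V\ni\infty$ contained in $U_K$ is forced into $U_K\setminus H(K)$ and contains some basic neighborhood $U_{K''}$, giving $H(K)\subset K''$ compact in $G$; conversely, if $H(K)\subset L$ for some compact $L\subset G$, set $K' = K\cup L$ and observe that $U_{K'}\setminus H(K')$ is connected and (since by assumption, applied to $K'$, $H(K')$ is also relatively compact in $G$) is open, yielding the required connected open neighborhood of $\infty$ inside $U_K$. Thus Definition 3(2) is equivalent to: for every compact $K\subset G$, $H(K)$ is relatively compact in $G$.

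Definition 2(2) is exactly this assertion restricted to connected compacts $K$ whose boundary is a disjoint union of Jordan curves, so the implication Def 3(2) $\Rightarrow$ Def 2(2) is immediate. For the reverse, given an arbitrary compact $K\subset G$ I would enlarge it to such a ``nice'' compact $K^*\subset G$ with $K\subset K^*$: take a closed $\epsilon$-neighborhood of $K$ inside $G$, connect its finitely many components by polygonal tubes lying in $G$, and perturb slightly so that the boundary becomes a finite disjoint union of Jordan curves. Then each $G$-hole $h$ of $F\cup K$ splits as $h = (h\cap K^*)\cup (h\setminus K^*)$, and each component of $h\setminus K^*$, being relatively compact in $G$ (as a subset of the relatively compact $h$), is itself a $G$-hole of $F\cup K^*$. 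Consequently $H(K)\subset K^*\cup H(K^*)$, so relative compactness of $H(K^*)$ in $G$ forces the same for $H(K)$. The main obstacle is this planar-topology construction of the enlargement $K^*$, which is standard but requires a few lines to carry out carefully; with it in hand, the equivalence of the two definitions is established.
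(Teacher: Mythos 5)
Your argument is correct and follows essentially the same route as the paper: condition (1) of each definition is matched via the preceding Proposition, and condition (2) is handled by identifying the connected component of $\infty$ in $G^*\setminus(F\cup K)$ with the complement of the union of $G$-holes and by enlarging an arbitrary compact $K$ to a connected compact with Jordan-curve boundary using tubes. Your write-up is in fact somewhat more explicit than the paper's at the step where local connectedness at $\infty$ is translated into relative compactness of the set of $G$-holes, which the paper largely asserts.
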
  

\begin{proof}
Indeed, let $G$ be an arbitrary domain and $F$ a relatively closed subset of $G$.
By {Proposition 1}, the set $G^*\setminus F$ is connected if and only if each component of the open set $G\setminus F$ has an accumulation point on $\partial G$ or is unbounded. Therefore, $G\setminus F$ is connected if and if every hole (component of $G\setminus F$) is not a $G-$hole. Therefore, conditions 1 of both definitions are equivalent.
Now we will show that conditions 2 of both definitions are equivalent as well.\\
Indeed, suppose that for any connected compact set $K$ where $\partial K$ is the union of disjoint jordan curves, the set of $G-$holes of $(F\cup K)$ can be contained in a compact subset of $G$. We will show that this implies that $G^*\setminus F$ is locally connected at $\infty$.
Now, recall that $G^*\setminus F$ being locally connected at $\infty$ by definition requires that every open set of $G^*\setminus F$ containing $\infty$ contains a connected sub-neighborhood containing $\infty$. However, every open set in $G^*\setminus F$ is simply the 
complement of a compact set $K\subset G\setminus F$.\\
Suppose for a contradiction that $G^*\setminus F$ is not locally connected. Then there must exist a compact set $K\subset G\setminus F$ such that for all compact subsets $C\subset G$, where $K\subset C$, the set $G^*\setminus (C\cup F)$ is not connected. This of course can only happen when the set of $G-$holes of $C\cup F$ cannot be contained in a compact subset of $G$. Now since $G$ is open and path connected we can enclose each component of $K$ either by a compact set homeomorphic to a disk or by a compact set homeomorphic to a wedge of annuluses. Then since $G$ is path connected we can join the enclosed components by a tube\footnote{If $\gamma$ is a path joining two enclosed components of our compact set, say $A$ and $B$. If we remove a small interval from $\partial A$ and a small interval from $\partial B$, then since $G$ is open we can join $\partial A$ with $\partial B$ via a rectangular path "tube" that follows the path of $\gamma$} to form a set $L\supset K$, such that $\partial L$ is the union of disjoint jordan curves and the set of $G-$holes $F\cup L$ cannot be contained in a compact subset of $G$, i.e a contradiction.\\
The other direction is obvious since for any compact set $K\subset G$ that fails condition 2 of definition 2, the neighborhood $G^*\setminus(F\cup K)\subset G^*\setminus F$ is not locally connected at $\infty$. The proof is complete.
\end{proof}

\noindent Now we are ready to formally state Arakeljan's Theorem.

\begin{thm}[Arakeljan's Theorem]
Let $F$ and $G$ be as above, then any $f\in A(F)$ can be uniformly approximated by functions $g\in Hol(G)$ if and only if $F$ is an Arakeljan set.
\end{thm}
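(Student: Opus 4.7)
The plan is to prove Theorem 4.2.1 by establishing necessity and sufficiency separately. Necessity will be handled by exhibiting explicit test functions in $A(F)$ that cannot be uniformly approximated whenever one of the two conditions of Definition 2 fails. Sufficiency will be proved by an exhaustion argument: cover $G$ by an increasing sequence of compact sets with controlled boundaries, apply Mergelyan's theorem on each stage, and pass to the limit.

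\textbf{Necessity.} For the first condition, suppose $F$ has a $G$-hole $g$. Since $\overline{g}$ is contained in a compact subset of $G$ and $\partial g \subset F$, pick any $z_0 \in g$ and let $f(z) = 1/(z - z_0)$; because $z_0 \notin F$, this $f$ lies in $A(F)$. If some $h \in Hol(G)$ satisfied $|h - f| < \epsilon$ uniformly on $F$, then on $\partial g \subset F$ one would have $|(z - z_0) h(z) - 1| \leq \epsilon M$, with $M = \max_{\zeta \in \partial g} |\zeta - z_0|$. The function $(z - z_0) h(z) - 1$ is holomorphic on $\overline{g} \subset G$, so by the maximum modulus principle this bound holds throughout $\overline{g}$; evaluating at $z_0$ gives $1 \leq \epsilon M$, a contradiction for $\epsilon$ small. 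For the second condition, assume there exists a compact $K \subset G$ with Jordan-curve boundary such that the $G$-holes of $F \cup K$ escape every compact subset of $G$. Picking one point $z_n$ in the $n$-th such hole (where $\{z_n\}$ accumulates at $\partial G$ or leaves $G$ through infinity), I would build a test function of the form $\sum c_n/(z - z_n)$ with rapidly decaying $c_n$ that converges to a function in $A(F)$, and then iterate the maximum-modulus trick in each individual hole to show that any holomorphic approximant on $G$ would be forced to have uncontrolled behavior near $\partial G$, contradicting holomorphy on $G$.

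\textbf{Sufficiency.} Given $f \in A(F)$ and $\epsilon > 0$, the strategy is to exhaust $G$ by compact sets $L_1 \subset L_2 \subset \cdots$ with $L_n \subset \mathrm{int}(L_{n+1})$ and $\bigcup_n L_n = G$, each chosen so that $\partial L_n$ is a finite disjoint union of Jordan curves contained in $G \setminus F$. Using condition 2, I then enlarge each $L_n$ to $\widetilde L_n \subset G$ by adjoining every $G$-hole of $F \cup L_n$; the hypothesis guarantees this union is still compactly contained in $G$, and condition 1 on $F$ ensures that $F \cup \widetilde L_n$ itself has no $G$-holes relative to the ambient neighborhood $\mathrm{int}(\widetilde L_{n+1})$. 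Mergelyan's theorem then applies on each stage to the compact set $(F \cap \widetilde L_n) \cup \widetilde L_{n-1}$, producing $g_n$ holomorphic near $\widetilde L_{n+1}$ with $|g_n - f| < \epsilon 2^{-n}$ on $F \cap \widetilde L_n$ and $|g_n - g_{n-1}| < \epsilon 2^{-n}$ on $\widetilde L_{n-1}$. The telescoping estimates force $g_n$ to converge uniformly on compact subsets of $G$ to a function $g \in Hol(G)$ with $|g - f| < \epsilon$ on all of $F$.

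\textbf{Main obstacle.} The real difficulty lies in the interplay between the topology of $G$ and the two Arakeljan conditions during the construction of the exhaustion. Verifying that condition 2 produces the filled-in compacts $\widetilde L_n$ still compactly contained in $G$ — i.e., that the filling-in of $G$-holes does not push the set against $\partial G$ — requires leveraging the local connectedness of $G^* \setminus F$ at $\infty$ provided by Proposition 4.1.2. A second subtlety is ensuring that at each stage Mergelyan's theorem actually applies, which requires precise control on the complement of $F \cap \widetilde L_n$ inside $\widetilde L_n$; this is where the absence of $G$-holes of $F$ enters. For $G = \mathbb{C}$, treated in \cite{RU}, these issues simplify because a single point at infinity suffices for compactification, whereas the general case demands simultaneous tracking of behavior along the entire boundary $\partial G$, and this is the principal source of technical complication.
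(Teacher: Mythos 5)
The paper does not actually prove this theorem: its ``proof'' consists of citing Arakeljan's original article \cite{AR} and the exposition in (\cite{GA}, p.~142--144), so there is no in-paper argument to compare yours against line by line. Judged on its own terms, your necessity argument for the first condition (the test function $1/(z-z_0)$ in a $G$-hole plus the maximum modulus principle on $\overline{g}\subset G$) is the correct standard argument and is complete. The necessity of the second condition, however, is only gestured at: the series $\sum c_n/(z-z_n)$ with points escaping to $\partial G$ is not constructed, it is not checked that it lies in $A(F)$, and the claimed contradiction ``uncontrolled behavior near $\partial G$'' is not an argument. This direction is genuinely delicate and cannot be waved through.

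The sufficiency sketch has a concrete gap at its central step. To run the induction you need, at stage $n$, a single well-defined function in $A\bigl((F\cap\widetilde L_n)\cup\widetilde L_{n-1}\bigr)$ to feed to an approximation theorem; but your prescriptions ($f$ on $F\cap\widetilde L_n$ and $g_{n-1}$ on $\widetilde L_{n-1}$) disagree on the overlap $F\cap\widetilde L_{n-1}$ by an amount that is small but nonzero, so no such function is specified. Resolving exactly this mismatch --- gluing two approximants that are merely close on a common piece into one approximant on the union, with controlled loss --- is the content of Roth's fusion lemma, which is the technical heart of the proof in \cite{GA} and is absent from your plan. Moreover, Mergelyan's theorem requires the compact set to have connected complement (or, with pole-pushing, finitely many complementary components with poles placed in them), and the sets $(F\cap\widetilde L_n)\cup\widetilde L_{n-1}$ need not satisfy this; the Arakeljan conditions control only where the holes of $F\cup K$ sit, not their number. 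So the exhaustion-plus-telescoping scheme, as written, does not close. Your own ``main obstacle'' paragraph correctly senses that the difficulty lies in the filled-in compacts and the applicability of Mergelyan, but the missing ingredient is the fusion/localization machinery, not just bookkeeping of the exhaustion.
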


\begin{proof}
Arakeljan's Theorem was first stated and proved in \cite{AR} by an Armenian Mathematician named Arakeljan. An additional proof may be found in (\cite{GA}, p. 142-144).
\end{proof}

Our goal now is to move past the restriction $G=\mathbb{C}$ and prove the following:

\begin{thm}[Extension of Arakeljan's Theorem]
Let $G\subset \mathbb{C}$ be an arbitrary domain and suppose $F$ is a relatively closed subset of $G$. Moreover assume that $C\subset G$ is a closed subset in $G$\footnote{Here we mean that $C$ is a closed set in $\mathbb{C}$ that is also a subset of $G$. In particular it is a relatively closed subset whose boundary does not intersect with the boundary of $G$.} such that $C$ and $F$ are $G-$hole independent. In order that every function $f\in A(F)$ can be approximated uniformly on $F$ by functions $g\in Hol(G)$, each of which is bounded on $C$, it is necessary and sufficient that: \begin{enumerate}
\item $F$ is an Arakeljan set.
\item There exists a closed Arakeljan set $C_1\not=G$, so that $C\subset C_1$ and $F\cap C_1$ can be contained in a compact set in $G$.
\end{enumerate}
\end{thm}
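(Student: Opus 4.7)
The plan is to follow the strategy of \cite{RU} for the case $G = \mathbb{C}$ and adapt the key steps to handle a general domain $G$, replacing the classical unbounded complementary components with the intrinsic notion of $G$-holes introduced in \textbf{Definition 1}.

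For necessity, condition $(1)$ is essentially free: if every $f \in A(F)$ admits a uniform approximation on $F$ by functions in $\mathrm{Hol}(G)$ (simply disregard the boundedness on $C$), then {Arakeljan's Theorem} forces $F$ to be an Arakeljan set. For condition $(2)$, I would construct a specific $f \in A(F)$ exhausting $F$ in $G$: pick a compact exhaustion $K_n \nearrow G$ and, using Tietze extension, choose a continuous $f$ on $F$ satisfying $|f| \ge n$ on $F \setminus K_n$. If $g \in \mathrm{Hol}(G)$ is bounded by $M$ on $C$ and within $1$ of $f$ on $F$, then $|f| \le M+1$ on $F \cap C$, forcing $F \cap C$ to lie inside some $K_N$. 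The set $C_1$ is then produced from $C$ by adjoining, in the connected components of $G \setminus (C \cup F)$ that obstruct Arakeljan-ness, thin closed tubes running towards $\partial G$; the $G$-hole independence of $C$ and $F$ is precisely what allows this enlargement to be performed inside $G \setminus F$ without enlarging $F \cap C_1$, while keeping $C_1$ a proper relatively closed Arakeljan subset of $G$.

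For sufficiency, the heart of the argument is to reduce to the original {Arakeljan's Theorem} applied to the enlarged set $\widehat{F} := F \cup C_1$. First, I would define an auxiliary $\tilde{f} \in A(\widehat{F})$ by setting $\tilde{f} = f$ on $F$ and extending it to $C_1$ using a Tietze extension of the bounded continuous function $f|_{F \cap C_1}$ (which is bounded because $F \cap C_1$ is contained in a compact subset of $G$), multiplied by a cutoff vanishing outside a compact neighborhood of $F \cap C_1$ in $C_1$; on the interior of $C_1 \setminus F$ one then modifies $\tilde f$ on each component so that it is analytic (this is possible since $\tilde f$ can be arranged to be zero off a compact region). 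Assuming $\widehat{F}$ is an Arakeljan set, an application of {Arakeljan's Theorem} yields $g \in \mathrm{Hol}(G)$ with $|g - \tilde{f}| < \varepsilon$ on $\widehat{F}$. Restricting to $F$ gives the desired approximation, while on $C \subset C_1$ the triangle inequality yields $|g| \le \sup_{C_1}|\tilde f| + \varepsilon < \infty$.

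The main obstacle is verifying that $\widehat{F} = F \cup C_1$ is an Arakeljan set. The absence of $G$-holes in $\widehat F$ is precisely what the $G$-hole independence hypothesis is engineered to guarantee, combined with the fact that neither $F$ nor $C_1$ individually has $G$-holes. Clause $(2)$ of \textbf{Definition 2} is the subtler point: one must show that for every connected compact $K \subset G$ with disjoint Jordan boundary, the union of $G$-holes of $\widehat F \cup K$ is contained in a compact subset of $G$. My plan is to treat such $G$-holes by enlarging $K$ to $K \cup (C_1 \cap L)$ and $K \cup (F \cap L)$ for a suitable compact $L \subset G$ and invoking clause $(2)$ for $F$ and $C_1$ separately; the hypothesis $C_1 \neq G$ is what prevents the second enlargement from trivially exhausting $G$. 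A parallel difficulty appears on the necessity side in forcing the constructed $C_1$ to be simultaneously Arakeljan, to contain $C$, and to remain a proper subset of $G$ — this will require the compact exhaustion of $G$ to be chosen carefully so that the tubes reaching $\partial G$ do not absorb non-precompact portions of $F$.
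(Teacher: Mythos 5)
Your overall strategy (reduce to Arakeljan's Theorem on an enlarged set, after securing an Arakeljan set $C_1\supset C$) is the right one, but both halves of your argument have concrete gaps that the paper's proof is specifically built to avoid. On the necessity side, the function you build by Tietze extension with $|f|\ge n$ on $F\setminus K_n$ need not lie in $A(F)$: a Tietze extension is merely continuous, and if $F$ has nonempty interior there is no reason it is analytic there, so you cannot feed it to the hypothesis. The paper instead uses the trivially admissible $\phi(z)=z$, obtains $g\in Hol(G)$ with $|z-g(z)|<1$ on $F$ and $g$ bounded on $C$, and then invokes the generalized Lemma (Lemma 3): a function in $Hol(G)$ bounded on a closed $C\subset G$ is bounded on some closed Arakeljan set $C_1\supset C$, constructed as a locally finite union of closed disks on which $|g|<M+1$ together with its $G$-holes, the bound on the holes coming from the maximum principle. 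That lemma is the whole content of condition $(2)$; your replacement --- ``adjoining thin closed tubes running towards $\partial G$'' --- is not a construction: you give no reason the resulting set is Arakeljan, contains $C$, avoids absorbing an unbounded part of $F$, and stays a proper subset of $G$, and you yourself flag this as unresolved.

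On the sufficiency side, working with $\widehat F=F\cup C_1$ directly is the wrong decomposition, and the two difficulties you name are artifacts of that choice. Since $F\cap C_1\ne\emptyset$ in general, the union-of-Arakeljan-sets lemma (which requires the two sets to be \emph{disjoint} and $G$-hole independent) does not apply to $F$ and $C_1$, and your cutoff-times-Tietze extension of $f|_{F\cap C_1}$ to $C_1$ cannot in general be made both continuous, analytic on the interior of $C_1$, and equal to $f$ on $F\cap C_1$ (a cutoff destroys analyticity wherever $F\cap C_1$ has interior). The paper's move is to choose a compact $L\subset G$ with $F\cap C_1\subset L$ and $(\partial L)\setminus C_1\ne\emptyset$, verify that $C_1\setminus L$ is still an Arakeljan set without $G$-holes, and then apply the union lemma to the \emph{disjoint} pair $F$ and $C_1\setminus L$; the auxiliary function is simply $h=\phi$ on $F$ and $h\equiv 0$ on $C_1\setminus L$, which is trivially in $A(F\cup(C_1\setminus L))$, and boundedness on all of $C_1$ (hence on $C$) follows because $C_1\cap L$ is compact. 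Without the excision of $L$ your plan of ``enlarging $K$'' to verify clause $(2)$ of Definition 2 for $\widehat F$ has no evident way to control the $G$-holes created along $F\cap C_1$.
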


 \section*{Main result}
 In \cite{RU} the specific case where $G=\mathbb{C}$ Arakeljan's Theorem has been extended to the following:

\begin{thm}[Extension of Arakeljan's Theorem when $G=\mathbb{C}$]
Let $F$ and $C$ be closed sets in the complex plane, with $C\not=\mathbb{C}$. In order that every function $f\in A(F)$ can be approximated uniformly on $F$ by entire functions, each of which is bounded on $C$, it is necessary and sufficient that: \begin{enumerate}
\item $F$ be an Arakeljan set.
\item There exists an Arakeljan set $C_1$, $C_1\not=\mathbb{C}$, so that $C\subset C_1$, and $F\cap C_1$ is a bounded set in $\mathbb{C}$.
\end{enumerate}
\end{thm}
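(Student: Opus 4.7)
The plan is to prove the two directions separately, with the classical Arakeljan theorem (Theorem 1 of this chapter, the case $G=\mathbb{C}$) as the workhorse in both.

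For necessity, condition (1) is immediate: if every $f\in A(F)$ admits a uniform entire approximation on $F$, then even ignoring the boundedness constraint on $C$, the classical Arakeljan theorem forces $F$ to be Arakeljan. Condition (2) is more delicate and is the heart of the necessity argument. The idea is to exhibit a single function in $A(F)$ whose approximation manufactures the set $C_1$. After reducing to the case where $F$ is unbounded (the bounded case being routine, since one can take $C_1$ to be a large closed disk containing $F$), take the identity $f(z)=z\in A(F)$; by hypothesis there is an entire $g$ with $|g(z)-z|<1$ on $F$ and $|g(z)|\leq M$ on $C$. Then $|g|$ becomes large on the unbounded part of $F$ while remaining bounded on $C$, so for any $M'>M$ the sublevel set $S=\{z\in\mathbb{C}:|g(z)|\leq M'\}$ contains $C$ and meets $F$ in a bounded region. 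The candidate $C_1$ is obtained from $S$ by filling in the bounded complementary components (the $\mathbb{C}$-holes of $S$) and then slightly enlarging the result to restore local connectivity at infinity, so that the final $C_1$ is Arakeljan.

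For sufficiency, given $F$ and $C_1$ satisfying (1) and (2), the plan is to apply the classical Arakeljan theorem not on $F$ alone but on the enlarged set $F\cup C_1$. One first checks that $F\cup C_1$ is itself an Arakeljan set: since both $F$ and $C_1$ have no $\mathbb{C}$-holes, each is locally connected at infinity, and $F\cap C_1$ is bounded, a direct verification against compact sets in the complement shows $F\cup C_1$ inherits the Arakeljan property. Next, define an auxiliary function $\widetilde{f}$ on $F\cup C_1$ by setting $\widetilde{f}=f$ on $F$ and $\widetilde{f}=0$ on the unbounded portion of $C_1\setminus F$, bridged by a smooth cutoff supported in a bounded neighborhood of $F\cap C_1$; then $\widetilde{f}\in A(F\cup C_1)$. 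Applying the classical Arakeljan theorem yields an entire $g$ with $|g-\widetilde{f}|<\varepsilon$ on $F\cup C_1$. On $F$ this gives $|g-f|<\varepsilon$, and on $C\subset C_1$ the function $\widetilde{f}$ is bounded, hence so is $g$.

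The principal obstacle I expect is the construction of $C_1$ in the necessity step. Sublevel sets of entire functions can carry $\mathbb{C}$-holes and can fail to be locally connected at infinity---for instance around zeros of $g$ situated in $C$, or along thin tendrils running out to infinity---so one cannot simply take $C_1=S$. The modification that fills in holes and fattens $S$ to restore local connectivity at infinity must be performed without swallowing any unbounded tail of $F$; this is exactly what ensures $F\cap C_1$ remains bounded. Making this geometric surgery precise, while simultaneously verifying that $F$ does not sneak into the filled-in regions, is the delicate part of the argument and the place where the gap $M'-M$, together with the estimate $|g(z)|\geq|z|-1$ on $F$, will have to do real work.
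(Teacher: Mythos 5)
Your overall strategy is the right one, but both halves have genuine gaps, and in each case the missing ingredient is precisely one of the two lemmas the paper builds this proof on. In the necessity step, the delicate point you flag --- turning the sublevel set $S=\{z:|g(z)|\le M'\}$ into an Arakeljan set by filling holes and ``fattening'' --- is left unresolved, and the sublevel-set route is harder than necessary: $S$ can have infinitely many components and complicated behaviour at infinity, and nothing in your sketch controls local connectedness of its complement at $\infty$. The paper's first lemma (an entire $f$ is bounded on a closed set $C$ if and only if it is bounded on some closed \emph{Arakeljan} set $C_1\supset C$) does this differently: cover $C$ by a locally finite family of closed disks on which $|g|<M+1$ (by continuity), let $E$ be their closed union, and let $C_1$ be $E$ together with its bounded complementary components; the maximum principle keeps $|g|<M+1$ on the filled holes, and the \emph{local finiteness} is what makes the Arakeljan property checkable --- any compact $K$ meets only finitely many of the disks, so $C_1\cup K$ has only finitely many holes. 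The estimate $|g(z)|\ge|z|-1$ on $F$ then gives $F\cap C_1\subset\{z:|z|\le M+2\}$ exactly as you intend. Without some such construction the necessity argument is incomplete.

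The sufficiency step has a more serious flaw: $F\cup C_1$ need \emph{not} be an Arakeljan set under the stated hypotheses, so you cannot apply the classical theorem to it. For example, let $F$ be the upper half of the unit circle together with the ray $[1,+\infty)$, and let $C=C_1$ be the lower half of the unit circle together with the ray $(-\infty,-1]$. Both are Arakeljan sets, $F\cap C_1=\{1,-1\}$ is bounded, yet $F\cup C_1$ contains the whole unit circle and therefore has the open unit disk as a hole. (The union of two Arakeljan sets is Arakeljan only when they are \emph{disjoint} --- this is the paper's second lemma.) Your smooth-cutoff bridge is also problematic wherever $(F\cup C_1)^{o}$ is nonempty, since the bridged function would fail to be analytic there. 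The paper's argument circumvents both issues at once: choose a large compact set $L$ containing $F\cap C_1$ with $(\partial L)\setminus C_1\neq\emptyset$, check that $C_1\setminus L$ is still an Arakeljan set and is disjoint from $F$, apply the disjoint-union lemma to $F\cup(C_1\setminus L)$, and extend $f$ by the constant $0$ on $C_1\setminus L$ --- no bridge is needed, and boundedness of the approximant on the remaining piece $C_1\cap L$ is automatic from compactness. You should restructure the sufficiency argument along these lines.
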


 \noindent The proof of the extended version of Arakeljan's Theorem for $G=\mathbb{C}$ stated above is based on the following two lemmas. 

\begin{lem} Let $f$ be an entire function, and let $C$ a closed subset in $G$, $C\not=G$. In order that $f$ be bounded on $C$ it is necessary and sufficient that there exists a closed Arakeljan set $C_1\subset G$ so that $C\subset C_1$ and $f$ is bounded on $C_1$.
\end{lem}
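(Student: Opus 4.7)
The necessity direction is immediate: if $C\subset C_1$ and $f$ is bounded on $C_1$, then $f$ is bounded on $C$. For sufficiency, I plan to construct $C_1$ as the sublevel set of $|f|$. Specifically, set $M:=\sup_{z\in C}|f(z)|<\infty$ and define
\[C_1 := \{z\in\mathbb{C} : |f(z)|\le M\}.\]
Then $C\subset C_1$ and $|f|\le M$ on $C_1$ by construction. The set $C_1$ is closed as the preimage of $[0,M]$ under the continuous function $|f|$, and when $f$ is non-constant one has $C_1\ne\mathbb{C}$ by Liouville's theorem (the constant case is trivial, since $\mathbb{C}$ is itself Arakeljan). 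The substantive task is to verify that $C_1$ satisfies the two conditions of Definition 2; the maximum modulus principle will be the main tool.

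The first Arakeljan condition (no $\mathbb{C}$-holes) follows immediately: a bounded component $V$ of $\mathbb{C}\setminus C_1=\{|f|>M\}$ would have $\partial V\subset C_1$, hence $|f|\le M$ on $\partial V$, and by max modulus $|f|\le M$ on $V$, contradicting $V\subset\{|f|>M\}$. For the second Arakeljan condition, I would fix a connected compact $K\subset\mathbb{C}$ with Jordan-curve boundary, set $M_K:=\sup_K|f|$, and consider any bounded component $V$ of $\mathbb{C}\setminus(C_1\cup K)$. The same max-modulus argument, now with the bound $M_K$ on $\partial V\cap K$ and $M\le M_K$ on $\partial V\cap C_1$, yields $|f|\le M_K$ on $\overline V$; and if $\partial V$ were disjoint from $K$, then $\partial V\subset C_1$ would reproduce the earlier contradiction, so $\partial V$ must meet $K$. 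Hence every such $V$ is anchored to the compact set $K$ and lies in the sublevel set $\{|f|\le M_K\}$.

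The principal obstacle is the remaining step: showing that the union of all such anchored bounded components is contained in a compact subset of $\mathbb{C}$. My plan is a case analysis. Either $V$ lies inside one of the finitely many bounded holes of $\mathbb{C}\setminus K$ --- in which case $V$ is trapped in a compact region bounded by a Jordan-curve component of $\partial K$ --- or $V$ is an \emph{exterior bay} of $K$ enclosed by an arc of $\partial K$ together with an arc of the real-analytic level set $\{|f|=M\}$. Ruling out accumulation of exterior bays at $\infty$ is the delicate point, and is exactly where the analyticity of $f$ enters essentially: the maximum modulus bound $|f|\le M_K$ on $\overline V$, the finite total length of $\partial K$, and the fact that $\{|f|=M\}$ is a real-analytic $1$-variety along which $f$ is not constant, together prevent infinitely many disjoint bays from extending simultaneously to infinity. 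Once this is done, $\bigcup V$ is bounded, $C_1$ is Arakeljan, and the lemma follows.
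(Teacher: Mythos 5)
Your construction $C_1=\{z:|f(z)|\le M\}$ is a genuinely different route from the paper's, and the first half of it is fine: closedness, $C\subset C_1$, boundedness of $f$ on $C_1$, the absence of holes via the maximum principle, and the observation that every bounded component $V$ of $\mathbb{C}\setminus(C_1\cup K)$ must have boundary meeting $K$ are all correct. (As a side remark, you have the labels reversed: the direction you call ``necessity'' is the sufficiency of the condition, and the substantive direction is what the lemma calls necessity.) But the proof has a genuine gap exactly where you flag it, and the flag does not close it. The maximum--modulus bound $|f|\le M_K$ on $\overline V$ gives no control whatsoever on the diameter of $V$ or on its distance from $K$; a component $V$ need only touch $K$ along an arbitrarily short arc, so the finite length of $\partial K$ does not limit how many components there are or how far they reach; and real-analyticity of $\{|f|=M\}$ is a local statement that says nothing about the global geometry of the tracts of $f$. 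What you must rule out is that a single unbounded component of $\{|f|>M\}$ contains arbitrarily long ``dead-end fingers'' all of whose connections to the rest of the component pass through $K$ --- that is, that $\{|f|>M\}\cup\{\infty\}$ fails to be locally connected at $\infty$. A two-constants (harmonic measure) estimate on such a finger only forces $|f|$ to be barely above $M$ deep inside it; it does not force the finger to be short. So the assertion ``sublevel sets of entire functions are Arakeljan sets'' is a nontrivial claim about the geometry of tracts, and nothing in your sketch proves it; as written the argument does not go through.

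The paper avoids this entirely. It covers $C$ by open disks $U(z)$ on which $|f|<M+1$ (continuity), extracts a \emph{locally finite} subcover $\{U_n\}$, sets $E=\bigcup\overline{U_n}$, and takes $C_1$ to be $E$ together with its holes; the maximum principle gives $|f|<M+1$ on $C_1$, and $C_1$ has no holes by construction. The second Arakeljan condition then comes for free from local finiteness: a compact connected $K$ with Jordan boundary meets only finitely many of the $\overline{U_n}$, so $\partial K$ minus those disks is a finite union of open arcs, each hole of $C_1\cup K$ must contain one of these arcs in its boundary, and distinct holes use distinct arcs --- hence $C_1\cup K$ has only finitely many holes. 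If you want to keep your sublevel-set idea you would need to either prove the local connectivity at $\infty$ of $\{|f|>M\}\cup\{\infty\}$ (a real theorem, not a remark) or replace $C_1$ by something with built-in local finiteness as the paper does.
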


\begin{lem}
The union of two disjoint Arakeljan sets $E$ and $F$ is again an Arakeljan set.
\end{lem}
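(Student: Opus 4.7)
The plan is to verify both conditions of Definition 2 for $E\cup F$, using the equivalent topological characterization from Definition 3 (Proposition 2), together with the following classical fact from plane topology as the main tool: for any bounded open connected set $V\subset\mathbb{C}$, the unbounded component of $\mathbb{C}\setminus\bar V$ has a connected topological boundary $\gamma_V\subset\partial V$, called the outer boundary of $V$. The strategy exploits that a connected subset of the disjoint union of two closed sets must lie entirely in one of them.

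First I would show that $E\cup F$ has no $G$-holes. Suppose for contradiction that $V$ is a $G$-hole of $E\cup F$, so $\bar V\subset G$ is compact and $\partial V\subset E\cup F$. The outer boundary $\gamma_V$ is a connected subset of $E\cup F$, so by disjointness of the closed sets $E,F$ we have $\gamma_V\subset E$ or $\gamma_V\subset F$; say $\gamma_V\subset F$. Since $\gamma_V$ separates $V$ from $\infty$ in $G^*$, the component $\tilde V$ of $G\setminus F$ containing $V$ is enclosed by $\gamma_V$: indeed, the ``exterior'' component of $G\setminus \bar V$ that reaches $\infty$ in $G^*$ meets $\tilde V$ only after crossing $\gamma_V\subset F$, which is forbidden in $G\setminus F$. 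Thus $\tilde V$ has compact closure in $G$ and is itself a $G$-hole of $F$, contradicting Condition 1 of Definition 2 for $F$.

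For the second condition, fix a connected compact $K\subset G$ with Jordan boundary and apply the Arakeljan property of $E$ and $F$ separately to $K$ to obtain compacts $L_E,L_F\subset G$ containing all $G$-holes of $E\cup K$ and $F\cup K$ respectively. For each $G$-hole $V$ of $E\cup F\cup K$, the outer boundary $\gamma_V\subset E\cup F\cup K$ is connected; if $\gamma_V\cap K=\emptyset$ then $\gamma_V$ lies in the disjoint union $E\sqcup F$ and the Step 1 dichotomy places $V$ inside a $G$-hole of $E$ or $F$ (a contradiction); if $\gamma_V\subset E\cup K$ (resp.\ $F\cup K$) then the Step 1 argument places $V$ inside a $G$-hole of $E\cup K$ (resp.\ $F\cup K$), hence in $L_E$ (resp.\ $L_F$). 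The main obstacle is the mixed case, where the closed cover $(E\cup K)\cup(F\cup K)$ has non-empty intersection $K$ and the connected $\gamma_V$ has pieces in both $E\setminus K$ and $F\setminus K$ linked through $K$; the two-closed-set trick no longer forces $\gamma_V$ into one piece. To handle this I plan to iteratively replace $K$ with a connected compact set with Jordan boundary enclosing $K\cup L_E\cup L_F$ (available since $G$ is a domain), then re-apply the Arakeljan conditions of $E$ and $F$ alternately, producing an ascending sequence of compacts that stabilizes to an enlarged compact $K^*$ absorbing the exceptional mixed holes, so every remaining outer boundary is confined to a single piece of the cover and the previous cases apply.
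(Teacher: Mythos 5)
Your Step 1 is sound only when $G=\mathbb{C}$; as written for a general domain $G$ it proves too much. The inference ``$\tilde V$ is enclosed by $\gamma_V$, thus $\tilde V$ has compact closure in $G$'' is exactly where it breaks: enclosure by $\gamma_V$ gives boundedness in $\mathbb{C}$, but the region enclosed by $\gamma_V$ may contain points of $\partial G$, in which case $\tilde V$ accumulates on $\partial G$ and is a strict hole, not a $G$-hole, of $F$. This is not a removable technicality, because the statement itself fails for general $G$ without an extra hypothesis: take $G$ the punctured unit disc and $E$, $F$ the circles of radii $1/2$ and $1/4$; these are disjoint Arakeljan sets in $G$, yet $E\cup F$ has the annulus $1/4<|z|<1/2$ as a $G$-hole (this is Remark 3 in the text, and is precisely why Lemma 4 carries the ``$G$-hole independent'' hypothesis). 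In your notation, $V$ is that annulus, $\gamma_V$ is the circle $|z|=1/2\subset E$, and $\tilde V$ is the punctured disc $0<|z|<1/2$, which does not have compact closure in $G$.

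The more serious gap is in Step 2, and it persists even in the classical setting $G=\mathbb{C}$. You correctly isolate the mixed case --- $\gamma_V$ meeting both $E\setminus K$ and $F\setminus K$, linked through $K$ --- as the obstacle, but the proposed fix (iteratively enlarging $K$ until the sequence of compacts ``stabilizes'') is a plan, not an argument: each enlargement of $K$ can create new $G$-holes of $E\cup K$ and of $F\cup K$, there is no reason the absorbing compacts $L_E$, $L_F$ ever stop growing, and an increasing union of compacts need not be compact. The paper's proof avoids the mixed case altogether by choosing a different connected set than the outer boundary of the hole itself: for each bad hole $h_i$ of $E\cup F\cup K$ it fills in the $G$-holes of $\overline{h_i}\cup K$ to obtain a domain $d_i$ whose boundary splits as an arc of $\partial K$ together with a connected compact set $K_i=\overline{(\partial d_i)\setminus\partial K}$ which is by construction disjoint from $K$ and contained in $E\cup F$. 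Connectedness plus disjointness of the closed sets then force $K_i\subset E$ or $K_i\subset F$ with no mixed case; each $h_i$ therefore sits inside a $G$-hole of $E\cup K$ or of $F\cup K$, and if infinitely many $h_i$ escape every compact set, one of the two families already contradicts the Arakeljan property of $E$ or of $F$ for the single original $K$. To complete your proof you would need to replace the iteration by some such device that produces a connected boundary piece avoiding $K$.
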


 In order to prove the Extension of Arakeljan's Theorem we have to generalize {Lemma 1} and Lemma 2 for a general domain $G$. The reason we did not use the more intuitive definition for Arakeljan sets and $G-$holes (see {Remarks 1 and 2}) is so that we can adjust the proofs presented in \cite{RU} to fit the case of an arbitrary domain $G$.
 
 \begin{lem}
Let $G\subset\mathbb{C}$ be an arbitrary domain and let $f\in Hol(G)$. Suppose $C$ is a closed subset in $G$. In order that $f$ is bounded on $C$ it is necessary and sufficient that there exists a closed Arakeljan set $C_1\subset G$ so that $C\subset C_1$ and $f$ is bounded on $C_1$.
\end{lem}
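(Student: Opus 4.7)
The implication $(\Leftarrow)$ is immediate: if $C_1\subset G$ is a closed Arakeljan set containing $C$ on which $f$ is bounded, then $f$ is bounded on the subset $C$. For $(\Rightarrow)$, set $M:=\sup_{z\in C}|f(z)|$, which is finite by assumption, and take
\[
C_1:=\{z\in G:|f(z)|\leq M\}.
\]
This is closed in $G$ (the preimage of a closed set under the continuous function $|f|$), contains $C$ by hypothesis, and $|f|\leq M$ on $C_1$ by construction, so $f$ is bounded on $C_1$. The remaining work is to verify that $C_1$ satisfies both conditions of Definition 2.

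\textbf{Condition 1 (no $G$-holes).} Suppose for contradiction that $g$ is a $G$-hole of $C_1$, so $\overline{g}$ is compact in $G$. Then $|f|>M$ on $g$ while $|f|\leq M$ on $\partial g\subset C_1$. The maximum modulus principle applied to the analytic function $f$ on $\overline{g}$ forces $\sup_{\overline{g}}|f|=\sup_{\partial g}|f|\leq M$, contradicting $|f|>M$ on $g$.

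\textbf{Condition 2 (main obstacle).} Fix a compact connected $K\subset G$ whose boundary is a disjoint union of Jordan curves, and set $M_K:=\max_K|f|$ and $M':=\max(M,M_K)$. For any $G$-hole $g$ of $C_1\cup K$, the inclusion $\partial g\subset C_1\cup K$ combined with the maximum modulus principle gives $|f|\leq M'$ on $\overline{g}$. Moreover $\partial g\cap K\neq\emptyset$: otherwise $\partial g\subset C_1$ and the argument of Condition 1 would make $g$ a $G$-hole of $C_1$. Thus every such $G$-hole is anchored to $K$ and sits inside the closed set $\widetilde{C_1}:=\{z\in G:|f(z)|\leq M'\}$, which itself has no $G$-holes by the same maximum modulus argument. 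The remaining step --- showing that the union $H:=\bigcup g$ is contained in a compact subset of $G$ --- is the technical heart of the proof. I intend to adapt the topological argument of Lemma 1 in \cite{RU} (case $G=\mathbb{C}$) to the general setting: decompose each $G$-hole according to the component $W$ of $G\setminus K$ containing it; if $W$ is itself a $G$-hole of $K$, then $\overline{W}$ is compact in $G$ and contains $\overline{g}$, while if $\overline{W}$ reaches $\partial G$, the no-$G$-holes property of $\widetilde{C_1}$ together with the anchoring of $\partial g$ to $K\subset\partial W$ forces $g$ to lie within a bounded neighborhood of $K$ in $W$. The main difficulty, compared to the $\mathbb{C}$ case where one can use the polynomial hull of $K$, is producing a topological substitute valid in an arbitrary domain, relying on the compact containment of $K$ in $G$ and the Jordan structure of $\partial K$.
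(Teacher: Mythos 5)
Your reverse implication and your verification of Condition 1 via the maximum modulus principle are fine, but the proof has a genuine gap exactly where you flag it: Condition 2 of Definition 2 is never established for the sublevel set $C_1=\{z\in G:|f(z)|\leq M\}$, and there is no reason to expect it to hold. A sublevel set is a global object over which you have no geometric control; nothing in your argument rules out the possibility that $C_1\cup K$ has infinitely many $G$-holes whose union escapes every compact subset of $G$ (picture $C_1$ containing infinitely many long, thin ``hairpins'' whose openings all accumulate near a single compact $K$ that closes them off simultaneously). The closing sketch about decomposing holes by components of $G\setminus K$ and ``adapting the topological argument of Lemma 1 of \cite{RU}'' does not transfer, because that argument is not about sublevel sets at all. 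There is a second, independent defect: in this paper a ``closed subset in $G$'' means a set closed in $\mathbb{C}$ and contained in $G$ (see the footnote to the Extension of Arakeljan's Theorem), and this is essential in the application of the lemma, where one needs $C_1$ to stay away from $\partial G$ so that the bounded set $F\cap C_1$ can be enclosed in a compact subset of $G$. Your $C_1$ is only relatively closed in $G$ and will in general accumulate on $\partial G$; for instance $G=D$ with $f$ bounded by $M$ on all of $D$ gives $C_1=D$.

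The paper avoids both problems by building $C_1$ locally around $C$ rather than globally from $f$: cover $C$ by open disks $U(z)$ on which $|f|<M+1$ (continuity), with radii chosen so that $\overline{\bigcup_{z\in C}U(z)}\cap\partial G=\emptyset$; pass to a locally finite subcover $\{U_n\}$; set $E=\bigcup_n\overline{U_n}$; and let $C_1$ consist of $E$ together with its $G$-holes, so that $|f|<M+1$ on $C_1$ by the maximum principle and $C_1$ has no $G$-holes by construction. Local finiteness is the key to Condition 2: a connected compact $K$ with Jordan boundary meets only finitely many $\overline{U_n}$, so $(\partial K)\setminus E$ is a finite union of open arcs, every $G$-hole of $C_1\cup K$ must contain one of these arcs in its boundary, and distinct holes use distinct arcs; hence $C_1\cup K$ has only finitely many $G$-holes and Condition 2 follows from Remark 2. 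To salvage your approach you would have to replace the sublevel set by its intersection with a controlled neighborhood of $C$ --- at which point you are essentially rebuilding the paper's construction.
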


\begin{proof}
The proof of sufficiency is obvious so we will only prove necessity. Therefore, lets assume that there exists an $M>0$ such that $|f(z)|<M$ for all $z\in C$, our goal is to show that there is an Arakeljan set $C_1$ containing $C$ such that $f$ is bounded on $C_1$.
 Now since $G$ is open and $C$ is a closed subset in $G$, by continuity of $f$ on $C$, it follows that for each $z\in C$ there exists an open disk $U(z)$ centered at $z$ of radius less than $\delta_z$, so that $|f(t)|<M+1$ for all $t\in U(z)$. Furthermore, we pick $\delta_z< dist(C,\partial G)$, so that: 
 \begin{align*}
 &\overline {\bigcup_{z\in C}U(z)}\cap \partial G=\emptyset. &(*)
 \end{align*}
  The open cover $\lbrace U(z)\rbrace$ , $z\in C$ of the closed set $C$ has a locally finite subcover which we denote by $\lbrace U_n\rbrace$, $n=1,2,3,....$ Hence we have \begin{enumerate}
\item $C\subset \bigcup_{n=1}^\infty U_n$
\item For any compact subset $K$ of $G$ only a finite number of disks $\lbrace U_n\rbrace$ intersect $K$.\\
Write $E=\bigcup_{n=1}^\infty \overline{U_n}$. By 2, $E$ is closed.
\item $|f(z)|<M+1,\   \ z\in E$
\end{enumerate}
Let $C_1$ be the intersection of $G$ with the union of $E$ and its $G-$holes (if any exist). $C_1$ is the union of a locally finite collection of closed sets, thus it is closed and by $(*)$ is a subset of $G$. Furthermore by construction $C_1$ does not have any $G-$holes.
From $(3)$, by the maximum principle, we have $$|f(z)|<M+1, \   \ z\in C_1$$
By $(1)$, we also have $C\subset C_1$. Hence the lemma will be proved when we show $C_1$ is an Arakeljan set. Recall that to show that $C_1$ is Arakeljan set we must show that: \begin{enumerate}
\item[a)] The set $C_1$ has no $G-$holes
\item[b)] For any connected compact set $K\subset G$ the set $H\equiv\lbrace h: h \text{ is a $G-$ hole of }C_1\cup K\rbrace$ can be contained in a compact set $L\subset G$.
\end{enumerate}
Condition $(a)$ is satisfied by construction of the set $C_1$, thus by {Remark 2} it suffices to prove that for any such connected compact set $K\subset G$ such that $\partial K$ is the union of disjoint jordan curves, $C_1\cup K$ has only a finite number of $G-$holes.\\
 To this end, let $K$ be such a compact subset of $G$. By $(2)$ in the sequence $\lbrace \overline{U_n}\rbrace$, there exists only a finite number of disks $\overline{U_{n_1}},\overline{U_{n_2}},....\overline{U_{n_k}}$, each of which intersects $K$. The set $(\partial K)\setminus\bigcup_{m=1}^{m=k}\overline{U_{n_m}}$ is the union of a finite number of disjoint open intervals $I_1,I_2,...,I_p$ on $\partial K$. For any $G-$hole $h$ of $K\cup C_1$, there exists at least one interval $I_k\ \ (1\leq k \leq p)$ so that $I_k\subset \partial h$. (Otherwise $h$ would be a $G-$hole of $C_1$, which is impossible, since $C_1$ is without $G-$holes).\\
 If $h_1,h_2$ are two distinct $G-$holes of $K\cup C_1$ and $I_{k_1}\subset\partial h_1\ \ (1\leq k_1 \leq p), I_{k_2}\subset\partial h_2\ \ (1\leq k_2 \leq p)$, then $k_1\not=k_2$, since otherwise we would have $h_1\cap h_2\not=\emptyset$ which is impossible since $h_1$ and $h_2$ are distinct $G-$holes.\\
 Consequently, the number of $G-$holes of $K\cup C_1$ cannot be more than the number of intervals $I_k$. Thus the number of $G-$holes of $C_1\cup K$ is finite, hence by {Remark 2} it follows that $C_1$ is an Arakeljan set and the proof is complete.
\end{proof}

\noindent In order to state the corresponding analogue of Lemma 2 for the case $G=\mathbb{C}$, we first need to introduce the following definition.\\
\noindent\textbf{Definition 4.} Let $G\subset\mathbb{C}$ be a domain and let $F$ be a relatively closed subset of $G$. Then we call a \emph{strict hole} of $F$ any connected component of $G\setminus F$ (hole) that cannot be contained in a compact subset of $G$ (i.e a hole that is not a $G-$hole).\\
  \noindent \textbf{Definition 5.} Let $E$ and $F$ be disjoint relatively closed subsets of $G$. Then we say that $E$ and $F$ are $G-$hole independent if any non empty pairwise intersection of strict holes of $E$ with strict holes of $F$ is not a $G-$hole. 
\begin{lem}
Let $G$ be an arbitrary open domain in $\mathbb{C}$ and suppose $E$ and $F$ are two disjoint Arakeljan sets in $G$ that are $G-$hole independent. Then the set $E\cup F$ is an Arakeljan set in $G$.
\end{lem}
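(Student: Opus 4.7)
My plan is to verify the two defining properties of Definition 2 for the union $E \cup F$, leveraging the Arakeljan property of $E$ and $F$ individually together with the $G$-hole independence hypothesis.

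First, to verify that $E \cup F$ has no $G$-holes, I argue by contradiction. Suppose $h$ is a $G$-hole of $E \cup F$. Since $h$ is connected and disjoint from $E$, it lies in a unique component $h_E$ of $G \setminus E$, which must be a strict hole because $E$ is Arakeljan; similarly $h \subset h_F$ for a strict hole $h_F$ of $F$. The open set $h_E \cap h_F$ is a subset of $G \setminus (E \cup F)$ containing $h$, so the connected component $h$ of $G \setminus (E \cup F)$ must in fact be a connected component of $h_E \cap h_F$. By the $G$-hole independence hypothesis (Definition 5), no such component can be a $G$-hole, a contradiction.

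Next, for condition 2 I fix a connected compact $K \subset G$ whose boundary is a disjoint union of Jordan curves. Applying the Arakeljan property of $E$ and of $F$ to $K$ separately, I obtain compact sets $M_E, M_F \subset G$ containing every $G$-hole of $E \cup K$ and of $F \cup K$ respectively. Setting $L = K \cup M_E \cup M_F$, a compact subset of $G$, the goal is to show that every $G$-hole of $E \cup F \cup K$ lies in $L$. Given such an $h$, let $g_E$ and $g_F$ denote the components of $G \setminus (E \cup K)$ and $G \setminus (F \cup K)$ that contain $h$. If $g_E$ is a $G$-hole of $E \cup K$ then $h \subset g_E \subset M_E \subset L$, and symmetrically if $g_F$ is a $G$-hole of $F \cup K$; these two sub-cases are immediate from the choices of $M_E$ and $M_F$.

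The main obstacle is the remaining case, when both $g_E$ and $g_F$ are strict holes. Here I would pass to the strict holes $h_E \supset g_E$ of $G \setminus E$ and $h_F \supset g_F$ of $G \setminus F$ and consider the connected component $c$ of $h_E \cap h_F$ containing $h$; by $G$-hole independence, $c$ is not a $G$-hole, yet $h$ is a bounded connected component of $c \setminus K$. The plan is then to argue, in the spirit of the finite-interval counting at the end of the proof of Lemma 3, that $\partial K \cap \bar{c}$ consists of only finitely many Jordan arcs, so only finitely many components of $c \setminus K$ can be enclosed by $K$ inside $c$; each such enclosed component lies in a compact neighborhood of $K$, which can be absorbed into $L$ after first enlarging $K$ to a connected compact $K_1 \supset K \cup M_E \cup M_F$ with Jordan boundary and reapplying the Arakeljan property of $E$ and of $F$ to $K_1$. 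The role of $G$-hole independence is crucial here: it rules out the pathological configuration in which $h_E \cap h_F$ already pre-cuts $G$ into $G$-holes independently of $K$, so that all obstructions to containing $h$ in a compact subset of $G$ must originate from $K$ itself and are hence controllable by the individual Arakeljan hypotheses on $E$ and $F$.
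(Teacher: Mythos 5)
Your first part (showing $E\cup F$ has no $G$-holes) is fine and matches the paper's argument. The problem is in condition 2: you have correctly reduced the lemma to the case where the component $g_E$ of $G\setminus(E\cup K)$ containing $h$ and the component $g_F$ of $G\setminus(F\cup K)$ containing $h$ are both strict, but what you offer for that case is a plan, not a proof. The assertion that ``$\partial K\cap\bar{c}$ consists of only finitely many Jordan arcs, so only finitely many components of $c\setminus K$ can be enclosed by $K$ inside $c$'' is never established (and finiteness of the arcs does not by itself bound the number of enclosed components of $c\setminus K$, since $E$ and $F$ also participate in the boundaries of these components); and the final step of ``enlarging $K$ to $K_1\supset K\cup M_E\cup M_F$ and reapplying'' does not terminate, because the reapplication produces new compact sets $M_E', M_F'$ and the same problematic case can recur. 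The entire difficulty of the lemma is concentrated in exactly this case, so the gap is essential.

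The resolution in the paper is to show that your third case is in fact vacuous: every $G$-hole $h$ of $E\cup F\cup K$ is contained in a $G$-hole of $E\cup K$ or of $F\cup K$, so one of $g_E$, $g_F$ is always a $G$-hole and your sets $M_E, M_F$ already suffice. The mechanism is geometric: form $\overline{h}\cup K$ (connected and compact, since $\partial h$ must meet $K$ by part 1), fill in all of its $G$-holes to get a connected compact set without $G$-holes, and let $d$ be its interior. Because $K$ is connected and $\partial K$ is a disjoint union of Jordan curves, $\partial d$ decomposes as an arc of $\partial K$ together with a \emph{connected} compact set $K_0=\overline{(\partial d)\setminus\partial K}\subset E\cup F$; since $E$ and $F$ are disjoint closed sets, connectedness forces $K_0\subset E$ or $K_0\subset F$, so $\partial d\subset E\cup K$ (say), and the component of $G\setminus(E\cup K)$ containing $h$ is trapped inside the bounded set $d$, i.e.\ it is a $G$-hole of $E\cup K$. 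This connectedness-of-the-outer-boundary argument is the idea missing from your proposal; once you have it, your construction of $L=K\cup M_E\cup M_F$ completes the proof (and in fact gives a direct argument, where the paper phrases the same step as a contradiction with $E$ or $F$ being Arakeljan).
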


\begin{proof}  The set of holes of $E\cup F$ is the following: \begin{align*} 
  G\setminus (E\cup F)&=(G\setminus E) \cup (G\setminus F)\\
            &=\bigcup_{i=1}^\infty E_i\cap \bigcup_{j=1}^\infty F_j\\
            &=\bigcup_{i=1}^\infty G_i.
\end{align*}
Where $E_i$ are the set of holes of $E$, $F_j$ the set of holes of $F$ and $G_i=\bigcup_{j=1}^\infty (E_i\cap F_j).$ Since $E$ and $F$ are Arakeljan sets it follows that none of them has $G-$holes, and since they are $G-$hole independent it follows that indeed $E\cup F$ has no $G-$holes as required. Therefore, it remains to verify the latter condition of definition 2 describing Arakeljan sets. To this end, assume for a contradiction that there exists a connected compact subset $K\subset G$ where $\partial K$ is the union of disjoint jordan curves, and such that $H$, the set of all $G-$ holes of $E\cup F\cup K$ is either unbounded or $\partial H\cap \partial G\not=\emptyset$. Then by {Remark 2} it follows the set $E\cup F\cup K$ has infinite $G-$holes. Consequently, the set $H$ must have infinitely many components i.e $H=\bigcup_{i=1}^\infty \lbrace h_i\rbrace$ where each $h_i$ represents a $G-$hole of $E\cup F\cup K$. Now for fixed $i$, let $a_k^i,\ \ k=1,2,...$ be all the $G-$holes of $\overline{h_i}\cup K$. The connected compact set $\overline{h_i}\cup\bigcup_k \overline{ a_k^i} $ has no $G-$holes. Denote by $d_i$ the interior of this connected compact set which contains $h_i$. Now $\partial d_i$ consists of an open arc on $\partial K$\footnote{Since $\partial K$ is the union of disjoint jordan curves and $K$ is connected} and a connected compact set which we denote by $K_i$. That is, $K_i=\overline{(\partial d_i)\setminus\partial K}.$ Clearly, $K_i\subset E\cup F$. Hence $K_i$ lies completely either on $E$ or in $F$. Let $i_n$,\ \ $n=1,2,...,$ be all the natural numbers for which $K_{i_n}\subset E$ and let $i_l,\ \ l=1,2,...,$ be the remaining natural numbers such that $K_{i_l}\subset F$. Now by our assumption the set $H=\bigcup_{i=1}^\infty \lbrace h_i\rbrace$ is either unbounded or $\partial H\cap \partial G\not=\emptyset$ or both. The case where $H$ is unbounded is already dealt with on \cite{RU}. For the case $\partial H\cap \partial G\not=\emptyset$ we simply note that since $H$ has an accumulation point on $\partial G$, it follows that one or both of the sets $\lbrace h_{i_n}\rbrace $ or $\lbrace h_{i_l}\rbrace$ has an accumulation point on $\partial G$. Without loss of generality let's assume that $\lbrace h_{i_n}\rbrace $ has an accumulation point on $\partial G$. Now let us consider the holes of $\lbrace h_{i_n}\rbrace \cup K$. Since $K_{i_n}\subset E$ and $h_{i_n}$ is a $G-$hole of $E\cup F\cup K$, we see that there exists a $G-$hole $V_{i_n}$ of $E\cup K$ so that $h_{i_n}\subset V_{i_n}\subset d_{i_n}$. Hence the set $\bigcup_{n=1}^\infty\lbrace V_{i_n}\rbrace$ has an accumulation point on $\partial G$, which implies that the set $E$ is not an Arakeljan set. This is a contradiction hence the lemma is proved.
\end{proof}

\noindent \textbf{Remark 3.} Note that in the lemma above we must necessarily require $E$ and $F$ to be $G-$hole independent. Indeed, if we let $G$ to be the punctured unit disc and $E$ and $F$ circles with radii $\frac{1}{2}$ and $\frac{1}{4}$ respectively. Then we can easily verify that $E$ and $F$ are disjoint Arakeljan sets whose union $E\cup F$ has the annuls $A=\lbrace z\in\mathbb{C}: \frac{1}{4}<|z|<\frac{1}{2}\rbrace$ as a $G-$hole. This happens of course because they are not $G-$hole independent.\\

\noindent Now that we have the two lemmas in our arsenal we are ready to prove the Extension of Arakeljan's Theorem.

\begin{thm}[Extension of Arakeljan's Theorem]
Let $G\subset \mathbb{C}$ be an arbitrary domain and suppose $F$ is a relatively closed subset of $G$. Moreover assume that $C\subset G$ is a closed subset in $G$ such that $C$ and $F$ are $G-$hole independent. In order that every function $f\in A(F)$ can be approximated uniformly on $F$ by functions $g\in Hol(G)$, each of which is bounded on $C$, it is necessary and sufficient that: \begin{enumerate}
\item $F$ is an Arakeljan set.
\item There exists a closed Arakeljan set $C_1\not=G$, so that $C\subset C_1$ and $F\cap C_1$ can be contained in a compact set in $G$.
\end{enumerate}
\end{thm}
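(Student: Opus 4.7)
The plan is to adapt the strategy of \cite{RU} for the case $G=\mathbb{C}$ to the arbitrary domain setting, replacing the two lemmas from \cite{RU} by their generalizations ({Lemma 1} and {Lemma 2}) proved above.

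For necessity, Condition 1 is immediate: if every $f\in A(F)$ can be uniformly approximated on $F$ by elements of $Hol(G)$, then {Arakeljan's Theorem} forces $F$ to be an Arakeljan set. For Condition 2, the idea is to exhibit a single ``test function'' $f_0\in A(F)$ whose approximation pins down the required structure. I would first use a compact exhaustion $\{K_n\}$ of $G$ together with {Arakeljan's Theorem} on the Arakeljan set $F$ to produce $f_0\in A(F)$ that is unbounded on every relatively closed subset of $F$ not contained in any compact subset of $G$. By hypothesis there exists $g\in Hol(G)$, bounded on $C$, with $|g-f_0|<1$ on $F$. Applying the generalized {Lemma 1} to $g$ yields a closed Arakeljan set $C_1\subset G$ (necessarily $C_1\ne G$, since $f_0$ is unbounded on $F$ and hence $g$ cannot be bounded on all of $G$) with $C\subset C_1$ and $g$ bounded on $C_1$. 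Since $|f_0|\le|g|+1$ on $F\cap C_1$, the growth property of $f_0$ forces $F\cap C_1$ to lie in a compact subset of $G$, establishing Condition 2.

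For sufficiency, fix $f\in A(F)$ and $\varepsilon>0$. Since $F\cap C_1$ is contained in a compact set $K\subset G$, I would thicken $K$ to a connected compact set $L\subset G$ with $K\subset \mathrm{int}(L)$ and $\partial L$ a finite disjoint union of Jordan curves; such an $L$ exists because $G$ is open and path-connected. Set $C_1'=C_1\setminus \mathrm{int}(L)$. This is relatively closed in $G$, disjoint from $F$, and inherits the Arakeljan property from $C_1$: removing only the interior of a compact subset of $G$ cannot create new $G$-holes, and the second condition in {Definition 2} for $C_1'$ reduces to that for $C_1$. Combining the hypothesized $G$-hole independence of $C$ and $F$ with $C\subset C_1'\cup L$ and the Jordan-curve nature of $\partial L$, one then verifies that $F$ and $C_1'$ are themselves $G$-hole independent, so {Lemma 2} yields that $F\cup C_1'$ is an Arakeljan set in $G$.

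Define $\tilde f$ on $F\cup C_1'$ by $\tilde f|_F=f$ and $\tilde f|_{C_1'}=0$. Since $F$ and $C_1'$ are disjoint relatively closed subsets of $G$, one has $\tilde f\in A(F\cup C_1')$. Applying {Arakeljan's Theorem} to $F\cup C_1'$ produces $g\in Hol(G)$ with $|g-\tilde f|<\varepsilon$ on $F\cup C_1'$; then $g$ approximates $f$ within $\varepsilon$ on $F$, while $|g|<\varepsilon$ on $C_1'$. Since $C\cap L$ is compact and $g$ continuous, $g$ is bounded on $C\cap L$, and hence on $C=(C\cap L)\cup(C\setminus\mathrm{int}(L))\subset(C\cap L)\cup C_1'$. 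The principal obstacle I anticipate is the careful verification of the two structural properties of $C_1'$: that carving out $\mathrm{int}(L)$ preserves the Arakeljan property, and that the hypothesized $G$-hole independence of $C$ and $F$ (which is a hypothesis only on $C$, not on $C_1$) genuinely transfers to $F$ and $C_1'$. A secondary subtlety is the explicit exhaustion construction of $f_0$ in the necessity argument on a general domain, where the coordinate function $|z|$ is not available and unboundedness must instead be forced by pasting together Arakeljan approximations along a compact exhaustion of $G$.
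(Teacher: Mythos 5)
Your sufficiency argument is essentially the paper's: pick a compact $L\supset F\cap C_1$ with nice boundary, pass to $C_1\setminus L$, verify it is an Arakeljan set without $G$-holes disjoint from $F$, invoke the union lemma (Lemma 4) to make $F\cup(C_1\setminus L)$ Arakeljan, approximate the function equal to $f$ on $F$ and $0$ on $C_1\setminus L$, and recover boundedness on $C$ from boundedness on the compact piece $C_1\cap L$. The divergence is in necessity. The paper does not build any special test function by a compact exhaustion: it simply approximates $\phi(z)=z$, which \emph{is} available in $Hol(G)$ for every domain $G\subset\mathbb{C}$ (your stated obstacle --- that the coordinate function is unavailable on a general domain --- is not real). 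From $|z-g(z)|<1$ on $F$ and $g$ bounded on the set $C_1$ produced by Lemma 3, one gets that $F\cap C_1$ is bounded in $\mathbb{C}$; the upgrade from ``bounded'' to ``contained in a compact subset of $G$'' comes not from any growth property of the test function but from the fact that $C_1$ is closed in $\mathbb{C}$ and contained in $G$ (so a bounded closed subset of it cannot accumulate on $\partial G$). Your alternative --- an $f_0$ unbounded on every relatively closed subset of $F$ not compactly contained in $G$ --- would also yield the conclusion, but its existence is exactly the kind of nontrivial gluing-along-an-exhaustion construction you flag as a ``secondary subtlety,'' and you do not carry it out; since the elementary choice $\phi(z)=z$ already works, that extra machinery is an unnecessary complication rather than an error. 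One genuine merit of your write-up is that you isolate explicitly the two points the paper passes over quickly: that deleting $L$ preserves the Arakeljan property of $C_1$, and that the $G$-hole independence hypothesis on the pair $(C,F)$ must be propagated to the pair $(F, C_1\setminus L)$ before the union lemma applies --- the paper disposes of the latter with a ``without loss of generality,'' and a careful reader will want the verification you promise.
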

\begin{proof}[Proof of Extension of Arakeljan's Theorem]

\textbf{(Necessity.)} Assume that any $f\in A(F)$ can be approximated uniformly by functions $g\in Hol(G)$, each of which is bounded on $C$. By Arakeljan's Theorem it follows $F$ is an Arakeljan set. Clearly the function $\phi(z)=z$ belongs to the class $Hol(G)$, thus there exists a function $f\in Hol(G)$ such that: \begin{equation}|z-f(z)|<1,\   \ z\in F  
\end{equation}and $f$ is also bounded on C.
 By {lemma 1} there exists a closed Arakeljan set $C_1\subset \mathbb{C}$ such that that $C\subset C_1$ and $f(z)$ is bounded on $C_1$. By $(1)$ clearly $F\cap C_1$ is bounded. Since $C_1$ is a closed subset inside $G$, it follows that it stays away from the boundary $\partial G$. Therefore, since $F\cap C_1$ is bounded it follows that it can be contained in a compact set $L\subset G$. The proof of necessity is complete.\\ 
 \noindent \textbf{(Sufficiency.)} We have Arakeljan sets $F$ and $C_1$, $C\subset C_1$, such that $F\cap C_1$ is a bounded set. Let $L$ be a compact set in $G$ such that $\partial K$ is the union of disjoint jordan curves and so large that:
 \begin{equation} 
  F\cap C_1\subset L\footnote{Such an L exists because $C_1$ is a closed subset in $G$. That is the reason we did not require $C_1$ to simply be a relatively closed subset of $G$ in the statement of the Theorem.}
   \end{equation}
  \begin{equation}
  (\partial L)\setminus C_1\not=\emptyset
   \end{equation}

\noindent Note that $(3)$ is possible since $C_1\not=G$ is a closed set without $G-$holes.\\
 The set $C_1\setminus L$ is also without $G-$holes, because otherwise there would exist a $G-$hole $g$ of $C_1\setminus L$. This leads to a contradiction since we must have $\partial g\subset C_1$. However, since $C_1$ has no $G-$holes it follows that $g=L$ and this contradicts $(3)$.\\
 Moreover, since by assumption of the theorem  $C$ and $F$ are $G-$hole independent, without loss of generality we may assume that the set $F\cup (C_1\setminus L)$ is without $G-$holes.
The set $C_1\setminus L$ is also an Arakeljan set. Otherwise, there would exist a compact set $K\subset G$ so that the set $H\equiv \lbrace h: h \text{ is a }G-\text{hole of }K\cup(C_1\setminus L)\rbrace$ is either unbounded or $\partial H\cap \partial G\not=\emptyset$. But this is a contradiction because then the set of $G-$holes of the set $C_1\setminus L$ union the compact set $K\cup L$ would also have to either be unbounded or accumulate to a point in $\partial G$. Indeed, the claim follows, since $(K\cup L)\cup (C_1\setminus L)=C_1\cup K$. This contradicts that $C_1$ is an Arakeljan set, as such, by the contradiction we conclude $C_1\setminus L$ is an Arakeljan set.\\
 Now the Arakeljan sets $F$ and $C_1\setminus L$ are disjoint by $(2)$, therefore by the lemma 4 it follows that $F\cup (C_1\setminus L)$ is an Arakeljan set. \\
 Let $\phi(z)\in A(F)$ be any function. Define a function $h(z)$ by $h(z)=\phi(z)$ on $F$ and $h(z)=0$ on $C_1\setminus L$. Clearly by $(2)$ it is evident that $h(z)\in A(F\cup (C_1\setminus L))$. Therefore by Arakeljan's theorem $h(z)$ is uniformly approximable by functions in $A(G)$. Therefore for any $\epsilon>0$, there exists a function $f\in A(G)$ such that $|h(z)-f(z)|<\epsilon$ for any $z\in F\cup (C_1\setminus L)$. Hence we have that $|\phi(z)-f(z)|<\epsilon$ on $F$ and $|f(z)|<\epsilon$ on $C_1\setminus L$. The function $f(z)$ is bounded on $C_1\setminus L$, moreover $f(z)$ is bounded on the compact set $C_1\cap L$ thus, $f(z)$ uniformly approximates $\phi(z)$ on $F$ and is bounded on $C$. The proof is complete.
\end{proof}

\noindent The following lemma is proven in [\cite{FU}, p2.65] and replaces lemma 4 for the case $G$ is simply connected.
\begin{lem}
Let $G\subset\mathbb{C}$ be a simply connected open set $\lbrace F_n\rbrace_{n=1}^\infty$ is a locally finite family of pairwise disjoint Arakeljan sets in $G$, then the union $\bigcup_{n=1}^\infty F_n$ is also an Arakeljan set in $G$.
\end{lem}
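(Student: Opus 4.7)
The plan is to prove the lemma in two stages: first, use local finiteness to reduce from the countable union to finite sub-families; second, handle the finite case by induction, reducing eventually to the base case of two disjoint Arakeljan sets in a simply connected domain.

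Local finiteness immediately makes $F:=\bigcup_n F_n$ relatively closed in $G$. For the no-$G$-holes condition, suppose toward a contradiction that $h$ is a $G$-hole of $F$. By local finiteness I can find a compact set $L\subset G$ with $\overline h$ in its interior such that $L$ meets only finitely many of the $F_n$, say $F_{i_1},\ldots,F_{i_k}$. I claim $h$ is then a $G$-hole of the finite union $F_{i_1}\cup\cdots\cup F_{i_k}$: inside $L$ we have $F\cap L=(F_{i_1}\cup\cdots\cup F_{i_k})\cap L$, so any path in the component of $G\setminus(F_{i_1}\cup\cdots\cup F_{i_k})$ containing $h$ that tries to leave $\overline h$ must, until it crosses $\partial L$, avoid all of $F$ and therefore remain in the component $h$ of $G\setminus F$; this contradicts $\overline h\subset\mathrm{int}\,L$ at the moment of crossing $\partial L$. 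Thus, if the finite case is known, this gives a contradiction. The second Arakeljan condition reduces analogously, since any compact connected $K\subset G$ itself meets only finitely many $F_n$.

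Induction on the number of sets then reduces the finite case to the base case: if $E$ and $F$ are two disjoint Arakeljan sets in the simply connected $G$, then $E\cup F$ is Arakeljan. By Lemma 4 of the excerpt this follows once I verify that $E$ and $F$ are $G$-hole independent, so the lemma ultimately rests on the key claim that in a simply connected $G$ any two disjoint Arakeljan sets are automatically $G$-hole independent. By the Riemann Mapping Theorem I may assume $G=\mathbb{D}$ (the case $G=\mathbb{C}$ is analogous). Suppose toward a contradiction that a component $h$ of $U\cap V$ is a $G$-hole, where $U$ and $V$ are strict holes of $E$ and $F$. Let $\Omega$ be the component of $\mathbb{D}\setminus\overline h$ that accumulates on $\partial\mathbb{D}$, and set $\tilde h:=\mathbb{D}\setminus\Omega$, a compact connected subset of $\mathbb{D}$. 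The outer boundary $K:=\partial\Omega\cap\mathbb{D}$ is a continuum contained in $\partial h\subset E\cup F$; since $E$ and $F$ are disjoint closed sets and $K$ is connected, $K$ lies entirely in one of them, say $K\subset E$. Then $U$, being a connected subset of $\mathbb{D}\setminus E$ that contains $h$, cannot cross $K\subset E$, so $U\subset\tilde h$, and hence $\overline U$ is compact in $\mathbb{D}$ --- contradicting that $U$ is a strict hole of the Arakeljan set $E$.

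The main obstacle is the plane-topology input that $K=\partial\Omega\cap\mathbb{D}$ is a continuum, i.e.\ the classical fact that the outer boundary of a compact connected set in a simply connected planar domain is connected (see, e.g., Newman's \textit{Topology of Plane Sets}); this is where simple connectedness of $G$ enters in an essential way, and precisely what fails in Remark 3's counterexample on the punctured disk. Granting this topological input, the contradiction above establishes the base case, the induction handles all finite sub-families, and the local-finiteness reduction completes the proof.
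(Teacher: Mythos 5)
The paper does not actually prove this lemma; it only cites Fournodavlos \cite{FU}, so a self-contained argument is a genuine addition. Your base case is correct and is the nicest part of the write-up: reducing $G$-hole independence of two disjoint Arakeljan sets in a simply connected $G$ to the fact that the outer boundary $\partial\tilde h$ of the filled hole is a continuum, which must then lie entirely in $E$ or entirely in $F$ and hence traps the corresponding strict hole inside $\tilde h$, is sound, and it is exactly the point where the counterexample of Remark 3 on the punctured disk is blocked. The reduction of the first Arakeljan condition (no $G$-holes) to finite subfamilies via local finiteness is also correct.

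The gap is the sentence ``The second Arakeljan condition reduces analogously, since any compact connected $K\subset G$ itself meets only finitely many $F_n$.'' It does not reduce analogously. For condition 1 you hold a single precompact hole $h$, choose a compact $L\supset\overline h$ in advance, and only the finitely many $F_n$ meeting $L$ matter. For condition 2 the object to be controlled is the union of \emph{all} $G$-holes of $F\cup K$, and that union is precisely what you are trying to prove is contained in a compact subset of $G$, so no such $L$ is available in advance; moreover a single $G$-hole of $F\cup K$ need only touch $K$ somewhere, while its closure can extend far from $K$ and have boundary points on sets $F_n$ that do not meet $K$ at all. Knowing that each individual hole is a hole of some finite subunion (depending on that hole) gives each hole its own compact container but no uniform one, so the finite case cannot be invoked as stated. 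The step is repairable with the very tool you use in the base case: for a $G$-hole $h$ of $F\cup K$, the continuum $\partial\tilde h$ lies in the disjoint union of $A:=K\cup\bigcup\lbrace F_n:F_n\cap K\ne\emptyset\rbrace$ and $B:=\bigcup\lbrace F_n:F_n\cap K=\emptyset\rbrace$, hence lies entirely in $A$ or entirely in $B$; in the latter case it lies in a single $F_m$ and forces $F_m$ to have a $G$-hole, a contradiction; in the former case the component of $G\setminus A$ containing $h$ is trapped inside $\tilde h$, so every $G$-hole of $F\cup K$ sits inside a $G$-hole of the \emph{fixed} finite union $A$, and the finite case then supplies the required compact set. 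As written, however, this key reduction is asserted rather than proved, and the analogy offered in its support is false.
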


\begin{cor}[Extension of Arakeljan's Theorem when $G$ is simply connected]
Let $G\subset \mathbb{C}$ be an arbitrary simply connected domain and suppose $F$ is a relatively closed subset of $G$. Moreover assume that $C\subset G$ is a closed subset in $G$. In order that every function $f\in A(F)$ can be approximated uniformly on $F$ by functions $g\in Hol(G)$, each of which is bounded on $C$, it is necessary and sufficient that: \begin{enumerate}
\item $F$ is an Arakeljan set.
\item There exists a closed Arakeljan set $C_1\not=G$, so that $C\subset C_1$ and $F\cap C_1$ can be contained in a compact subset $L\subset G$.
\end{enumerate}
\end{cor}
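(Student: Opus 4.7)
The plan is to mirror the proof of the preceding Extension of Arakeljan's Theorem, replacing the use of Lemma 4 with Lemma 5. Since Lemma 5, in the simply connected setting, requires no $G$-hole independence hypothesis, that hypothesis on $C$ and $F$ can be dropped entirely from the statement. In fact, structurally the argument is almost a verbatim copy of the previous theorem, except at the single step where two Arakeljan sets must be glued.

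For necessity, I would proceed exactly as in Theorem 2. The identity $\phi(z)=z$ belongs to $Hol(G)$, so the hypothesis furnishes some $f\in Hol(G)$, bounded on $C$, with $|z-f(z)|<1$ on $F$. Applying Lemma 3 produces a closed Arakeljan set $C_1\subset G$ with $C\subset C_1\ne G$ on which $f$ is bounded. The inequality $|z-f(z)|<1$ then forces $F\cap C_1$ to be bounded in $\mathbb{C}$, and since $C_1$ is closed in $\mathbb{C}$ and a subset of $G$ it stays at positive distance from $\partial G$; consequently $F\cap C_1$ fits inside a compact subset of $G$. That $F$ itself is an Arakeljan set is immediate from the classical Arakeljan Theorem.

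For sufficiency, I would pick a compact connected $L\subset G$ whose boundary is a finite union of disjoint Jordan curves, large enough that $F\cap C_1\subset L$ and $(\partial L)\setminus C_1\ne\emptyset$; such an $L$ exists because $C_1\ne G$ is closed in $\mathbb{C}$ and has no $G$-holes. Exactly as in Theorem 2, the set $C_1\setminus L$ is then Arakeljan, since any purported $G$-hole of $C_1\setminus L$ would combine with $L$ to yield a $G$-hole of $C_1$, which is impossible. The two sets $F$ and $C_1\setminus L$ are disjoint (as $F\cap C_1\subset L$), and the two-element family $\{F,\,C_1\setminus L\}$ is trivially locally finite. This is the precise point at which simple connectivity of $G$ enters the argument: Lemma 5 now applies and yields that $F\cup(C_1\setminus L)$ is an Arakeljan set. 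Given $\phi\in A(F)$, define $h$ by $h=\phi$ on $F$ and $h=0$ on $C_1\setminus L$; then $h\in A(F\cup(C_1\setminus L))$, so the classical Arakeljan Theorem produces $f\in Hol(G)$ with $|h-f|<\varepsilon$ on $F\cup(C_1\setminus L)$. This $f$ approximates $\phi$ within $\varepsilon$ on $F$, is bounded by $\varepsilon$ on $C_1\setminus L$, and is automatically bounded on the compact set $C_1\cap L$; hence it is bounded on all of $C_1\supset C$.

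The main obstacle is confirming that Lemma 5 really can substitute for Lemma 4 without smuggling in a $G$-hole independence assumption through the back door. In Theorem 2, the independence hypothesis was invoked precisely to guarantee that $F\cup(C_1\setminus L)$ was free of $G$-holes, whereas here that role is discharged entirely by the simple connectivity of $G$ via Lemma 5. Once one has verified that $C_1\setminus L$ is Arakeljan, that $F$ and $C_1\setminus L$ are disjoint, and that Lemma 5 applies to a two-element locally finite family, no further surgery is required and the proof closes along the same lines as Theorem 2.
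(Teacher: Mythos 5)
Your proposal is correct and follows exactly the route the paper takes: the paper's proof of this corollary is literally ``identical to the one above where of course we use lemma 5 instead of lemma 4,'' and you have simply written out that substitution in detail, correctly identifying that simple connectivity (via Lemma 5) replaces the $G$-hole independence hypothesis at the single gluing step.
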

\begin{proof}
The proof is identical to the one above where of course we use lemma 5 instead of lemma 4. 
\end{proof}

\noindent To conclude we ask the following open question that occurs naturally.
Does the Extension of Arakeljan's Theorem still hold if we require that the approximating functions to be uniformly bounded on $C$? This question remains unanswered even for the case $G=\mathbb{C}$

\pagebreak

\end{document}